\theoremstyle{definition}
\newtheorem*{lem}{Lemma}
\newtheorem*{prop}{Proposition}
\newtheorem*{thm}{Theorem}
\theoremstyle{definition}
\newtheorem*{defn}{Definition}
\newtheorem*{conj}{Conjecture}
\newtheorem*{example}{Example}
\newtheorem*{rem}{Remark}
\newtheorem*{theom}{Theorem}
\newcommand{\nc}{\newcommand}
\nc{\ch}{\text{ch}}
\newcounter{cnt}
\numberwithin{equation}{section}
\def\section{\def\@secnumfont{\mdseries}\@startsection{section}{1}%
  \z@{.7\linespacing\@plus\linespacing}{.5\linespacing}%
  {\normalfont\scshape\centering}}
\def\subsection{\def\@secnumfont{\bfseries}\@startsection{subsection}{2}%
  {\parindent}{.5\linespacing\@plus.7\linespacing}{-.5em}%
  {\normalfont\bfseries}}
\def\mydggeometry{\makeatletter\dg@YGRID=1\dg@XGRID=20\unitlength=0.003pt\makeatother}
\makeatother \theoremstyle{remark}
\newcommand\al{\alpha}
\newcommand\Lg{\mathfrak{g}}
\newcommand{\bp}{{\bf p}}
\newcommand{\bs}{{\bf s}}
\newcommand{\bt}{{\bf t}}
\newcommand{\br}{{\bf r}}
\newcommand{\bq}{{\bf q}}
\newcommand{\Z}{\mathbb Z }
\newcommand{\tC}{\mathbb C }
\newcommand{\N}{\mathbb N }
\newcommand{\R}{\mathbb R }
\nc\bu{\mathbf U}
\newcommand{\lie}[1]{\mathfrak{#1}}
\newcommand{\wt}{\operatorname{wt}}
\DeclareMathOperator{\typ}{type}
\DeclareMathOperator{\gr}{gr}
\DeclareMathOperator{\spa}{span}
\DeclareMathOperator{\cha}{ch}
\DeclareMathOperator{\supp}{supp}
\DeclareMathOperator{\X}{X}
\DeclareMathOperator{\Ra}{R}
\DeclareMathOperator{\T}{T}
\begin{document}

\title[The PBW filtration and convex polytopes in type $\tt B$]{The PBW filtration and convex polytopes in type $\tt B$}
\author{Teodor Backhaus}
\address{Teodor Backhaus:\newline
Mathematisches Institut, Universit\"at zu K\"oln, Germany}
\email{tbackha@math.uni-koeln.de}
\thanks{T.B. was funded by the DFG Priority Program SPP 1388 “Representation theory”.}
\author{Deniz Kus}
\address{Deniz Kus:\newline
Mathematisches Institut, Universit\"at zu K\"oln, Germany}
\email{dkus@math.uni-koeln.de}
\thanks{D.K. was partially supported by the “SFB/TR 12-Symmetries and
Universality in Mesoscopic Systems”.}
\date{}

\subjclass[2010]{}
\begin{abstract}
We study the PBW filtration on irreducible finite--dimensional representations for the Lie algebra of type $\tt B_n$. We prove in various cases, including all multiples of the adjoint representation and all irreducible finite--dimensional representations for $\tt B_3$, that there exists a normal polytope such that the lattice points of this polytope parametrize a basis of the corresponding associated graded space. As a consequence we obtain several classes of examples for favourable modules and graded combinatorial character formulas.
\end{abstract}

\maketitle \thispagestyle{empty}
%%%%%%%%%%%%%%%%%%%%%%%%%%%%%%%%%%%%%%%%%%%%%%%%%%%%%%%%%%%%%%%%%%%%%%%%%%%%%%%%%%%%%%%%%%%%%%%%%%%%%%%%%%%%%%%%%%%%%%%%%%%%%%%%%%%
%         Introduction
%%%%%%%%%%%%%%%%%%%%%%%%%%%%%%%%%%%%%%%%%%%%%%%%%%%%%%%%%%%%%%%%%%%%%%%%%%%%%%%%%%%%%%%%%%%%%%%%%%%%%%%%%%%%%%%%%%%%%%%%%%%%%%%%%%%
\section{Introduction}\label{section1}
Let $\Lg$ be a complex finite--dimensional simple Lie algebra with highest root $\theta$. The PBW filtration on finite--dimensional irreducible representations of $\Lg$ was studied in \cite{FFL11} and a description of the associated graded space in terms of generators and relations has been given in type $\tt A_n$ and $\tt C_n$ (see \cite{FFL11,FFL2011}). As a beautiful consequence the authors obtained a new class of bases parametrized by the lattice points of normal polytopes, which we call the FFL polytopes. A new class of bases for type $\tt G_2$ is established in \cite{G11} by using different arguments.\par
%The PBW filtration on highest weight representations of affine Kac--Moody algebras was introduced in \cite{FFJMT08} as a tool for studying vertex operators acting on the space of Virasoro minimal models. In the same paper the authors conjectured generators and relations for the associated graded space of the vacuum representation and a proof in the case $\mathfrak{sl}_2$ was provided in \cite{F2009}.\par%On of the motivations of this paper is is to better understand the PBW filtration of highest weight representations for Lie algebras of type $\tt B_n$. \par
It turned out that the PBW theory has a lot of connections to many areas of representation theory. For example, in the branch of combinatorial representation theory the FFL polytopes can be used to provide models for Kirillov--Reshetikhin crystals (see \cite{K13,K12}). Further, a purely combinatorial research shows that there exists an explicit bijection between FFL polytopes and the well--known (generalized) Gelfand--Tsetlin polytopes (see \cite[Theorem 1.3]{ABS11}). Although Berenstein and Zelevinsky defined the $\tt B_n$--analogue of Gelfand--Tsetlin polytopes in \cite{BZ88} it is much more complicated to define the $\tt B_n$--analogue of FFL polytopes (see \cite[Section 4]{ABS11}). One of the motivations of the present paper is to better understand (the difficulties of) the PBW filtration in this type.\\ 
In the branch of geometric representation theory the PBW filtration can be used to study flat degenerations of generalized flag varieties. The degenerate flag variety of type $\tt A_n$ and $\tt C_n$ respectively can be realized inside a product of Grassmanians (see \cite[Theorem 2.5]{F11M} and \cite[Theorem 1.1]{FFL14D}) and furthermore the degenerate flag variety is isomorphic to an appropriate Schubert variety (see \cite[Theorem 1.1]{IL14}).
Another powerful tool of studying these varieties are favourable modules, where the properties of a favourable module are governed by the combinatorics of an associated normal polytope (see for details \cite{FFL13} or Section~\ref{section6}). It has been proved in \cite{FFL13} that the degenerate flag varieties associated to favourable modules have nice properties. For example, they are normal and Cohen--Macaulay and, moreover, the underlying polytope can be interpreted as the Newton-Okounkov body for the flag variety. In the same paper several classes of examples for favourable modules of type $\tt A_n$, $\tt C_n$ and $\tt G_2$ respectively are provided; more classes of examples were constructed in \cite{BD14,BF14,Fou14}.\par
Beyond these cases very little is known about the PBW filtration and whether there exists a normal polytope parametrizing a PBW basis of the associated graded space. In this paper we prove the existence of such polytopes for several classes of representations of type $\tt B_n$. Moreover, we construct favourable modules (see Section~\ref{section6}) and use the results of \cite{G11} to describe the associated graded space for type $\tt G_2$ in terms of generators and relations (see Section~\ref{section7}).\par
If $n\leq 3$ we obtain similar results as in the aforementioned cases, namely we associate to any dominant integral weight $\lambda$ a normal polytope and prove that a basis of the associated graded space can be parametrized by the lattice points of this polytope. In other words we observe that the difficulties of the PBW theory for type $\tt B_n$ show up if $n\geq 4$. Our results are the following; see Section~\ref{section5} for the precise definitions. 
\begin{theom}
Let $\Lg$ be the Lie algebra of type $\tt B_3$ and $\lambda\in P^+$ be a dominant integral weight. There exists a normal polytope $P(\lambda)$ with the following properties:
\begin{enumerate} 
\item The lattice points $S(\lambda)$ parametrize a basis of $V(\lambda)$ and $\gr V(\lambda)$ respectively. In particular, 
$$\{\X^{\mathbf s}v_{\lambda}\mid \mathbf s \in S(\lambda)\}$$
forms a basis of $\gr V(\lambda)$.
\item For $\lambda,\mu \in P^+$, we have 
$$S(\lambda)+S(\mu)=S(\lambda+\mu).$$
%\item We have
%$$\mathbf I_{\lambda}=S(\lie n^-)\Big(\bu(\lie n^+)\circ \spa\big\{x^{\lambda(\beta^{\vee})+1}_{-\beta}\mid \beta\in R^+\big\}\Big).$$
\item The character and graded $q$-character respectively is given by
$$\cha V(\lambda)=\sum_{\mu\in \lie h^{*}}|S(\lambda)^{\mu}|e^{\mu},\quad \cha_q \gr V(\lambda)=\sum_{\mathbf s\in S(\lambda)}e^{\lambda-\wt(\mathbf s)}q^{\sum s_{\beta}}.$$
\item We have an isomorphism of $S(\lie n^-)$--modules
$$\gr V(\lambda+\mu)\cong S(\lie n^-)(v_{\lambda}\otimes v_{\mu})\subseteq \gr V(\lambda)\otimes \gr V(\mu).$$
\item The module $V(\lambda)$ is favourable.
\end{enumerate}
\end{theom}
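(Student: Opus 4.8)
The plan is to adapt the method of Feigin--Fourier--Littelmann from types $\tt A_n$ and $\tt C_n$ to the more delicate root system of type $\tt B_3$. Write $R^+$ for the positive roots and $\X^{\mathbf s}=\prod_{\beta\in R^+}f_\beta^{s_\beta}\in S(\lie n^-)$ for a multi-exponent $\mathbf s=(s_\beta)$, so that $\X^{\mathbf s}v_\lambda$ has weight $\lambda-\wt(\mathbf s)$ with $\wt(\mathbf s)=\sum_\beta s_\beta\beta$ and PBW-degree $\sum_\beta s_\beta$. The polytope $P(\lambda)$ of Section~\ref{section5} is cut out in $\R^{R^+}_{\ge 0}$ by inequalities $\sum_{\beta\in\mathbf p}s_\beta\le m_{\mathbf p}(\lambda)$ with $m_{\mathbf p}$ linear in $\lambda$; in particular $S(\lambda)+S(\mu)\subseteq S(\lambda+\mu)$ and $P(k\lambda)=kP(\lambda)$ are immediate, so one inclusion in (2) is free. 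Everything else rests on two pillars: a spanning statement for the PBW monomials and a dimension count.

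For spanning -- that $\{\X^{\mathbf s}v_\lambda:\mathbf s\in S(\lambda)\}$ generates $\gr V(\lambda)$ -- I would use the presentation of $\gr V(\lambda)$ as a quotient $S(\lie n^-)/I(\lambda)$ by an explicit ideal of straightening-type relations (built from the powers $f_\beta^{\langle\lambda,\beta^\vee\rangle+1}$ and their $\lie n^+$-derivatives). Fixing a homogeneous total order on monomials, one shows that every $\mathbf s\notin S(\lambda)$ violates one of the defining inequalities, exhibits a relation in $I(\lambda)$ whose leading monomial is $\X^{\mathbf s}$ and whose remaining monomials are strictly smaller, and iterates. \textbf{I expect this to be the main obstacle.} In type $\tt B_3$ the coexistence of long and short roots lengthens the relations among the $f_\beta$ and forces a genuine case analysis of which inequality is violated and of the shape of its associated straightening relation; this is precisely where the difficulties for $\tt B_n$, $n\ge 4$, appear, and controlling the bookkeeping for $n=3$ -- treating the combinatorially cleaner family $V(k\omega_2)$ of multiples of the adjoint representation separately -- is the technical heart of the paper.

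Granting spanning, $\dim V(\lambda)=\dim\gr V(\lambda)\le|S(\lambda)|$, and the reverse inequality -- hence linear independence in (1) -- I would prove by induction on $\lambda$, with the fundamental weights $\omega_1,\omega_2,\omega_3$ (the $7$-, $21$- and $8$-dimensional representations) as base cases where $|S(\omega_i)|=\dim V(\omega_i)$ is checked directly. For the step, write $\lambda=\lambda'+\omega_i$; a coproduct computation shows $I(\lambda)$ annihilates $v_{\lambda'}\otimes v_{\omega_i}$ in $\gr V(\lambda')\otimes\gr V(\omega_i)$, giving a surjection $\gr V(\lambda)\twoheadrightarrow S(\lie n^-)(v_{\lambda'}\otimes v_{\omega_i})$ -- the map in (4). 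Expanding $\X^{\mathbf r}(v_{\lambda'}\otimes v_{\omega_i})$ by the coproduct and using the bases of $\gr V(\lambda')$ and $\gr V(\omega_i)$ from the induction hypothesis, a leading-term argument shows that choosing, for each $\mathbf r\in S(\lambda)$, a decomposition $\mathbf r=\mathbf s+\mathbf t$ with $\mathbf s\in S(\lambda')$, $\mathbf t\in S(\omega_i)$ produces $|S(\lambda)|$ linearly independent vectors; such a decomposition exists exactly by the nontrivial inclusion $S(\lambda'+\omega_i)\subseteq S(\lambda')+S(\omega_i)$, which therefore has to be proved directly from the inequalities of $P$ -- for $n=3$ by an explicit argument sorting according to which inequalities are tight (this also yields the normality of $P(\lambda)$ and the remaining inclusion in (2)). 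The squeeze $\dim V(\lambda)\le|S(\lambda)|\le\dim S(\lie n^-)(v_{\lambda'}\otimes v_{\omega_i})\le\dim\gr V(\lambda)=\dim V(\lambda)$ forces equality throughout, completing (1), upgrading the surjection in (4) to an isomorphism, and pinning down (2).

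Then (3) is automatic: $V(\lambda)$ and $\gr V(\lambda)$ share the same character, and reading weights and PBW-degrees off the basis $\{\X^{\mathbf s}v_\lambda:\mathbf s\in S(\lambda)\}$ gives the stated (graded) character formulas. For (5) I would check that (1)--(4), together with the normality of $P(\lambda)$, are exactly the requirements in the definition of a favourable module recalled in Section~\ref{section6}: $P(\lambda)$ is a normal lattice polytope, its lattice points index a PBW-type basis of $\gr V(\lambda)$, the Minkowski sum property holds, and $\dim V(\lambda)=|S(\lambda)|$; hence $V(\lambda)$ is favourable with associated polytope $P(\lambda)$.
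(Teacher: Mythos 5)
Your overall architecture --- the free inclusion $S(\lambda)+S(\mu)\subseteq S(\lambda+\mu)$, the hard inclusion proved by a case analysis on the defining inequalities, tensor--product linear independence of the vectors $\X^{\mathbf r}(v_{\lambda'}\otimes v_{\omega_i})$, base cases at the fundamental weights, and a final squeeze --- matches the paper's up to one crucial point, and that point is a genuine gap. Your first pillar, a spanning statement for \emph{arbitrary} $\lambda$ obtained by straightening every $\X^{\mathbf s}$ with $\mathbf s\notin S(\lambda)$ modulo an ideal built from $\bu(\lie n^+)\circ x_{-\beta}^{\lambda(\beta^\vee)+1}$, is only announced (``I expect this to be the main obstacle''), not proved, and it cannot be waved through: the $\tt B_3$ polytope of Section~\ref{section5} is cut out by inequalities such as $(15)$--$(19)$ with coefficients equal to $2$, which do not come from Dyck paths, and the straightening machinery of Section~\ref{spanprop1} (differential operators applied to powers of $x_{-\theta}$ and $x_{-\alpha_{1,2n-i}}$) is developed only for the Dyck--path polytopes of rectangular weights. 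The paper never proves --- indeed never claims --- a spanning statement or a generators--and--relations presentation of $\gr V(\lambda)$ for general $\lambda$ in type $\tt B_3$; note that the $\tt B_3$ theorem, unlike the rectangular--weight theorem, contains no description of $\mathbf I_\lambda$. Without this pillar your squeeze loses its upper bound $\dim V(\lambda)\le |S(\lambda)|$ and the whole induction collapses.

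The paper's actual route keeps your second pillar but replaces the missing one by a dimension count. It proves the Minkowski property $S(\lambda)=S(\lambda-\omega_j)+S(\omega_j)$ by the case analysis of Section~\ref{section53}; linear independence of $\{\X^{\mathbf s}v_\lambda\mid \mathbf s\in S(\lambda)\}$ then comes from the essential--monomial mechanism and \cite{FFL13}, with a straightening argument needed only for the three fundamental weights (Lemma~\ref{reductlem2}~(iii)). The upper bound is supplied instead by Lemma~\ref{reductlem2}~(ii), proved in Section~\ref{section54}: by Minkowski, $|S(\lambda)|$ is the value at $(m_1,m_2,m_3)$ of the counting polynomial of $m_1P(\omega_1)+m_2P(\omega_2)+m_3P(\omega_3)$, a polynomial of total degree at most $9$, as is Weyl's dimension polynomial; the two are checked by computer to agree at all $(m_1,m_2,m_3)$ with $m_1+m_2+m_3\le 9$, and an interpolation (matrix invertibility) result forces them to coincide, so $|S(\lambda)|=\dim V(\lambda)$ for all $\lambda$ and spanning is automatic. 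To complete your proposal you would either have to actually construct straightening relations compatible with the coefficient--$2$ inequalities --- precisely the task the authors sidestep --- or substitute a dimension argument of this Ehrhart--versus--Weyl type.
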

The describing inequalities of the poytope $P(\lambda)$ are given in Section~\ref{section5}. We remark that point (2) of the above theorem implies that the building blocks of $S(\lambda)$ are $S(\omega_i)$, $1\leq i \leq n$ as in the cases $\tt A_n$, $\tt C_n$ and $\tt G_2$. In particular, in order to construct a basis of $\gr V(\lambda)$ it is enough to construct the polytopes $P(\omega_i)$ associated to fundamental weights. For type $\tt B_n$ and $n\geq 4$ we need a different approach for the following reason. For $n=4$ we construct a polytope $P(\omega_3)$ such that the lattice points $S(\omega_3)$ parametrize a basis of $\gr V(\omega_3)$, but the Minkowski--sum $S(\omega_3)+S(\omega_3)$ has cardinatlity $\dim V(2\omega_3)-1$. We observe that the building blocks in this case are $S(\omega_3)$ and $S(2\omega_3)$. In particular, we construct polytopes $P(\omega_3)$ and $P(2\omega_3)$ such that a basis of $\gr V(m\omega_3)$ is given by 
$$\underbrace{S(2\omega_3)+\cdots+S(2\omega_3)}_{\lfloor\frac{m}{2}\rfloor}+\delta_{(m \text{ mod } 2), 1}S(\omega_3),$$
where $\delta_{r,s}$ denotes Kronecker's delta symbol. Our results are the following; we refer to Section~\ref{section4} and Section~\ref{section6} for the precise definition. 
\begin{theom}
Let $\Lg$ be the Lie algebra of type $\tt B_n$ and $\lambda=m\omega_i$ be a rectangular weight. There exists a convex polytope $P(\lambda)$ such that the follwing holds. If $1\leq i \leq 3$ ($n$ arbitrary) or $1\leq n\leq 4$ ($i$ arbitrary) we have
\begin{enumerate}
\item The lattice points $S(\lambda)$ parametrize a basis of $V(\lambda)$ and $\gr V(\lambda)$ respectively. In particular, 
$$\{\X^{\mathbf s}v_{\lambda}\mid \mathbf s \in S(\lambda)\}$$
forms a basis of $\gr V(\lambda)$.
\item 
We have $\gr V(\lambda)\cong S(\lie n^{-})/\mathbf I_{\lambda}$, where
$$\mathbf I_{\lambda}=S(\lie n^-)\Big(\bu(\lie n^+)\circ \spa\big\{x^{\lambda(\beta^{\vee})+1}_{-\beta}\mid \beta\in R^+\big\}\Big).$$
\item The character and graded $q$-character respectively is given by
$$\cha V(\lambda)=\sum_{\mu\in \lie h^{*}}|S(\lambda)^{\mu}|e^{\mu},\quad \cha_q \gr V(\lambda)=\sum_{\mathbf s\in S(\lambda)}e^{\lambda-\wt(\mathbf s)}q^{\sum s_{\beta}}.$$
\end{enumerate}
We set $\epsilon_i=1$ if $i\leq 2$ and $\epsilon_i=2$ else.
\begin{enumerate}
\item[(4)] We have an isomorphism of $S(\lie n^-)$--modules for all $\ell\in \Z_+$:
$$\gr V(\lambda+\epsilon_i\ell\omega_i)\cong S(\lie n^-)(v_{\lambda}\otimes v_{\epsilon_i\ell \omega_i})\subseteq \gr V(\lambda)\otimes \gr V(\epsilon_i\ell\omega_i).$$
\item[(5)] For all $k,\ell\in \Z_+$ we have
$$S((k+\epsilon_i\ell)\omega_i)=S(k\omega_i)+S(\epsilon_i\ell\omega_i).$$
\item[(6)] The module $V(\epsilon_i\lambda)$ is favourable.
\end{enumerate}
\end{theom}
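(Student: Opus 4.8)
The plan is to adapt the method of \cite{FFL11,FFL13} to type $\tt B_n$ and to reduce the whole statement to parts (1) and (5). Two general facts will be used for all $\mu,\mu'\in P^+$: first, there is a surjection $\pi_\mu\colon S(\lie n^-)/\mathbf I_\mu\twoheadrightarrow \gr V(\mu)$ of $S(\lie n^-)$--modules (the vanishing of $x^{\mu(\beta^\vee)+1}_{-\beta}v_\mu$ in $V(\mu)$); second, since $v_\mu\otimes v_{\mu'}$ is a highest weight vector of weight $\mu+\mu'$ with $\bu(\lie g)(v_\mu\otimes v_{\mu'})=\bu(\lie n^-)(v_\mu\otimes v_{\mu'})=V(\mu+\mu')$ inside $V(\mu)\otimes V(\mu')$, one obtains a surjection $S(\lie n^-)(v_\mu\otimes v_{\mu'})\twoheadrightarrow \gr V(\mu+\mu')$. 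Once (1) and (5) are established the remaining parts are formal: (2) follows because (1) gives $\dim\gr V(\lambda)=|S(\lambda)|$ while the spanning step below gives $\dim S(\lie n^-)/\mathbf I_\lambda\le|S(\lambda)|$, so $\pi_\lambda$ is an isomorphism; (3) is read off the basis, $q$ recording the PBW degree $\sum_\beta s_\beta$; and (4) follows because a leading--term estimate bounds $\dim S(\lie n^-)(v_\lambda\otimes v_{\epsilon_i\ell\omega_i})\le|S(\lambda)+S(\epsilon_i\ell\omega_i)|$, which by (5) and (1) equals $\dim\gr V(\lambda+\epsilon_i\ell\omega_i)$, so the surjection above is an isomorphism.

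For (1) I would take $P(\lambda)=P(m\omega_i)$ to be the polytope of Section~\ref{section4} and argue in two steps. \emph{Spanning:} show $\{\X^{\mathbf s}v_\lambda\mid \mathbf s\in S(\lambda)\}$ spans $S(\lie n^-)/\mathbf I_\lambda$, hence $\gr V(\lambda)$. Fixing a homogeneous monomial order on $S(\lie n^-)$, for an exponent $\mathbf p\notin S(\lambda)$ with $\X^{\mathbf p}v_\lambda\ne 0$ I want to locate a violated defining inequality of $P(\lambda)$ and use the corresponding relation -- obtained by applying a monomial in $\bu(\lie n^+)$ to a generator $x^{\lambda(\beta^\vee)+1}_{-\beta}$ of $\mathbf I_\lambda$ and moving the raising operators to the right -- to rewrite $\X^{\mathbf p}$ modulo $\mathbf I_\lambda$ in terms of strictly smaller monomials; iterating terminates on $S(\lambda)$. \emph{Dimension count:} show $|S(\lambda)|\le\dim V(\lambda)$. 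First I would verify the base cases $|S(\omega_i)|=\dim V(\omega_i)$ and, for $i\ge 3$, $|S(2\omega_i)|=\dim V(2\omega_i)$ by enumerating lattice points against the Weyl dimension formula, and then induct with step $\epsilon_i$: assuming (1) for $k\omega_i$ and for $\epsilon_i\omega_i$, a leading--term argument gives linear independence of $\{\X^{\mathbf r}(v_{k\omega_i}\otimes v_{\epsilon_i\omega_i})\mid \mathbf r\in S(k\omega_i)+S(\epsilon_i\omega_i)\}$ in $V(k\omega_i)\otimes V(\epsilon_i\omega_i)$, hence $\dim V((k+\epsilon_i)\omega_i)=\dim\bu(\lie n^-)(v_{k\omega_i}\otimes v_{\epsilon_i\omega_i})\ge|S(k\omega_i)+S(\epsilon_i\omega_i)|$, which by (5) is $|S((k+\epsilon_i)\omega_i)|$; together with the spanning bound this forces equality throughout, so the spanning set is a basis. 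The range $1\le i\le 3$ should go through uniformly in $n$; the extra case $n\le 4$, $i=4$ is finite and can be done by hand.

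For (5) the inclusion $S(k\omega_i)+S(\epsilon_i\ell\omega_i)\subseteq S((k+\epsilon_i\ell)\omega_i)$ is immediate, the bounds defining $P(m\omega_i)$ depending linearly on $m$ (for $i\ge 3$ via the recursive description of $P(m\omega_i)$ in Section~\ref{section4}). The reverse inclusion is the substantive part: every lattice point of $P((k+\epsilon_i\ell)\omega_i)$ must be split as a sum of a lattice point of $P(k\omega_i)$ and one of $P(\epsilon_i\ell\omega_i)$, which I would attempt by an explicit greedy, inequality--by--inequality decomposition using the rectangular shape $m\omega_i$ and the smallness of $i$ (resp.\ $n$). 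I expect this decomposition, together with the type--$\tt B_n$ straightening law behind the spanning step, to be the main obstacle: the families of positive roots entering the defining inequalities are much more intricate than the Dyck paths of types $\tt A_n$ and $\tt C_n$, and controlling them is precisely what restricts the argument to $i\le 3$ or $n\le 4$.

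Finally, for (6): by Section~\ref{section6} it is enough to exhibit, for the weight $\epsilon_i\lambda=\epsilon_i m\omega_i$, a normal polytope whose lattice points furnish bases of $V(\epsilon_i\lambda)$ and $\gr V(\epsilon_i\lambda)$ and obey the Minkowski property, and $P(\epsilon_i m\omega_i)$ is the natural candidate. Parts (1)--(3) supply the bases and characters, and iterating the case $\ell=1$ of (5) gives $S(\epsilon_i m\omega_i)=\underbrace{S(\epsilon_i\omega_i)+\cdots+S(\epsilon_i\omega_i)}_{m}$, which is exactly normality of $P(\epsilon_i\omega_i)$ together with the additivity required of a favourable module. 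The passage from $\lambda$ to $\epsilon_i\lambda$ is forced when $i\ge 3$, where $P(\omega_i)$ is not additive -- already for $\tt B_4$ one has $|S(\omega_3)+S(\omega_3)|=\dim V(2\omega_3)-1$ -- so $P(\omega_i)$ cannot serve as a Minkowski building block, whereas $P(2\omega_i)=P(\epsilon_i\omega_i)$ does, by the case $k=\ell$ of (5).
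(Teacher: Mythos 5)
Your plan is essentially the paper's: the paper likewise reduces the whole theorem to (a) a spanning statement, proved by straightening any monomial violating a Dyck-path inequality into strictly smaller ones modulo $\mathbf I_{m\omega_i}$ using differential operators and a homogeneous monomial order (Section~\ref{spanprop1}, valid for all $i,n$), (b) the Minkowski-sum property $S((k+\epsilon_i\ell)\omega_i)=S(k\omega_i)+S(\epsilon_i\ell\omega_i)$, established by exactly the kind of explicit, case-by-case greedy decomposition you describe and only in the ranges $i\le 3$ or $n\le 4$ (Section~\ref{zweitehhh} and the appendix), and (c) the base-case dimension counts $\dim V(\ell\omega_i)=|S(\ell\omega_i)|$ for $\ell\le\epsilon_i$; parts (2)--(4) and (6) are then deduced formally from these, as you indicate, with normality of the polytope coming from the integer-coefficient inequalities plus the Minkowski property as in \cite{FFL13}.

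One of your two ``general facts'' is, however, stated in the wrong direction: there is no canonical surjection $S(\lie n^-)(v_\mu\otimes v_{\mu'})\twoheadrightarrow \gr V(\mu+\mu')$; the canonical map goes the other way, $\gr V(\mu+\mu')\twoheadrightarrow S(\lie n^-)(v_\mu\otimes v_{\mu'})$ (cf. \cite[Lemma 6.1]{FFL13}), because the PBW filtration of the Cartan component maps into the tensor-product filtration. If your surjection were a general fact, part (4) would hold trivially for all weights in all types, which is precisely the nontrivial content. Accordingly, your deduction of (4) via an upper bound $\dim S(\lie n^-)(v_\lambda\otimes v_{\epsilon_i\ell\omega_i})\le|S(\lambda)+S(\epsilon_i\ell\omega_i)|$ should be replaced by the paper's argument: the elements $\X^{\mathbf s}(v_\lambda\otimes v_{\epsilon_i\ell\omega_i})$, $\mathbf s\in S(\lambda)+S(\epsilon_i\ell\omega_i)$, are linearly independent (the same leading-term fact, \cite[Proposition 3.7]{FFL2011}, that you already invoke in your induction for (1)), which gives the lower bound $\dim S(\lie n^-)(v_\lambda\otimes v_{\epsilon_i\ell\omega_i})\ge|S(\lambda+\epsilon_i\ell\omega_i)|=\dim\gr V(\lambda+\epsilon_i\ell\omega_i)$, and the correctly oriented surjection then forces the isomorphism. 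With this repair your proposal coincides with the paper's proof.
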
 
The describing inequalities of the poytope $P(m\omega_i)$ in terms of (double) Dyck paths are given in Section~\ref{section4}, where we also show that $S(m\omega_i)$ parametrizes a generating set of $\gr V(m\omega_i)$ for arbitrary $m\in \mathbb{Z}_+$ and $1\leq i \leq n$. We conjecture that the above theorem remains true for arbitrary rectangular weights (see Conjecture~\ref{conjbasistypeB}) and we verified the cases $n\leq 8$ and $m\leq 9$ with a computer program. \par
Our paper is organized as follows: In Section~\ref{section2} we give the main notations. In Section~\ref{section3} we present the PBW filtration and establish the elementary results needed in the rest of the paper. In Section~\ref{section4} we introduce the notion of Dyck paths for the special odd orthogonal Lie algebra and give in various cases a presentation for the associated graded space. In Section~\ref{section5} we associate to any dominant integral weight for $\tt B_3$ a normal polytope parametrizing a basis of the associated graded space. In Section~\ref{section6} we give classes of examples for favourable modules.
%%%%%%%%%%%%%%%%%%%%%%%%%%%%%%%%%%%%%%%%%%%%%%%%%%%%%%%%%%%%%%%
%%%%%%%%%%%%%%%%%%%%%%%%%%%%%%%%%%%%%%%%%%%%%%%%%%%%%%%%%%%%%%%
%%%%%%%%%%%%%%%%%%%%%%%%%%%%%%%%%%%%%%%%%%%%%%%%%%%%%%%%%%%%%%%
%%%%%%%%%%%%%%%%%%%%%%%%%%%%%%%%%%%%%%%%%%%%%%%%%%%%%%%%%%%%%%%
\section{Preliminaries}\label{section2}
We denote the set of complex numbers by $\tC$ and, respectively, the set of integers, non--negative integers, and positive integers  by $\Z$, $\Z_+$, and $\N$. Unless otherwise stated, all the vector spaces considered in this paper are $\tC$-vector spaces and $\otimes$ stands for $\otimes_{\mathbb C}$.

%\subsection{}
%For a Lie algebra $\lie a$, we let $\bu(\lie a)$ be the universal enveloping algebra of $\lie a$.  If, in addition, $\lie a$ is $\mathbb Z_+$--graded, then $\bu(\lie a )$ acquires the unique compatible $\mathbb Z_+$--graded algebra structure. We shall be interested in $\mathbb Z_+$--graded  representations $V=\oplus_{r\in\mathbb Z_+} V[r]$ of $\mathbb Z_+$--graded Lie algebras $\lie a=\oplus_{r\in\mathbb Z_+} \lie a[r]$. 
%Clearly, $\lie a[0]$ -- the homogeneous  component of $\lie a$ of grade zero -- is a Lie subalgebra  of $\lie a$ and if $V$ is a $\mathbb Z_+$--graded representation, then every homogeneous component $V[r]$ is a $\lie a[0]$--module. A morphism between graded $\lie a$--representations is a grade preserving map of $\lie a$--modules.

%%%%%%%%%%%%%%%%%%%%%%%%%%%%%%%%%%%%%%%%%%%%%%%%%

\subsection{}  We refer to \cite{Ca05,K90} for the general theory of Lie algebras. We denote by $\lie g$ a complex finite--dimensional simple Lie algebra. We fix a Cartan subalgebra $\lie h$ of $\lie g $ and denote by $R$ the set of roots of $\lie g$ with respect to $\lie h$. For $\alpha\in R$ we denote by $\alpha^{\vee}$ its coroot. We fix $\Delta=\{\alpha_1,\dots,\alpha_n\}$ a basis of simple roots for $R$; the corresponding sets of positive and negative roots are denoted as usual by $R^{\pm}$. For $1\leq i\leq n$, define $\omega_i\in\lie h^{*}$ by $\omega_i(\al_j^\vee)=\delta_{i,j}$, for $1\leq j\leq n$, where $\delta_{i,j}$  is the Kronecker's delta symbol. The element $\omega_i$ is the fundamental weight of $\lie g$ corresponding to the coroot $\al_i^\vee$.
%Let $R_\ell$ and $R_s$ denote  respectively the subsets of $R$ consisting of the long and short roots and denote by $R_\ell^\pm, R_s^\pm$ the corresponding subsets of positive and negative roots. For simply-laced types, by convention, $R_\ell=R$ and $R_s=\emptyset$.
Let  $Q=\oplus_{i=1}^n \Z \al_i$ be the root lattice of $R$ and $Q^+=\oplus_{i=1}^n \Z_+ \al_i$ be the respective $\Z_+$--cone. The weight lattice of $R$ is denoted by $P$ and the cone of dominant weights is denoted by $P^+$. Let $\Z[P]$ be the integral group ring of $P$ with basis $e^{\mu}, \mu\in P$.

%%%%%%%%%%%%%%%%%%%%%%%%%%%%%%%%%%%%%%%%%%%%%%%%%

\subsection{}
Given $\alpha\in R^{+}$ let $\lie g_{\pm\alpha}$ be the corresponding root space and fix a generator $x_{\pm \alpha}\in \lie g_{\pm\alpha}$. We define several subalgebras of $\lie g$ that will be needed later. Let $\lie b$ be the Borel subalgebra corresponding to $R^+$, and let $\lie n^+$ be its nilpotent radical,
$$\lie b=\lie h\oplus \lie n^+,\ \  \ \ \lie n^\pm =\bigoplus_{\alpha\in R^+}\lie g_{\pm \alpha}.$$  

The Lie algebra $\lie g$ has a triangular decomposition
$$\lie g = \lie n^-\oplus \lie h \oplus \lie n^+.$$
For the subset $\Delta-\{\alpha_{i_1},\dots,\alpha_{i_s}\}$ of $\Delta$ we denote by $\lie p_{i_1,\dots,i_s}$ the corresponding parabolic subalgebra of $\lie g$, i.e. the Lie algebra generated by $\lie b$ and all root spaces $\lie g_{-\alpha}$, \ $\alpha\in \Delta-\{\alpha_{i_1},\dots,\alpha_{i_s}\}$. The maximal parabolic subalgebras correspond to subsets of the form $\Delta-\{\alpha_{i}\}$, $1\leq i \leq n$. 
The Lie algebra $\lie g$ contains the parabolic subalgebra as a direct summand and therefore 

$$\lie g=\lie p_{i_1,\dots,i_s}\oplus \lie n_{i_1,\dots,i_s}^-.$$ 
 
We can split off $\lie p_{i_1,\dots,i_s}$ and consider the nilpotent vector space complement with root space decomposition 

 $$
 \lie n_{i_1,\dots,i_s}^-=\bigoplus_{\alpha\in R_{i_1,\dots,i_s}^+} \lie g_{-\alpha}.
 $$
For instance, if $\lie g$ is of type $\tt A_n$ we have $R^+=\{\alpha_{r,s}\mid 1\leq r\leq s \leq n\}$ and $R_{i}^+=\{\alpha_{r,s}\in R^+ \mid r\leq i \leq s\}$ where $\alpha_{r,s}=\sum_{j=r}^s\alpha_j$. In the following we shall be interested in maximal parabolic subalgebras. 
%For later use we write $R_{i}^+(\ell)$ for $R^+_{i}-\{\alpha_{p,q}\mid q>\ell\}$.
\section{PBW filtration and graded spaces}\label{section3}

We start by recalling some standard notation and results on the representation theory of $\lie g$.
\subsection{}
A $\lie g$--module $V$ is said to be a weight module if it is $\lie h$--semisimple, $$V=\bigoplus_{\mu\in\lie h^*}V^\mu,\ \ \ V^\mu=\{v\in V \mid hv=\mu(h)v,\ \ h\in\lie h\}.$$ Set $\wt V=\{\mu\in\lie h^*:V^\mu\ne 0\}$.  
Given $\lambda\in P^+$, let $V(\lambda)$ be the irreducible finite--dimensional $\lie g$--module generated by an element $v_\lambda$ with defining relations:
\begin{equation}\label{simplerelations}\lie n^+ v_\lambda=0,\ \ \ hv_\lambda=\lambda(h)v_\lambda,\ \ \ x_{-\alpha}^{\lambda(\alpha^\vee)+1}v_\lambda=0,\end{equation}
for all $h\in\lie h$ and $\alpha\in R^+$.  We have $\wt V(\lambda)\subset\lambda- Q^+$ and $\wt V(\lambda)$ is a $W$--invariant subset of $\lie h^*$. If $\dim V^{\mu}<\infty$ for all $\mu\in \wt V$, then we define $\ch V: \lie h^{*}\longrightarrow \Z_+$, by sending $\mu\mapsto \dim V^{\mu}$. If $\wt V$ is a finite set, then
$$\ch V=\sum_{\mu\in \lie h^{*}}\dim V^{\mu}e^{\mu}\in \Z[P].$$
\subsection{}
A $\Z_+$--filtration of a vector space $V$ is a collection of subspaces $\mathbf F=\{V_s\}_{s\in \Z_+}$, such that $V_{s-1}\subseteq V_s$ for all $s\geq 1$. We build the associated graded space with respect to the filtration $\mathbf F$ 
$$\gr^{\mathbf F} V=\bigoplus_{s\in \Z_+}V_s/V_{s-1}, \mbox{where $V_{-1}=0$}.$$
In this paper we shall be interested in the PBW filtration of the irreducible module $V(\lambda)$ which we will explain now. Consider the increasing degree filtration on the universal enveloping algebra $\bu(\lie n^-)$:
 \begin{equation*}\label{PBWfiltration}\bu(\lie n^-)_s=\spa\{x_{1}\cdots x_{l}\mid x_j\in\lie n^-, l\leq s\},\end{equation*}
for example, $\bu(\lie n^-)_0=\tC.$ The induced increasing filtration $\mathbf V=\{V(\lambda)_s\}_{s\in \Z_+}$ on $V(\lambda)$ where $V(\lambda)_s:=\bu(\lie n^-)_sv_{\lambda}$ is called the PBW filtration. With respect to the PBW filtration we build the associated graded space $\gr^{\mathbf V} V(\lambda)$ as above. To keep the notation as simple as possible, we will write $\gr^{} V(\lambda)$ to refer to $\gr^{\mathbf V} V(\lambda)$. The graded $q$--character is defined as 
$$\ch_q \gr V(\lambda)=\sum_{\mu\in \lie h^{*}}\Big(\sum_{s\geq 0}(\dim V(\lambda)^{\mu}_s/V(\lambda)^{\mu}_{s-1}) q^s\Big)e^{\mu}, \mbox{ where } \gr V(\lambda)^{\mu}=\bigoplus_{s\in \Z_+}V(\lambda)^{\mu}_s/V(\lambda)^{\mu}_{s-1}.$$ 
The following is immediate:
\begin{lem}\label{n+structure}
The action of $\bu(\lie n^-)$ on $V(\lambda)$ induces a structure of a $S(\lie n^-)$ module on $\gr V(\lambda)$. Moreover,
$$\gr V(\lambda)=S(\lie n^-)v_{\lambda}\cong S(\lie n^-)/\mathbf I_{\lambda},$$ 
for some homogeneous ideal $\mathbf I_{\lambda}$. The action of $\bu(\lie n^+)$ on $V(\lambda)$ induces a structure of a $\bu(\lie n^+)$ module on $\gr V(\lambda)$.
\end{lem}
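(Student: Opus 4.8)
The plan is to derive all three assertions from the PBW theorem together with the elementary fact that the degree filtration $\{\bu(\lie n^-)_s\}$ is compatible with multiplication; apart from one point in the third claim, the whole argument is formal manipulation of filtered and graded objects.

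First I would record that $\bu(\lie n^-)_s\,\bu(\lie n^-)_t\subseteq\bu(\lie n^-)_{s+t}$, so that $\gr\bu(\lie n^-)=\bigoplus_{s}\bu(\lie n^-)_s/\bu(\lie n^-)_{s-1}$ is a graded algebra; since $xy-yx=[x,y]\in\lie n^-=\bu(\lie n^-)_1$ for $x,y\in\lie n^-$, this graded algebra is commutative, and the PBW theorem identifies it canonically with $S(\lie n^-)$. Because $V(\lambda)_t=\bu(\lie n^-)_tv_\lambda$ one has $\bu(\lie n^-)_s\cdot V(\lambda)_t\subseteq V(\lambda)_{s+t}$, so the $\bu(\lie n^-)$-action on $V(\lambda)$ descends to an action of $\gr\bu(\lie n^-)\cong S(\lie n^-)$ on $\gr V(\lambda)$ by $\bar x\cdot\bar w:=\overline{xw}$; independence of the chosen representatives follows at once from these inclusions and the module axioms from associativity in $\bu(\lie g)$. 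This establishes the first assertion.

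For the isomorphism I would use $V(\lambda)=\bu(\lie g)v_\lambda=\bu(\lie n^-)v_\lambda$ (since $\bu(\lie h)\bu(\lie n^+)v_\lambda=\tC v_\lambda$), so $\bigcup_sV(\lambda)_s=V(\lambda)$ and $V(\lambda)_s=\bu(\lie n^-)_sv_\lambda$. The map $S(\lie n^-)\to\gr V(\lambda)$, $\xi\mapsto\xi\cdot\bar v_\lambda$, is then a degree-preserving homomorphism of graded $S(\lie n^-)$-modules, and it is surjective because every class in $V(\lambda)_s/V(\lambda)_{s-1}$ is represented by some $xv_\lambda$ with $x\in\bu(\lie n^-)_s$. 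Hence $\gr V(\lambda)=S(\lie n^-)\bar v_\lambda\cong S(\lie n^-)/\mathbf I_\lambda$, where $\mathbf I_\lambda$ is the kernel; being the kernel of a graded map, $\mathbf I_\lambda$ is a homogeneous ideal.

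The third assertion requires the single non-formal input that $\lie n^+$ preserves the PBW filtration, i.e.\ $\lie n^+V(\lambda)_s\subseteq V(\lambda)_s$, which I would prove by induction on $s$ (the case $s=0$ being $\lie n^+v_\lambda=0$). For $s\geq 1$ write a spanning vector of $V(\lambda)_s$ as $yu$ with $y\in\lie n^-$ and $u\in V(\lambda)_{s-1}$; then for $x\in\lie n^+$, $xyu=y(xu)+[x,y]u$, where $xu\in V(\lambda)_{s-1}$ by the inductive hypothesis so that $y(xu)\in V(\lambda)_s$, while decomposing $[x,y]=h+z^-+z^+$ along $\lie g=\lie h\oplus\lie n^-\oplus\lie n^+$ gives $hu\in V(\lambda)_{s-1}$ (as $\lie h$ preserves the filtration), $z^-u\in\lie n^-V(\lambda)_{s-1}\subseteq V(\lambda)_s$, and $z^+u\in\lie n^+V(\lambda)_{s-1}\subseteq V(\lambda)_{s-1}$ again by the inductive hypothesis. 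Once the filtration is preserved, each $x\in\lie n^+$ induces an endomorphism of every $V(\lambda)_s/V(\lambda)_{s-1}$, the relations $xy-yx=[x,y]$ pass to $\gr V(\lambda)$, and hence $\gr V(\lambda)$ carries the asserted $\bu(\lie n^+)$-module structure. The step demanding the most care is setting up this last induction so that the three bracket cases close uniformly; everything else is routine.
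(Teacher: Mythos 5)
Your argument is correct and is exactly the standard one the paper leaves implicit (the lemma is stated there with ``The following is immediate'' and no written proof): compatibility of the degree filtration with the action plus PBW gives the $S(\lie n^-)$--structure and the cyclicity statement, and the induction showing $\lie n^+ V(\lambda)_s\subseteq V(\lambda)_s$ is the one routine verification needed for the $\bu(\lie n^+)$--action. No gaps; the decomposition of $[x,y]$ along $\lie h\oplus\lie n^-\oplus\lie n^+$ and the weight-space argument for $\lie h$ preserving the filtration are handled correctly.
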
 
By the previous lemma, the representation $\gr V(\lambda)$ is cyclic as a $S(\lie n^-)$--module. By the PBW theorem and the defining relations \eqref{simplerelations} of $V(\lambda)$ we obtain the following proposition.
\begin{prop}\label{spanningset1}
The set 
$$\Big\{\prod_{\beta\in R^+}x_{-\beta}^{m_{\beta}}v_{\lambda} \mid m_{\beta}\in \Z_+, m_{\beta}\leq \lambda(\beta^{\vee})\Big\}$$
is a (finite) spanning set of $\gr V(\lambda)$. 
\end{prop}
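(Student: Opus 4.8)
The plan is to deduce the spanning statement from the three defining relations in \eqref{simplerelations} together with the Poincar\'e--Birkhoff--Witt theorem, exactly in the spirit of the analogous statements in \cite{FFL11}. First I would fix a total order on the set of positive roots $R^+=\{\beta_1,\dots,\beta_N\}$ and recall that, by the PBW theorem applied to $\lie n^-$, the ordered monomials $\prod_{j=1}^N x_{-\beta_j}^{m_{\beta_j}}$ with $m_{\beta_j}\in\Z_+$ form a basis of $\bu(\lie n^-)$. Since $V(\lambda)=\bu(\lie n^-)v_\lambda$ by the highest weight construction, the set of all such ordered monomials applied to $v_\lambda$ already spans $V(\lambda)$, hence spans $\gr V(\lambda)=S(\lie n^-)v_\lambda$ by Lemma~\ref{n+structure}; note that in $S(\lie n^-)$ the order of the factors is irrelevant, so a spanning set is indexed by functions $\beta\mapsto m_\beta$ on $R^+$.

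The remaining point is the bound $m_\beta\le\lambda(\beta^\vee)$. For a single root $\beta\in R^+$ consider the $\lie{sl}_2$-triple $\{x_{\beta},\beta^\vee,x_{-\beta}\}$; the relation $x_{-\beta}^{\lambda(\beta^\vee)+1}v_\lambda=0$ is one of the defining relations of $V(\lambda)$, and it passes to $\gr V(\lambda)$ since it is homogeneous of degree $\lambda(\beta^\vee)+1$ in the PBW filtration. Thus any monomial in which some exponent $m_\beta$ exceeds $\lambda(\beta^\vee)$ can be rewritten: in $S(\lie n^-)$ one factors out $x_{-\beta}^{\lambda(\beta^\vee)+1}$ and moves it to the right past the other (commuting) generators until it hits $v_\lambda$ and annihilates it. More precisely, if $m_\beta\ge\lambda(\beta^\vee)+1$ then
$$
\Big(\prod_{\gamma\in R^+}x_{-\gamma}^{m_\gamma}\Big)v_\lambda
=\Big(\prod_{\gamma\ne\beta}x_{-\gamma}^{m_\gamma}\Big)x_{-\beta}^{m_\beta-\lambda(\beta^\vee)-1}\,x_{-\beta}^{\lambda(\beta^\vee)+1}v_\lambda=0
$$
in $\gr V(\lambda)$, because $S(\lie n^-)$ is commutative. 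Hence every monomial with some $m_\beta>\lambda(\beta^\vee)$ is zero in $\gr V(\lambda)$, so those with all $m_\beta\le\lambda(\beta^\vee)$ already span; this set is finite since it is contained in $\prod_{\beta\in R^+}\{0,1,\dots,\lambda(\beta^\vee)\}$.

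I do not expect a serious obstacle here: the only subtlety is the bookkeeping that the relation $x_{-\beta}^{\lambda(\beta^\vee)+1}v_\lambda=0$, which a priori holds in $V(\lambda)$, survives in the associated graded module $\gr V(\lambda)$ and can be applied to a generator sitting at the right end of a commutative monomial. This is immediate once one observes that the relation is homogeneous for the PBW degree and that in $S(\lie n^-)$ one may freely reorder factors; no commutator corrections appear, in contrast to the situation inside $\bu(\lie n^-)$. The statement is therefore a direct consequence of the PBW theorem and the defining relations \eqref{simplerelations}.
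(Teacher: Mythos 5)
Your proposal is correct and follows essentially the same route as the paper, which states the proposition as an immediate consequence of the PBW theorem and the defining relations \eqref{simplerelations}, using Lemma~\ref{n+structure} to transfer the relation $x_{-\beta}^{\lambda(\beta^\vee)+1}v_\lambda=0$ to the commutative $S(\lie n^-)$-module $\gr V(\lambda)$. Your added bookkeeping (homogeneity of the relation and the absence of commutator corrections in the graded space) just makes explicit what the paper leaves implicit.
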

%If $\lambda\in P^+$ and $\lambda(\alpha_i^{\vee})=0$ iff $i\in \{i_1,\dots,i_s\}$, then obviously 
%$$\Big\{\prod_{\beta\in R_{i_1,\dots,i_s}^+}x_{-\beta}^{m_{\beta}}v_{\lambda}\mid m_{\beta}\in \Z_+, m_{\beta}\leq \lambda(\beta^{\vee})\Big\}$$ is a (finite) spanning set of $\gr V(\lambda)$. 
For a multi--exponent $\mathbf s=(s_{\beta})_{\beta\in R^+}\in \Z^{|R^+|}_+$ \big(resp. $\mathbf s=(s_{\beta})_{\beta\in R_{i}^+} \in \Z^{|R_{i}^+|}_+$\big)  
we denote the corresponding monomial $\prod_{\beta\in R^+}x_{-\beta}^{s_{\beta}}$ \big(resp. $\prod_{\beta\in R_{i}^+} x_{-\beta}^{s_{\beta}}$\big) 
for simplicity by $\X^{\mathbf s}\in S(\lie n^-)$.

In recent years it became a popular goal to determine the $S(\lie n^-)$--structure of the representations $\gr V(\lambda)$, i.e. to describe the ideals $\mathbf I_{\lambda}$ and furthermore to find a PBW basis for these graded representations, favourably parametrized by the integral points of a suitable convex polytope. For the finite--dimensional Lie algebras of type $\tt A_n$, $\tt C_n$ and $\tt G_2$ various results are known which we will discuss later (see \cite{FFL11,FFL2011,G11}). The focus of this paper is on the Lie algebra of type $\tt B_n$ where many technical difficulties show up.

\subsection{}
Let $\mathbf D\subseteq\mathcal P(R^+)$ be a subset of the power set of $R^+$. We attach to each element $\mathbf p\in \mathbf D$ a non--negative integer $M_{\mathbf p}(\lambda)$. We consider the following polytope 
\begin{equation}\label{polyaundcund}P(\mathbf D,\lambda)=\Big\{\mathbf s=(s_{\beta})_{\beta\in R^+}\in \R_+^{|R^+|}\mid \forall \mathbf p\in \mathbf D: \sum_{\beta\in \mathbf p} s_{\beta} \leq M_{\mathbf p}(\lambda) \Big\}.\end{equation}
The integral points of the above polytope are denoted by $S(\mathbf D,\lambda)$.
The proof of part (i) of the following theorem for type $\tt A_n$ can be found in \cite{FFL11}, for type $\tt C_n$ in \cite{FFL2011} and for type $\tt G_2$ in \cite{G11}. Part (ii) is only proved for type $\tt A_n$ and $\tt C_n$, but a simple calculation shows that part (ii) for type $\tt G_2$ remains true (for a proof see Proposition~\ref{forg2id} in the appendix).
\begin{thm}\label{basistypeAandC}
There exist a set $\mathbf D\subseteq\mathcal P(R^+)$ and suitable non--negative integers $M_{\mathbf p}(\lambda)$ attached to each element $\mathbf p\in \mathbf D$, such that the following holds:
\renewcommand{\theenumi}{\roman{enumi}}%
\begin{enumerate}
\item The lattice points $S(\mathbf D,\lambda)$ parametrize a basis of $V(\lambda)$ and $\gr V(\lambda)$ respectively. In particular, 
$$\{\X^{\mathbf s}v_{\lambda}\mid \mathbf s \in S(\mathbf D,\lambda)\}$$
forms a basis of $\gr V(\lambda)$.
\item We have
$$\mathbf I_{\lambda}=S(\lie n^-)\big(\bu(\lie n^+)\circ \spa\{x^{\lambda(\beta^{\vee})+1}_{-\beta}\mid \beta\in R^+\}\big).$$
\end{enumerate}
\end{thm}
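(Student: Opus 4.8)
The plan is to establish both parts type by type, since the combinatorial data $(\mathbf D, M_{\bullet}(\lambda))$ has no uniform description valid in types $\tt A_n$, $\tt C_n$ and $\tt G_2$ simultaneously. In each case $\mathbf D$ is a set of \emph{Dyck paths} in the positive root poset --- finite sequences of roots beginning and ending at simple roots and descending in a controlled way --- and $M_{\mathbf p}(\lambda)$ is the sum of the values $\lambda(\beta^{\vee})$ at the endpoints $\beta$ of $\mathbf p$ (with a type-dependent normalisation). With these choices, part (i) for type $\tt A_n$ is the main theorem of \cite{FFL11}, for type $\tt C_n$ it is that of \cite{FFL2011}, and for type $\tt G_2$ it is \cite{G11}; so what remains for (i) is only to recall the structure of those proofs, and for (ii) to add the missing $\tt G_2$ case.

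For part (i) the argument in each type follows a three-step scheme. \emph{Spanning.} Starting from the finite spanning set of Proposition~\ref{spanningset1}, one uses the relations $x_{-\beta}^{\lambda(\beta^{\vee})+1}v_\lambda=0$ and their images under $\bu(\lie n^+)$ --- which act on $\gr V(\lambda)$ by Lemma~\ref{n+structure} --- to produce enough ``straightening'' relations: with respect to a suitable monomial order on $S(\lie n^-)$ one shows that every $\X^{\mathbf s}v_\lambda$ with $\mathbf s\notin S(\mathbf D,\lambda)$ is a linear combination of strictly smaller monomials. Hence $\{\X^{\mathbf s}v_\lambda\mid\mathbf s\in S(\mathbf D,\lambda)\}$ spans $\gr V(\lambda)$, and therefore also spans $V(\lambda)$. \emph{Counting.} One proves the Minkowski property $S(\mathbf D,\lambda)+S(\mathbf D,\mu)=S(\mathbf D,\lambda+\mu)$, which reduces the problem to the fundamental weights, where one checks $|S(\mathbf D,\omega_i)|=\dim V(\omega_i)$; combined with the tensor product filtration $S(\lie n^-)(v_\lambda\otimes v_\mu)\subseteq\gr V(\lambda)\otimes\gr V(\mu)$ this yields $|S(\mathbf D,\lambda)|=\dim V(\lambda)$ in general (refined, if desired, to a weight-by-weight identity with $\cha V(\lambda)$). \emph{Conclusion.} Since $\dim\gr V(\lambda)=\dim V(\lambda)$ for the associated graded of any filtration, a spanning set of the right cardinality is a basis, and lifting it gives a basis of $V(\lambda)$.

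For part (ii), write $J_\lambda=S(\lie n^-)\big(\bu(\lie n^+)\circ\spa\{x_{-\beta}^{\lambda(\beta^{\vee})+1}\mid\beta\in R^+\}\big)$. The inclusion $J_\lambda\subseteq\mathbf I_\lambda$ is immediate from the defining relations of $V(\lambda)$ and Lemma~\ref{n+structure}. Conversely, the spanning step above used only relations lying in $J_\lambda$, so the images of the monomials $\X^{\mathbf s}$, $\mathbf s\in S(\mathbf D,\lambda)$, already span $S(\lie n^-)/J_\lambda$; hence $\dim S(\lie n^-)/J_\lambda\le|S(\mathbf D,\lambda)|=\dim\gr V(\lambda)=\dim S(\lie n^-)/\mathbf I_\lambda$, and together with $J_\lambda\subseteq\mathbf I_\lambda$ this forces $J_\lambda=\mathbf I_\lambda$. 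For types $\tt A_n$ and $\tt C_n$ this is carried out in \cite{FFL11,FFL2011}; for type $\tt G_2$ it is not in \cite{G11}, and we supply it by a direct check that each inequality defining $P(\mathbf D,\lambda)$ is realised by an explicit element of $\bu(\lie n^+)\circ x_{-\beta}^{\lambda(\beta^{\vee})+1}$ acting on $\gr V(\lambda)$ --- this is Proposition~\ref{forg2id} in the appendix.

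The main obstacle is the spanning step in the non-fundamental case: extracting from the manifestly available relations enough straightening relations to rewrite \emph{every} non-Dyck monomial, which is where the precise combinatorics of the Dyck paths and the choice of monomial order enter. This is the technical core of \cite{FFL11} and \cite{FFL2011}; for $\tt G_2$ the small size of the root system keeps the corresponding bookkeeping --- and the verification of (ii) in the appendix --- comparatively short.
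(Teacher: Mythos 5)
Your proposal is correct and follows essentially the same route as the paper: part (i) and part (ii) for types $\tt A_n$, $\tt C_n$ rest on the cited results of \cite{FFL11,FFL2011,G11}, and the missing $\tt G_2$ case of (ii) is handled exactly as in Proposition~\ref{forg2id} — one shows every monomial violating an inequality of $P(\mathbf D,\lambda)$ can be straightened modulo the candidate ideal, and then compares with the basis of $\gr V(\lambda)$ from \cite{G11} via the surjection $S(\lie n^-)/\mathbf I_{\lambda}\twoheadrightarrow \gr V(\lambda)$. One small caveat: your description of $M_{\mathbf p}(\lambda)$ as the sum of $\lambda(\beta^{\vee})$ over the endpoints of $\mathbf p$ is not the right normalisation (in type $\tt A_n$, for example, it is $\sum_{k=i}^{j}\lambda(\alpha_k^{\vee})$ for a path running from $\alpha_i$ to $\alpha_j$), but since you defer the precise combinatorial data to the cited papers this does not affect the argument.
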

We note that the order in the theorem above is important when treating the representation $V(\lambda)$, but we can choose for any $\mathbf s \in S(\mathbf D,\lambda)$ an arbitrary order of factors $x_{-\beta}$ in the product $\X^{\mathbf s}$, such that the set $$\{\X^{\mathbf s}v_{\lambda}\mid \mathbf s \in S(\mathbf D,\lambda)\}$$ forms a basis of $V(\lambda)$.
\begin{rem}
The set $\mathbf D$ and non--negative integers $M_{\mathbf p}(\lambda)$ are explicitly described in these papers. Part (i) of the above theorem for type $\tt A_n$ was conjectured by Vinberg \cite{V05}. 
%The difficulties for type $\tt B_n$ are among others that for general $\lambda$ the coefficients of the inequalities in \eqref{polyaundcund} can be greater than 1.
\end{rem}
Another interesting point is to understand the geometric aspects of the PBW filtration. In \cite{F2012S} degenerate flag varieties have been introduced which are certain varieties in the projectivization $\mathbb{P}(\gr V(\lambda))$ of $\gr V(\lambda)$. In type $\tt A_n$ (see \cite{F2012S,FF2013A}) and type $\tt C_n$ (see \cite{FFL14D}) it has been shown that the degenerate flag varieties can be embedded into a product of Grassmanians and desingularizations have been constructed.
%and the defining ideal is generated by the set of degenerate Pfl\"ucker relations. 
Recently in \cite{FFL13} the notion of favourable modules has been introduced whose properties are governed by the combinatorics of an associated polytope and it has been shown that the corresponding degenerate flag varieties have nice properties, e.g. they are projectively normal and arithmetically Cohen-Macaulay varieties (see also Section~\ref{section7}). Especially it has been proved that $V(\lambda)$ for types $\tt A_n$, $\tt C_n$ and $\tt G_2$ are favourable (with respect to the polytope from Theorem~\ref{basistypeAandC}), where the proof of this fact uses the Minkowski sum property of these polytopes. Our aim is to obtain similar results to Theorem~\ref{basistypeAandC} for type $\tt B_n$ for certain dominant integral weights and, motivated by the corresponding nice geometry of favourable modules, to construct various favourable modules.

\section{Dyck path, polytopes and PBW bases}\label{section4}
The notion of Dyck paths is used in the papers \cite{FFL11,FFL2011} in order to describe the set $\mathbf D$ from Theorem~\ref{basistypeAandC} (and thus $S(\mathbf D,\lambda)$), but appears earlier in the literature in a different context. In this section we define two types of paths (type 1 and type 2), which we also call Dyck paths to avoid deviating from the established terminology. The set of Dyck paths of type 1 is similar to the definition given in \cite{FFL11,FFL2011}, while the type 2 Dyck paths are unions of type 1 Dyck paths with some extra conditions and are called double Dyck paths. 
\subsection{}
To each finite partially ordered set $(S,\leq)$ we can associate a diagram, called the Hasse diagram. The vertices are given by the elements in $S$ and we draw a line segment from $x$ to $y$ whenever $y$ covers $x$, that is, whenever $x < y$ and there is no $z$ such that $x < z < y$. We consider the partial order $\leq$ on $R^+$ given by $\alpha \leq \beta :\Leftrightarrow \beta -\alpha \in Q^+$. We shall be interested in the Hasse diagram of $(R^+,\leq)$ and $(R^+_{i},\leq)$. Note that the Hasse diagram of $R^+_{i}$ is obtained from the Hasse diagram of $R^+$ by erasing all vertices $\alpha\in R^+\backslash R^+_{i}$. 
\begin{example}

We find below the Hasse diagram of $(R^+,\leq)$ for type $\tt A_n$ and $\tt B_n$ respectively. The Hasse diagram for $(R^+_3,\leq)$ of type $\tt B_n$ is highlighted in red. Recall that the highest root is denoted by $\theta$.
\begin{center}
\begin{tikzpicture}
%[rotate=90]
  %\node (1) at (0,0.5) {};
  %\fill[black] (1) circle (1pt);
%  \node (1) at (0,0)[circle,draw,fill=black,minimum size=1pt] {};
  %\node (2) at (0,1) {};
  %\fill[black] (2) circle (1pt);
  \node (3) at (0.5-9.5,0.25+6.25) {};
  \fill[black] (3) circle (1pt);
  \node (4) at (1-9.5,0.25+6.25) {};
  \fill[black] (4) circle (1pt);
  \node (5) at (1.5-9.5,0.25+6.25) {};
  \fill[black] (5) circle (1pt);
  \node (6) at (0.5-9.5,0.75+6.25) {};
  \fill[black] (6) circle (1pt);
  \node (7) at (1-9.5,0.75+6.25) {};
  \fill[black] (7) circle (1pt);
   %\node (8) at (0,1.5) {};
  %\fill[black] (8) circle (1pt);
    %\draw (0.5-9.5,1.25+6.25) rectangle (0.5-9.6,1.25+6.26)
  \node (9) at (0.5-9.5,1.25+6.25) {};
    %\draw (0.5-9.5,1.25+6.25) rectangle (0.5-9.6,1.25+6.26)
  \fill[black] (9) circle (1pt);
  %  \fill[black] (0.5-9.55,1.25+6.2) rectangle (0.5-9.45,1.25+6.3);
  %\node (10) at (0,2) {};
  %\fill[black] (10) circle (1pt);
 
  %\node (11) at (0,0) {};
  %\fill[black] (11) circle (1pt);
  \node (12) at (0.5-9.5,-0.25+6.25) {};
  \fill[black] (12) circle (1pt);
  \node (13) at (1-9.5,-0.25+6.25) {};
 \fill[black] (13) circle (1pt);
  \node (14) at (1.5-9.5,-0.25+6.25) {};
  \fill[black] (14) circle (1pt);
  %\node (26) at (0,-0.75) {};
 %\fill[black] (26) circle (1pt);
  \node (27) at (0.5-9.5,-0.75+6.25) {};
  \fill[black] (27) circle (1pt);
  \node (28) at (1-9.5,-0.75+6.25) {};
  \fill[black] (28) circle (1pt);
  \node (29) at (1.5-9.5,-0.75+6.25) {};
  \fill[black] (29) circle (1pt);
 \node (21) at (2-9.5,-0.75+6.25) {};
 \fill[black] (21) circle (1pt);
 \node (25) at (2.5-9.5,-0.75+6.25) {};
 \node (40) at (2.25-9.5,-0.75+6.25) {...};
 \fill[black] (25) circle (1pt);
 \node (39) at (2-9.5,-0.25+6.25) {};
 \fill[black] (39) circle (1pt);
 \draw  (12) -- (13) -- (14) -- (39);
 \draw (27) -- (28) -- (29) -- (21);
  %\node (15) at (1.0,-0.37) {\vdots};
  %\node (16) at (0.8,-0.37) {\vdots};
     \node (16) at (1.5-9.5,-0.5+6.15) {.};
    \node (16x) at (1.5-9.5,-0.5+6.25) {.};
    \node (16xx) at (1.5-9.5,-0.5+6.35) {.};
   
    \node (a16) at (1-9.5,-0.5+6.15) {.};
    \node (a16x) at (1-9.5,-0.5+6.25) {.};
    \node (a16xx) at (1-9.5,-0.5+6.35) {.};
   
    \node (b16) at (0.5-9.5,-0.5+6.15) {.};
    \node (b16x) at (0.5-9.5,-0.5+6.25) {.};
    \node (b16xx) at (0.5-9.5,-0.5+6.35) {.};
   
    \node (c16) at (2-9.5,-0.5+6.15) {.};
    \node (c16x) at (2-9.5,-0.5+6.25) {.};
    \node (c16xx) at (2-9.5,-0.5+6.35) {.};
     %\node (16) at (0.4,-0.37) {\vdots};
     %\node (16) at (0.6,-0.37) {\vdots};
 %\node (16) at (0,-1.25) {};
  %\fill[black] (16) circle (1pt);
  \node (17) at (0.5-9.5,-1.25+6.25) {};
  \fill[black] (17) circle (1pt);
  \node (18) at (1-9.5,-1.25+6.25) {};
  \fill[black] (18) circle (1pt);
  \node (19) at (1.5-9.5,-1.25+6.25) {};
  \fill[black] (19) circle (1pt);
  \node (20) at (2-9.5,-1.25+6.25) {};
  \fill[black] (20) circle (1pt);
  \node (22) at (2.25-9.5,-1.25+6.25) {...};
  \node (23) at (2.5-9.5,-1.25+6.25) {};
  \fill[black] (23) circle (1pt);
  \node (24) at (3-9.5,-1.25+6.25) {};
  \fill[black] (24) circle (1pt);
 
  %\node (30) at (0,-1.75) {};
  %\fill[black] (30) circle (1pt);
  %\coordinate[label=right:\tiny{$$}] (M2) at (0.35,-0.35);

	\node[label=left: $\theta\!$] (31) at (0.5-9.5,4.5) {};
  \fill[black] (31) circle (1pt);
%\fill[black] (0.5-9.55,4.45) rectangle (0.5-9.45,4.55);
  \node (32) at (1-9.5,4.5) {};
  \fill[black] (32) circle (1pt);
  \node (33) at (1.5-9.5,4.5) {};
  \fill[black] (33) circle (1pt);
  \node (34) at (2-9.5,4.5) {};
  \fill[black] (34) circle (1pt);
  \node (35) at (2.25-9.5,4.5) {...};
  \node (36) at (2.5-9.5,4.5) {};
  \fill[black] (36) circle (1pt);
  \node (37) at (-6.5,4.5) {};
  \fill[black] (37) circle (1pt);
  \node (38) at (-6,4.5) {};
  \fill[black] (38) circle (1pt);

   %\draw (1)  -- (2);
   %\draw (1)  -- (3);
   \draw (3)  -- (4);
   \draw (4)  -- (5);
   %\draw (2)  -- (6);
   \draw (6)  -- (7);
   \draw (3)  -- (6);
   \draw (4)  -- (7);
   %\draw (2)  -- (8);
   \draw (6)  -- (9);
   %\draw (8)  -- (10);
   %\draw (8)  -- (9);
   %\draw (1)  -- (11) ;
   \draw (3)  -- (12) ;
   \draw (4)  -- (13) ;
   \draw (5)  -- (14) ;
    %\draw  (26) -- (16);
   \draw (27) -- (17);
   \draw (28) -- (18);
   \draw  (29) -- (19);
   \draw (20) -- (21);
   \draw (17) -- (18) -- (19) -- (20);
   %\draw (16) -- (30);
   \draw (17) -- (31);
   \draw (18) -- (32);
   \draw (19) -- (33);
   \draw (20) -- (34);
   \draw (31) -- (32) -- (33) -- (34);
   \draw (36) -- (37) -- (38);
   \draw (23) -- (25);
    \draw (23) -- (24);
    \draw (23) -- (36);
    \draw (24) -- (37);

%% BN Teil
  \node (311) at (0.25,0.5 + 4) {};
  \fill[red] (311) circle (1pt);
  \node (411) at (0.25,1+ 4) {};
  \fill[red] (411) circle (1pt);
  \node (511) at (0.25,1.5+ 4) {};
  \fill[black] (511) circle (1pt);
  \node (611) at (0.75,0.5+ 4) {};
  \fill[red] (611) circle (1pt);
  \node (711) at (0.75,1+ 4) {};
  \fill[red] (711) circle (1pt);
   %\node (8) at (0+ 4,1.5) {};
  %\fill[black] (8) circle (1pt);
  \node[label=right: $\!\theta$] (911) at (1.25,0.5+ 4) {};
  \fill[red] (911) circle (1pt);
  %\node (10) at (0+ 4,2) {};
  %\fill[black] (10) circle (1pt);
 
  %\node (11) at (0+ 4,0) {};
  %\fill[black] (11) circle (1pt);
  \node (1211) at (-0.25,0.5+ 4) {};
  \fill[red] (1211) circle (1pt);
  \node (1311) at (-0.25,1+ 4) {};
 \fill[red] (1311) circle (1pt);
  \node (1411) at (-0.25,1.5+ 4) {};
  \fill[black] (1411) circle (1pt);
  %\node (26) at (0+ 4,-0.75) {};
 %\fill[black] (26) circle (1pt);
  \node (2711) at (-0.75,0.5+ 4) {};
  \fill[red] (2711) circle (1pt);
  \node (2811) at (-0.75,1+ 4) {};
  \fill[red] (2811) circle (1pt);
  \node (2911) at (-0.75,1.5+ 4) {};
  \fill[black] (2911) circle (1pt);
 \node (2111) at (-0.75,2+ 4) {};
 \fill[black] (2111) circle (1pt);
 \node (2511) at (-0.75,2.5+ 4) {};

 \node (4011) at (-0.75,2.15+ 4) {.};
\node (4011x) at (-0.75,2.25+ 4) {.};
\node (4011xx) at (-0.75,2.35+ 4) {.};

 \fill[black] (2511) circle (1pt);
 \node (3911) at (-0.250,2+ 4) {};
 \fill[black] (3911) circle (1pt);
 \draw  (1311) -- (1411) -- (3911);
\draw[red]  (1211) -- (1311);
 \draw (2811) -- (2911) -- (2111);
  %\node (15) at (1.0+ 4,-0.37) {\vdots};
  %\node (16) at (0.8+ 4,-0.37) {\vdots};
     \node (1611) at (-0.5,0.5+ 4) {\textcolor{red}{...}};
    \node (1611x) at (-0.5,2+ 4) {...};
    \node (1611xx) at (-0.5,1.5+ 4) {...};
    \node (1611xxx) at (-0.5,1+ 4) {\textcolor{red}{...}};
     %\node (16) at (0.4+ 4,-0.37) {\vdots};
     %\node (16) at (0.6+ 4,-0.37) {\vdots};
 %\node (16) at (0+ 4,-1.25) {};
  %\fill[black] (16) circle (1pt);
  \node (1711) at (-1.25,0.5+ 4) {};
  \fill[red] (1711) circle (1pt);
  \node (1811) at (-1.25,1+ 4) {};
  \fill[red] (1811) circle (1pt);
  \node (1911) at (-1.25,1.5+ 4) {};
  \fill[black] (1911) circle (1pt);
  \node (2011) at (-1.25,2+ 4) {};
  \fill[black] (2011) circle (1pt);
   
  \node (2211) at (-1.25,2.15+ 4) {.};
    \node (2211x) at (-1.25,2.25+ 4) {.};
    \node (2211xx) at (-1.25,2.35+ 4) {.};
   
  \node (2311) at (-1.25,2.5+ 4) {};
  \fill[black] (2311) circle (1pt);
  \node (2411) at (-1.25,3+ 4) {};
  \fill[black] (2411) circle (1pt);
 
  %\node (30) at (0+ 4,-1.75) {};
  %\fill[black] (30) circle (1pt);
  \node (3111) at (-1.75,0.5+ 4) {};
  \fill[red] (3111) circle (1pt);
  \node (3211) at (-1.75,1+ 4) {};
  \fill[red] (3211) circle (1pt);
  \node (3311) at (-1.75,1.5+ 4) {};
  \fill[black] (3311) circle (1pt);
  \node (3411) at (-1.75,2+ 4) {};
  \fill[black] (3411) circle (1pt);
   
  \node (3511) at (-1.75,2.15+ 4) {.};
    \node (3511xx) at (-1.75,2.25+ 4) {.};
    \node (3511xxx) at (-1.75,2.35+ 4) {.};
   
  \node (3611) at (-1.75,2.5+ 4) {};
  \fill[black] (3611) circle (1pt);
  \node (3711) at (-1.75,3+ 4) {};
  \fill[black] (3711) circle (1pt);
  \node (3811) at (-1.75,3.5+ 4) {};
  \fill[black] (3811) circle (1pt);
 
 \node (17112) at (-2.25,0.5+ 4) {};
  \fill[red] (17112) circle (1pt);
  \node (18112) at (-2.25,1+ 4) {};
  \fill[red] (18112) circle (1pt);
  \node (19112) at (-2.25,1.5+ 4) {};
  \fill[black] (19112) circle (1pt);
  \node (20112) at (-2.25,2+ 4) {};
  \fill[black] (20112) circle (1pt);
   
  \node (22112x) at (-2.25,2.15+ 4) {.};
    \node (22112xx) at (-2.25,2.25+ 4) {.};
    \node (22112xxx) at (-2.25,2.35+ 4) {.};
   
  \node (23112) at (-2.25,2.5+ 4) {};
  \fill[black] (23112) circle (1pt);
  \node (24112) at (-2.25,3+ 4) {};
  \fill[black] (24112) circle (1pt);

\node (27112) at (-2.75,0.5+ 4) {};
  \fill[red] (27112) circle (1pt);
  \node (28112) at (-2.75,1+ 4) {};
  \fill[red] (28112) circle (1pt);
  \node (29112) at (-2.75,1.5+ 4) {};
  \fill[black] (29112) circle (1pt);
 \node (21112) at (-2.75,2+ 4) {};
 \fill[black] (21112) circle (1pt);
 \node (25112) at (-2.75,2.5+ 4) {};
\fill[black] (25112) circle (1pt);
\node (40112) at (-2.75,2.15+ 4) {.};
\node (40112x) at (-2.75,2.25+ 4) {.};
\node (40112xx) at (-2.75,2.35+ 4) {.};

\node (3112) at (-3.75,0.5 + 4) {};
  \fill[red] (3112) circle (1pt);
  \node (4112) at (-3.75,1+ 4) {};
  \fill[red] (4112) circle (1pt);
  \node (5112) at (-3.75,1.5+ 4) {};
  \fill[black] (5112) circle (1pt);
  \node (6112) at (-4.25,0.5+ 4) {};
  \fill[red] (6112) circle (1pt);
  \node (7112) at (-4.25,1+ 4) {};
  \fill[red] (7112) circle (1pt);
   %\node (8) at (0+ 4,1.5) {};
  %\fill[black] (8) circle (1pt);
  \node (9112) at (-4.25,0.5+ 4) {};
  \fill[red] (9112) circle (1pt);
  %\node (10) at (0+ 4,2) {};
  %\fill[black] (10) circle (1pt);
 
  %\node (11) at (0+ 4,0) {};
  %\fill[black] (11) circle (1pt);
  \node (12112) at (-3.25,0.5+ 4) {};
  \fill[red] (12112) circle (1pt);
  \node (13112) at (-3.25,1+ 4) {};
 \fill[red] (13112) circle (1pt);
  \node (14112) at (-3.25,1.5+ 4) {};
  \fill[black] (14112) circle (1pt);

\node (39112) at (-3.25,2+ 4) {};
 \fill[black] (39112) circle (1pt);
 \draw  (13112) -- (14112) -- (39112);
\draw[red]  (12112) -- (13112);
 \draw (28112) -- (29112) -- (21112);
\draw[red] (27112) -- (28112);
  %\node (15) at (1.0+ 4,-0.37) {\vdots};
  %\node (16) at (0.8+ 4,-0.37) {\vdots};
   
    \node (16112) at (-3,1+ 4) {\textcolor{red}{...}};
     \node (16112x) at (-3,1.5+ 4) {...};
    \node (16112xx) at (-3,2+ 4) {...};
    \node (16112xxx) at (-3,0.5+ 4) {\textcolor{red}{...}};
   
\node (9112) at (-4.75,0.5+ 4) {};
 \fill[black] (9112) circle (1pt);
  %  \fill[black] (-4.8,0.5+ 3.95) rectangle (-4.7,0.5+ 4.05);
   %\draw (1)  -- (2);
   %\draw (1)  -- (3);
   \draw[red] (311)  -- (411);
   \draw (411)  -- (511);
   %\draw (2)  -- (6);
   \draw[red] (611)  -- (711);
   \draw[red] (311)  -- (611);
   \draw[red] (411)  -- (711);
   %\draw (2)  -- (8);
   \draw[red] (611)  -- (911);
   %\draw (8)  -- (10);
   %\draw (8)  -- (9);
   %\draw (1)  -- (11) ;
   \draw[red] (311)  -- (1211) ;
   \draw[red] (411)  -- (1311) ;
   \draw (511)  -- (1411) ;
    %\draw  (26) -- (16);
   \draw[red] (2711) -- (1711);
   \draw[red] (2811) -- (1811);
   \draw  (2911) -- (1911);
   \draw (2011) -- (2111);
   \draw[red] (1711) -- (1811);
    \draw[red] (2711) -- (2811);
    \draw (1811) -- (1911) -- (2011);
   %\draw (16) -- (30);
   \draw[red] (1711) -- (3111);
   \draw[red] (1811) -- (3211);
   \draw (1911) -- (3311);
   \draw (2011) -- (3411);
   \draw[red] (3111) -- (3211);
  \draw (3211) -- (3311) -- (3411);
    \draw (3611) -- (3711) -- (3811);
   \draw (2311) -- (2511);
    \draw (2311) -- (2411);
    \draw (2311) -- (3611);
    \draw (2411) -- (3711);
       
        %\draw (2711) -- (17112);
   %\draw (2811) -- (18112);
   %\draw  (2911) -- (19112);
   %\draw (20112) -- (21112);
   \draw (18112) -- (19112) -- (20112);
    \draw[red] (17112) -- (18112);
   %\draw (16) -- (30);
   \draw[red] (17112) -- (3111);
   \draw[red] (18112) -- (3211);
   \draw (19112) -- (3311);
   \draw (20112) -- (3411);
   %\draw (3211) -- (3311) -- (3411);
   %\draw (3611) -- (3711) -- (3811);
   %\draw (23112) -- (2511);
    \draw (23112) -- (24112);
    \draw (23112) -- (3611);
    \draw (24112) -- (3711);

         \draw[red] (3112)  -- (4112);
   \draw (4112)  -- (5112);
   %\draw (2)  -- (6);
   \draw[red] (6112)  -- (7112);
   \draw[red] (3112)  -- (6112);
   \draw[red] (4112)  -- (7112);
   %\draw (2)  -- (8);
   \draw[black] (6112)  -- (9112);
   %\draw (8)  -- (10);
   %\draw (8)  -- (9);
   %\draw (1)  -- (11) ;
   \draw[red] (3112)  -- (12112) ;
   \draw[red] (4112)  -- (13112) ;
   \draw (5112)  -- (14112) ;
    %\draw  (26) -- (16);
   \draw[red] (27112) -- (17112);
   \draw[red] (28112) -- (18112);
   \draw  (29112) -- (19112);
   \draw (20112) -- (21112);
 
   \draw (23112) -- (25112);
    \draw (23112) -- (24112);

\end{tikzpicture}
\end{center}
\end{example}
\subsection{}\label{dyckfori}
\textit{For the rest of this section we fix $i\in \{1,\dots,n\}$ and let $\lambda=m\omega_i$ for some $m\in \Z_+$}. All roots of type $\tt B_n$ are of the form $\alpha_p+\cdots+\alpha_q$ for some $1\leq p\leq q \leq n$ or of the form $\alpha_p+\cdots+\alpha_{2n-q}+2\alpha_{2n-q+1}+\cdots+2\alpha_n$ for some $1\leq p\leq 2n-q <n$. To keep the notation as simple as possible we define 
$$\alpha_{p,q}:=\begin{cases}\alpha_p+\cdots+\alpha_q,&\text{ if $1\leq p\leq q \leq n$}\\
\alpha_p+\cdots+\alpha_{2n-q}+2\alpha_{2n-q+1}+\cdots+2\alpha_n, & \text{ if $1\leq p\leq 2n-q <n$.}\end{cases}$$
Furthermore, we write $R_{i}^+(\ell)$ for $R^+_{i}\backslash \big(R^+_{i}\cap\{\alpha_{p,q}\mid q>\ell\}\big)$. We call a subset of positive roots $\mathbf p=\{\beta(1),\dots,\beta(k)\}, k\geq 1$ a Dyck path of type 1 if and only if the following two conditions are satisfied
\begin{equation}\label{d1}
\begin{aligned} 
&\bullet \beta(1)=\al_{1,i}, \beta(k)=\al_{i,2n-i-1}\ \mbox{ or }\ \beta(1)=\al_{1,i+1},\beta(k)=\al_{i,2n-i}\\
&\bullet \mbox{ if }\beta(s) =\al_{p,q},\mbox{ then }\beta(s+1) =\al_{p,q+1}\mbox{ or }\beta(s+1) =\al_{p+1,q}.
\end{aligned}
\end{equation} The set of all type 1 Dyck path is denoted by $\mathbf D^{\typ 1}$ and $\mathbf D_1^{\typ 1}$ (resp. $\mathbf  D_2^{\typ 1}$) denotes the subset consisting of all type 1 Dyck paths starting at $\al_{1,i}$ (resp. $\al_{1,i+1}$). Furthermore, we call a subset of positive roots $\mathbf p=\{\beta(1),\dots,\beta(k)\}, k\geq 1$ a Dyck path of type 2 if and only if we can write $\mathbf p=\mathbf p_1\cup \mathbf p_2$ \big($\mathbf p_1=\{\beta_1(1),\dots,\beta_1(k_1)\}, k_1\geq 1, \mathbf p_2=\{\beta_2(1),\dots,\beta_2(k_2)\}, k_2\geq 1$\big) with the following properties:
\begin{equation*}\label{d2}
\begin{aligned}
%\begin{itemize}
%\item 
&\bullet \beta_1(1)=\alpha_{1,i}, \beta_2(1)=\alpha_{2,i} \mbox{ and } \beta_1(k_1)=\alpha_{j,2n-j}, \beta_2(k_2) = \alpha_{j+1,2n-j-1} \mbox{ for some } 1 \leq j< i\\
%\item
&\bullet \mathbf p_1 \mbox{ and } \mathbf p_2 \mbox{ satisfy the second property of } \eqref{d1}\\
%\item $\beta_1(1)=\alpha_{1,i}$ \mbox{ and } $\beta_2(1)=\alpha_{2,i}$
%\item
&\bullet \mathbf p_1\cap \mathbf p_2=\emptyset\\
%\item
%&\bullet \mbox{ If } \beta_2(k_2)=\alpha_{j,2n-j}, \mbox{then } \beta_1(k_1)=\alpha_{j-1,2n-j+1}.
%\end{itemize}
\end{aligned}
\end{equation*}
The first property means that the last root in $\mathbf p_2$ is the upper right neighbour of the last root in $\mathbf p_1$ in the Hasse diagram of $(R^+_i,\leq)$. The set of all type 2 Dyck paths is denoted by $\mathbf D^{\typ 2}$. Summarizing, a type 1 Dyck path is a path in the sense of \cite{FFL11} in a specific area of the Hasse diagram of $(R^+_i,\leq)$ and a type 2 Dyck path can be written as a disjoint union of two single type 1 Dyck paths. For this reason, we call the elements in $\mathbf D^{\typ 2}$ double Dyck paths.
\begin{defn}\label{dyckpath}
We call a subset $\mathbf p$ of positive roots a Dyck path if and only if $\mathbf p\in \mathbf D:=\mathbf D^{\typ 1}\cup \mathbf D^{\typ 2}$.
\end{defn}
Note that $\mathbf D^{\typ 1}=\emptyset$ if $i=n$ and $\mathbf D^{\typ 2}=\emptyset$ if $i=1$ and $\mathbf D^{\typ 2}=\{R^+_2\}$ if $i=2$.
The interpretation of Dyck paths in the Hasse diagram is very helpful. The left figure (resp. right figure) shows the form of a type 1 (resp. type 2) Dyck path.
\begin{center}
\begin{tikzpicture}
%\begin{tikzpicture}
\coordinate[label=left:\tiny{${\beta(1)}$}] (A) at (-2,2);
\coordinate[label=right:\tiny{$$}] (B) at (2,2);
%\coordinate[label=right:\tiny{$$}] (BBB) at (5,2);
\coordinate[label=right:\tiny{$\beta(k)$}] (C) at (2,-2);
\coordinate[label=left:\tiny{$$}] (D) at (-2,-2);
\coordinate[label=above:\tiny{$\beta(2)$}] (E) at (-1.4,2);
\coordinate[label=below:\tiny{$\beta(3)$}] (F) at (-1.4,1.4);
\coordinate[label=right:\tiny{$\beta(4)$}] (G) at (-0.8,1.4);
\coordinate[label=below:\tiny{$\beta(5)$}] (H) at (-0.8,0.6);
\coordinate[label=right:\tiny{$\beta(6)$}] (I) at (-0.0,0.6);
\coordinate[label=left:\tiny{$$}] (J) at (-0.0,0.0);
\coordinate[label=below:\tiny{$\beta(7)$}] (JJ) at (-0.3,0.0);
\coordinate[label=right:\tiny{$$}] (M1) at (0.15,-0.15);
\coordinate[label=right:\tiny{$$}] (M) at (0.2,-0.2);
\coordinate[label=right:\tiny{$$}] (K) at (0.25,-0.25);
\coordinate[label=right:\tiny{$$}] (L) at (0.3,-0.3);
\coordinate[label=right:\tiny{$$}] (M2) at (0.35,-0.35);

\coordinate[label=below:\tiny{$\beta(k\!-\!1)$}] (N) at (1.5,-2);
\coordinate[label=above:\tiny{$$}] (O) at (1.5,-1);
\coordinate[label=above:\tiny{$\beta(k\!-\!2)$}] (OO) at (1.35,-1);
\coordinate[label=below:\tiny{$\beta(k\!-\!3)$}] (P) at (0.5,-1);
%\coordinate[label=below:\tiny{$_{p_{r-2},q_{r-2}}$}] (P) at (0.5,-1);
\coordinate[label=below:\tiny{$$}] (Q) at (0.5,-0.5);
\draw (A) -- (B);
\draw (C) -- (B);
\draw (C) -- (D);
\draw (A) -- (D);
\draw[thick] (E) -- (A);
\draw[thick] (E) -- (F);
\draw[thick] (F) -- (G);
\draw[thick] (H) -- (G);
\draw[thick] (H) -- (I);
\draw[thick] (J) -- (I);
\draw[thick] (N) -- (C);
\draw[thick] (N) -- (O);
\draw[thick] (P) -- (O);
\draw[thick] (P) -- (Q);
\fill (A) circle (1.5pt);
%\fill (B) circle (1.5pt);
%\fill (BBB) circle (1.5pt);
\fill (C) circle (1.5pt);
%\fill (D) circle (1.5pt);
\fill (E) circle (1.5pt);
\fill (F) circle (1.5pt);
\fill (G) circle (1.5pt);
\fill (H) circle (1.5pt);
\fill (I) circle (1.5pt);
\fill (J) circle (1.5pt);
\fill (K) circle (0.5pt);
\fill (L) circle (0.5pt);
\fill (M) circle (0.5pt);
\fill (M1) circle (0.5pt);
\fill (M2) circle (0.5pt);
\fill (N) circle (1.5pt);
\fill (O) circle (1.5pt);
\fill (P) circle (1.5pt);
\fill (Q) circle (1.5pt);
%\end{tikzpicture}
%\node (1) at (2,2) {\tiny{$\al_{i,i+1}$}};
%\node (2) at (2,-2) {\tiny{$\al_{i,2n-i}$}};
%\node (3) at (-2,2) {\tiny{$\al_{1,i+1}$}};
%\node (4) at (-2,-2) {\tiny{$\al_{1,2n-i}$}};
%\node (5) at (-1.3,1.5) {\tiny{$\al_{p_1,q_1}$}};
%\node (6) at (-1.3,2) {};
%\node (7) at (-0.8,1) {\tiny{{$p_2,q_2$}}};
%
%
%\draw (1) -- (3);
%\draw (2) -- (4);
%\draw (4) -- (3);
%\draw (2) -- (1);
%\draw (3) -- (6);
%\draw (6) -- (5);
\coordinate[label=left:\tiny{$\beta_1(1)$}] (AA) at (-2+7,2);
\coordinate[label=right:\tiny{$$}] (BB) at (3.1+7,2);
\coordinate[label=right:\tiny{$$}] (CC) at (3.1+7,-0.9);
%\coordinate[label=left:\tiny{$_{1,n}$}] (ZZ) at (-2,0);
\coordinate[label=left:\tiny{$$}] (DD) at (-2+7,-6);
\coordinate[label=above:\tiny{$\beta_2(1)$}] (EE) at (-1.6+7,2);
\coordinate[label=above right:\tiny{$\beta_2(2)$}] (EFEF) at (-1.1+7,2);
\coordinate[label=left:\tiny{$\beta_2(3)$}] (FF) at (-1.1+7,1.4);
\coordinate[label=right:\tiny{$\beta_2(4)$}] (GG) at (-0.8+7,1.4);
\coordinate[label=below:\tiny{$\beta_2(5)$}] (HH) at (-0.8+7,0.6);
\coordinate[label=right:\tiny{$\beta_2(6)$}] (II) at (-0.0+7,0.6);
\coordinate[label=left:\tiny{$$}] (JJ) at (-0.0+7,0.0);
\coordinate[label=below:\tiny{$\beta_2(7)$}] (JJJJ) at (-0.3+7,0.0);
\coordinate[label=right:\tiny{$$}] (M1M1) at (0.15+7,-0.15);
\coordinate[label=right:\tiny{$$}] (MM) at (0.2+7,-0.2);
\coordinate[label=right:\tiny{$$}] (KK) at (0.25+7,-0.25);
\coordinate[label=right:\tiny{$$}] (LL) at (0.3+7,-0.3);
\coordinate[label=right:\tiny{$$}] (M2M2) at (0.35+7,-0.35);

\coordinate[label=above right:\tiny{$$}] (NN) at (2+7,-1);
\coordinate[label=above right:\tiny{$\beta_2(k_2\!-\!1)$}] (NNNN) at (1.75+7,-1);
\coordinate[label=right:\tiny{$$}] (OO) at (1.5+7,-1);
\coordinate[label=above:\tiny{$$}] (OOOO) at (1.35+7,-1);
\coordinate[label=above right:\tiny{$$}] (PP) at (0.5+7,-1);
\coordinate[label=above right:\tiny{$\beta_2(k_2\!-\!2)$}] (PPPP) at (0.5+7,-1);
%\coordinate[label=below:\tiny{$_{p_{r-2},q_{r-2}}$}] (P) at (0.5,-1);
\coordinate[label=above right:\tiny{$$}] (QQ) at (0.5+7,-0.5);
\coordinate[label=above right:\tiny{$\beta_2(k_2\!-\!3)$}] (QQQQ) at (0.5+7,-0.6);
\coordinate[label=right:\tiny{$\beta_2(k_2) $}] (RR) at (2+7,-2);

\coordinate[label=left:\tiny{$\beta_1(2)$}] (SS) at (-2+7,0.8);
\coordinate[label=above right:\tiny{$$}] (TT) at (-1.6+7,0.8);
\coordinate[label=above right:\tiny{$\beta_1(3)$}] (TTTT) at (-1.65+7,0.7);
\coordinate[label=below:\tiny{$\beta_1(4)$}] (UU) at (-1.6+7,-0.2);
\coordinate[label=above:\tiny{$\beta_1(5)$}] (VV) at (-1+7,-0.2);
\coordinate[label=left:\tiny{$\beta_1(6)$}] (WW) at (-1+7,-0.8);
\coordinate[label=left:\tiny{$$}] (W1W1) at (-0.8+7,-1);
\coordinate[label=left:\tiny{$$}] (X1X1) at (-0.4+7,-1.4);
\coordinate[label=below:\tiny{$\beta_1(k_1\!-\!3)$}] (XX) at (-0.2+7,-1.6);
\coordinate[label=above:\tiny{$\beta_1(k_1\!-\!2)$}] (YY) at (0.7+7,-1.6);
\coordinate[label=below:\tiny{$\beta_1(k_1\!-\!1)$}] (ZZ) at (0.7+7,-2.3);
\coordinate[label=right:\tiny{$$}] (Z1Z1) at (1.7+7,-2.3);
\coordinate[label=right:\tiny{$\beta_1(k_1)$}] (Z2Z2) at (1.6+7,-2.5);

\draw (AA) -- (BB);
\draw (CC) -- (BB);
\draw (CC) -- (DD);
\draw (AA) -- (DD);
%\draw[thick] (E) -- (A);
\draw[thick] (EE) -- (EFEF);
\draw[thick] (FF) -- (EFEF);
\draw[thick] (FF) -- (GG);
\draw[thick] (HH) -- (GG);
\draw[thick] (HH) -- (II);
\draw[thick] (JJ) -- (II);
\draw[thick] (NN) -- (RR);
\draw[thick] (NN) -- (OO);
\draw[thick] (PP) -- (OO);
\draw[thick] (PP) -- (QQ);
\fill (AA) circle (1.5pt);
%\fill (B) circle (1.5pt);
%\fill (C) circle (1.5pt);
%\fill (D) circle (1.5pt);
\fill (EE) circle (1.5pt);
\fill (EFEF) circle (1.5pt);
\fill (FF) circle (1.5pt);
\fill (GG) circle (1.5pt);
\fill (HH) circle (1.5pt);
\fill (II) circle (1.5pt);
\fill (JJ) circle (1.5pt);
\fill (KK) circle (0.5pt);
\fill (LL) circle (0.5pt);
\fill (MM) circle (0.5pt);
\fill (M1M1) circle (0.5pt);
\fill (M2M2) circle (0.5pt);
\fill (NN) circle (1.5pt);
%\fill (O) circle (1.5pt);
\fill (PP) circle (1.5pt);
\fill (QQ) circle (1.5pt);
%\fill (ZZ) circle (1.5pt);
\fill (RR) circle (1.5pt);

\draw[thick] (SS) -- (AA);
\draw[thick] (SS) -- (TT);
\draw[thick] (UU) -- (TT);
\draw[thick] (UU) -- (VV);
\draw[thick] (WW) -- (VV);
\draw[thick, dotted] (W1W1) -- (X1X1);
\draw[thick] (XX) -- (YY);
\draw[thick] (YY) -- (ZZ);
\draw[thick] (Z1Z1) -- (ZZ);

\fill (SS) circle (1.5pt);
\fill (TT) circle (1.5pt);
\fill (UU) circle (1.5pt);
\fill (VV) circle (1.5pt);
\fill (WW) circle (1.5pt);
\fill (XX) circle (1.5pt);
\fill (YY) circle (1.5pt);
\fill (ZZ) circle (1.5pt);
\fill (Z1Z1) circle (1.5pt);
%\end{tikzpicture}
%\node (1) at (2,2) {\tiny{$\al_{i,i+1}$}};
%\node (2) at (2,-2) {\tiny{$\al_{i,2n-i}$}};
%\node (3) at (-2,2) {\tiny{$\al_{1,i+1}$}};
%\node (4) at (-2,-2) {\tiny{$\al_{1,2n-i}$}};
%\node (5) at (-1.3,1.5) {\tiny{$\al_{p_1,q_1}$}};
%\node (6) at (-1.3,2) {};
%\node (7) at (-0.8,1) {\tiny{{$p_2,q_2$}}};
%
%
%\draw (1) -- (3);
%\draw (2) -- (4);
%\draw (4) -- (3);
%\draw (2) -- (1);
%\draw (3) -- (6);
%\draw (6) -- (5);
\end{tikzpicture}
\end{center}
\begin{example}
We list all Dyck paths for $\tt B_4$, $i=3$. We have 
\begin{align*}\mathbf D^{\typ 1}=\Big\{ &\{\al_{1,3},\al_{2,3}, \al_{3,3},\al_{3,4}\} , \{\al_{1,3},\al_{2,3}, \al_{2,4},\al_{3,4}\}, \{\al_{1,3},\al_{1,4}, \al_{2,4},\al_{3,4}\}, \{\al_{1,4},\al_{2,4}, \al_{3,4},\al_{3,5}\},&\\&\{\al_{1,4},\al_{2,4}, \al_{2,5},\al_{3,5}\}, \{\al_{1,4},\al_{1,5}, \al_{2,5},\al_{3,5}\} \Big\}.\end{align*}
\begin{align*}\mathbf D^{\typ 2}=\Big\{ &\{\al_{1,3},\al_{2,3}, \al_{1,4},\al_{2,4},\al_{1,5},\al_{2,5},\al_{1,6},\al_{2,6},\al_{1,7}\},\{\al_{1,3},\al_{2,3},\al_{1,4},\al_{2,4},\al_{1,5},\al_{2,5},\al_{1,6},\al_{2,6},\al_{3,5}\},&\\&\{\al_{1,3},\al_{2,3}, \al_{1,4},\al_{2,4},\al_{1,5},\al_{3,4},\al_{1,6},\al_{2,6},\al_{3,5}\},\{\al_{1,3},\al_{2,3}, \al_{1,4},\al_{3,3},\al_{1,5},\al_{3,4},\al_{1,6},\al_{2,6},\al_{3,5}\}, &\\&\{\al_{1,3},\al_{2,3}, \al_{1,4},\al_{3,3},\al_{1,5},\al_{3,4},\al_{2,5},\al_{2,6},\al_{3,5}\}, \{\al_{1,3},\al_{2,3}, \al_{1,4},\al_{2,4},\al_{1,5},\al_{3,4},\al_{2,5},\al_{2,6},\al_{3,5}\},&\\&\{\al_{1,3},\al_{2,3}, \al_{1,4},\al_{2,4},\al_{3,3},\al_{3,4},\al_{2,5},\al_{2,6},\al_{3,5}\} \Big\}.\end{align*}
%In the case of $B_5$, $i=2$ we have $$\mathbf D^{\typ 2} = \{ \{\al_{1,2},\al_{1,3}, \al_{1,4}, \al_{1,5}, \al_{1,6}, \al_{1,7}, \al_{1,8}, \al_{1,9}, \al_{2,2},\al_{2,3}, \al_{2,4}, \al_{2,5}, \al_{2,6}, \al_{2,7}, \al_{2,8}  \} \}.$$
\end{example}

The corresponding polytope is defined by

\begin{equation}\label{polyi}P(\mathbf D, m\omega_i)=\Big\{\mathbf s=(s_{\beta})\in \R_+^{|R_i^+|}\mid \forall \mathbf p\in \mathbf D: \sum_{\beta\in \mathbf p} s_{\beta} \leq M_{\mathbf p}(m\omega_i) \Big\},\end{equation}
where we set
$$M_{\mathbf p}(m\omega_i)=\begin{cases}m&\text{ if $\mathbf p \in \mathbf D^{\typ 1}$}\\
m\omega_i(\theta^{\vee})&\text{ if $\mathbf p \in \mathbf D^{\typ 2}$}\end{cases}$$
We consider the polytope $P(\mathbf D,m\omega_i)$ as a subset of $\R_+^{|R^+|}$ by requiring $s_{\beta}=0$ for $\beta\in R^+\backslash R^+_i$.
\begin{rem}
Note that the set $\mathbf D$ is a subset of $\mathcal P(R_i^+)$ and depends therefore on $i$ (unlike as in the $\tt A_n$, $\tt C_n$ and $\tt G_2$ case). We do not expect that there exists a set $\mathbf D^{'}\subset \mathcal P(R^+)$ such that the following holds: for any dominant integral weight $\mu$ there exists non--negative integers $M_{\mathbf p}(\mu)$ ($\mathbf p\in \mathbf D^{'}$) such that the integral points of the corresponding polytope \eqref{polyaundcund} parametrize a basis of $\gr V(\mu)$. We rather expect that there exists a polytope parametrizing a basis of the associated graded space where the coefficients of the describing inequalities might be greater than 1. We will demonstrate this in the $\tt B_3$ case (see Section~\ref{section5}).
%and non--negative integers $M_{\mathbf p}(\lambda)$ for any $\mathbf p\in \mathbf D^{'}$ arbitrary $\lambda$ 
\end{rem}
\subsection{} For $\mathbf s\in S(\mathbf D,m\omega_i)$ let $\wt(\mathbf s):=\sum_{\beta\in R_i^+}s_{\beta}\beta$ and $$S(\mathbf D,m\omega_i)^{\mu}=\{\mathbf s\in S(\mathbf D,m\omega_i)\mid m\omega_i-\wt(\mathbf s)=\mu\}.$$
We make the following conjecture and prove various cases in this paper. We set $\epsilon_i=1$ if $i\leq 2$ and $\epsilon_i=2$ else.
\begin{conj}\label{conjbasistypeB}
Let $\lie g$ be the Lie algebra of type $\tt B_n$ and $1\leq i \leq n$. 
\begin{enumerate}
\item The lattice points $S(\mathbf D, m\omega_i)$ parametrize a basis of $V(m\omega_i)$ and $\gr V(m\omega_i)$ respectively. In particular, 
$$\{\X^{\mathbf s}v_{m\omega_i}\mid \mathbf s \in S(\mathbf D, m\omega_i)\}$$
forms a basis of $\gr V(m\omega_i)$.
\item We have
$$\mathbf I_{m\omega_i}=S(\lie n^-)\Big(\bu(\lie n^+)\circ \spa\big\{x^{\omega_i(\theta^{\vee})m+1}_{-\theta},x^{m+1}_{-\alpha_{1,2n-i}},x_{-\beta}\mid \beta\in R^+\backslash R^+_i\big\}\Big).$$
\item The character and graded $q$-character respectively is given by
$$\cha V(m\omega_i)=\sum_{\mu\in \lie h^{*}}|S(\mathbf D,m\omega_i)^{\mu}|e^{\mu}$$
$$\cha_q \gr V(m\omega_i)=\sum_{\mathbf s\in S(\mathbf D,m\omega_i)}e^{m\omega_i-\wt(\mathbf s)}q^{\sum s_{\beta}}.$$
\item We have an isomorphism of $S(\lie n^-)$--modules for all $\ell\in \Z_+$:
$$\gr V\big((m+\epsilon_i\ell)\omega_i\big)\cong S(\lie n^-)(v_{m\omega_i}\otimes v_{\epsilon_i\ell \omega_i})\subseteq \gr V(m\omega_i)\otimes \gr V(\epsilon_i\ell\omega_i).$$
\end{enumerate}
\end{conj}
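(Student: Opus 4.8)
The plan is to run the Feigin--Fourier--Littelmann machinery behind Theorem~\ref{basistypeAandC} with the Dyck paths of Section~\ref{section4} replacing the classical ones, and to deduce everything from assertion~(1). Write $\lambda=m\omega_i$. Since $\lambda(\beta^\vee)=0$ for $\beta\in R^+\setminus R_i^+$, the relations~\eqref{simplerelations} force $s_\beta=0$ for all such $\beta$, so by Proposition~\ref{spanningset1} one may work inside $S\big(\bigoplus_{\beta\in R_i^+}\Lg_{-\beta}\big)$ throughout. For (1) I would use a squeeze: first that $\{\X^{\mathbf s}v_\lambda\mid \mathbf s\in S(\mathbf D,m\omega_i)\}$ spans $\gr V(\lambda)$, hence $\dim\gr V(\lambda)\le|S(\mathbf D,m\omega_i)|$, and then that $\dim\gr V(\lambda)\ge|S(\mathbf D,m\omega_i)|$; since $\dim\gr V(\lambda)=\dim V(\lambda)$ the two bounds give a basis of $\gr V(\lambda)$, which lifts to one of $V(\lambda)$ (the factors of $\X^{\mathbf s}$ in any order, as in the remark after Theorem~\ref{basistypeAandC}). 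Granting~(1): (3) is the weight bookkeeping of this basis; for (2), the right-hand ideal $\mathbf J$ satisfies $\mathbf J\subseteq\mathbf I_\lambda$ by~\eqref{simplerelations}, while the spanning step below shows $S(\Ln^-)/\mathbf J$ is already spanned by $\{\X^{\mathbf s}\mid\mathbf s\in S(\mathbf D,m\omega_i)\}$, so $\dim S(\Ln^-)/\mathbf J\le|S(\mathbf D,m\omega_i)|=\dim S(\Ln^-)/\mathbf I_\lambda$ (Lemma~\ref{n+structure}) and hence $\mathbf J=\mathbf I_\lambda$; and (4) falls out of the lower bound.

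The spanning step should go through for every $i$, $n$, $m$ (this is the generating-set claim of Section~\ref{section4}). Fix a total order on the monomials $\X^{\mathbf s}$ refining the total degree, as in \cite{FFL11,FFL2011}. Applying suitable elements of $\bu(\Ln^+)$ to the relations $x_{-\theta}^{\,m\omega_i(\theta^\vee)+1}v_\lambda=0$ and $x_{-\alpha_{1,2n-i}}^{\,m+1}v_\lambda=0$ produces, for each $\mathbf p\in\mathbf D$, an element of $\mathbf I_\lambda$ whose leading term is a scalar times a monomial supported on $\mathbf p$ of total degree $M_{\mathbf p}(m\omega_i)+1$; induction on the order then expresses every $\X^{\mathbf s}$ with $\mathbf s\notin S(\mathbf D,m\omega_i)$ through order-smaller monomials. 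For $\mathbf p\in\mathbf D^{\typ 1}$ this is precisely the computation of \cite{FFL11,FFL2011} carried out inside the rectangular sub-Hasse diagram cut out by $R_i^+(\ell)$. For a double path $\mathbf p=\mathbf p_1\cup\mathbf p_2\in\mathbf D^{\typ 2}$ one has to produce, in addition, an element of the $\bu(\Ln^+)$-orbit of $x_{-\theta}^{\,\cdot+1}$ whose leading monomial is spread over both branches $\mathbf p_1\cup\mathbf p_2$ with total degree $m\omega_i(\theta^\vee)+1$; this is the genuinely new input, with no counterpart in the simply-laced or doubly-laced cases.

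For the lower bound I would first prove the Minkowski identity $S\big((k+\epsilon_i\ell)\omega_i\big)=S(k\omega_i)+S(\epsilon_i\ell\omega_i)$ for $k,\ell\in\Z_+$: the inclusion $\supseteq$ is immediate from the defining inequalities, and $\subseteq$ requires an explicit ``greedy'' decomposition of a lattice point respecting all type~$1$ and type~$2$ constraints simultaneously, the step size $\epsilon_i$ ($=2$ for $i\ge 3$) being forced because $S(\omega_i)+S(\omega_i)$ is already too small there, as seen for $\tt B_4$ in the introduction. Then consider the $S(\Ln^-)$-module map
\[\psi\colon\gr V\big((m+\epsilon_i\ell)\omega_i\big)\longrightarrow\gr V(m\omega_i)\otimes\gr V(\epsilon_i\ell\omega_i),\qquad v_{(m+\epsilon_i\ell)\omega_i}\longmapsto v_{m\omega_i}\otimes v_{\epsilon_i\ell\omega_i},\]
well defined because in the tensor product each term of $x_{-\beta}^{\,(m+\epsilon_i\ell)\omega_i(\beta^\vee)+1}(v_{m\omega_i}\otimes v_{\epsilon_i\ell\omega_i})$ has one tensor factor that is a too-high power of $x_{-\beta}$ and so vanishes. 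Arguing by induction on $m$ in steps of $\epsilon_i$, with base cases $V(\omega_i)$ and, when $\epsilon_i=2$, also $V(2\omega_i)$ checked directly from the polytopes of Sections~\ref{section4}--\ref{section5} in the ranges $i\le 3$ or $n\le 4$, the inductive hypothesis supplies PBW bases of the two tensor factors indexed by $S(\mathbf D,m\omega_i)$ and $S(\mathbf D,\epsilon_i\ell\omega_i)$; combining the Minkowski identity with a leading-term argument for the coproduct $x_{-\beta}\mapsto x_{-\beta}\otimes 1+1\otimes x_{-\beta}$ shows that $\{\psi(\X^{\mathbf s}v_{(m+\epsilon_i\ell)\omega_i})\mid\mathbf s\in S(\mathbf D,(m+\epsilon_i\ell)\omega_i)\}$ is linearly independent, so $\dim\gr V((m+\epsilon_i\ell)\omega_i)\ge|S(\mathbf D,(m+\epsilon_i\ell)\omega_i)|$. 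This closes the squeeze for~(1) and simultaneously forces $\psi$ to be injective, which is~(4).

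The main obstacle, I expect, is the double Dyck paths in both steps: exhibiting, for every $\mathbf p\in\mathbf D^{\typ 2}$, an explicit consequence of the defining relations that realizes the bound $m\omega_i(\theta^\vee)+1$ on $\mathbf p$, and proving that the Minkowski decomposition $S((k+\epsilon_i\ell)\omega_i)=S(k\omega_i)+S(\epsilon_i\ell\omega_i)$ can be carried out compatibly with these extra inequalities. It is precisely the loss of control over these double paths when $i\ge 4$ and $n\ge 4$ that confines the argument — and keeps Conjecture~\ref{conjbasistypeB} open — to the ranges $i\le 3$ or $n\le 4$.
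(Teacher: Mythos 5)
Your plan is essentially the paper's own argument: it is the reduction of Lemma~\ref{reductlem} (spanning of $S(\lie n^-)/\mathbf I_{m\omega_i}$ by $\{\X^{\mathbf s}\mid\mathbf s\in S(\mathbf D,m\omega_i)\}$ obtained by applying differential operators from $\bu(\lie n^+)$ to the powers of $x_{-\theta}$ and $x_{-\alpha_{1,2n-i}}$, the Minkowski property $S(\mathbf D,(k+\epsilon_i\ell)\omega_i)=S(\mathbf D,k\omega_i)+S(\mathbf D,\epsilon_i\ell\omega_i)$, base cases for $\ell\leq\epsilon_i$, and the tensor-product/Cartan-component induction giving the lower bound and parts (2)--(4) from part (1)), with the same restriction of the complete proof to $i\leq 3$ or $n\leq 4$. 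The only cosmetic difference is that you phrase the spanning step via a single leading monomial of degree $M_{\mathbf p}+1$ per path, whereas the paper (Proposition~\ref{spanprop2}) produces, for each violating multi--exponent supported on a path, an element of $\mathbf I_{\lambda}$ with exactly that monomial as leading term; this is the form one actually needs and is what your differential-operator computation would yield.
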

\subsection{}\label{induordm}
In this subsection we will reduce the proof of Conjecture~\ref{conjbasistypeB} to a technical lemma (see Lemma~\ref{reductlem}). For this we will need the notion of essential monomials which is due to Vinberg \cite{V05}. We fix an ordered basis $\{x_1,\dots,x_N\}$ of $\mathfrak{n}^-$ and an induced homogeneous monomial order $<$ on the monomials in $\{x_1,\dots,x_N\}$. Let $M$ be any finite--dimensional cyclic $\mathbf U(\mathfrak{n}^-)$--module with cyclic vector $v_M$ and let
$$\X^{\bs}v_M=x_1^{s_1}\dots x_N^{s_N}v_M\in M,$$
where $\bs\in\Z_+^N$ is a multi--exponent.
\begin{defn}
A pair $(M,\bs)$ is called essential if 
$$\X^{\bs}v_M\notin \spa\{\X^{\bq}v_M\mid \bq<\bs\}.$$
\end{defn}
If the pair $(M,\bs)$ is essential, then $\bs$ is called an essential multi--exponent and $\X^{\bs}$ is called an essential monomial in $M$. Note that the set of all essential monomials, denoted by ${\rm es}(M)\subseteq \Z_+^N$, parametrizes a basis of $M$.
\begin{lem}\label{reductlem}
The proof of Conjecture~\ref{conjbasistypeB} can be reduced to the following three statements:
\renewcommand{\theenumi}{\roman{enumi}}%
\begin{enumerate}
\item If $\mathbf{s}\notin S(\mathbf D, m\omega_i)$, then the following is true in $S(\lie n^-)/\mathbf I_{m\omega_i}$
$$\X^{\mathbf s}\in \text{span}\{\X^{\mathbf q}\mid \mathbf q < \mathbf s \}.$$
Hence $\{\X^{\mathbf s} \mid \mathbf s \in S(\mathbf D, m\omega_i)\}$ generates the module $S(\lie n^-)/\mathbf I_{m\omega_i}$. 
\item We have
$$S(\mathbf D,(m+\epsilon_i\ell)\omega_i)=S(\mathbf D,m\omega_i)+S(\mathbf D,\epsilon_i\ell\omega_i).$$
\item We have
$${\rm es}(V(\ell\omega_i))=S(\mathbf D,\ell\omega_i) \mbox{ for $\ell\leq \epsilon_i$.}$$
\end{enumerate}
\begin{proof}
Assume that part (1) of the conjecture holds. Part (3) of the conjecture follows immediately from part (1). Since $\mathbf I_{m\omega_i}v_{m\omega_i}=0$, we have a surjective map 
$$S(\lie n^-)/\mathbf I_{m\omega_i}\longrightarrow \gr V(m\omega_i)$$ and hence part (2) of the conjecture follows with part (1) and (i). It has been shown in  \cite[Proposition 3.7]{FFL2011} (cf. also \cite[Proposition 1.11]{FFL13}) that if $\{\X^{\mathbf s}v_{\lambda}\mid \mathbf s\in S(\mathbf D,\lambda)\}$ is a basis of $\gr V(\lambda)$ and $\{\X^{\mathbf s}v_{\mu}\mid \mathbf s\in S(\mathbf D,\mu)\}$ is a basis of $\gr V(\mu)$, then $\big\{\X^{\mathbf s}(v_{\lambda}\otimes v_{\mu}), \mathbf s\in S(\mathbf D,\lambda)+S(\mathbf D,\mu)\big\}$ is a linearly independent subset of $\gr V(\lambda)\otimes \gr V(\mu)$ and therefore also a linearly independent subset of $V(\lambda)\otimes V(\mu)$. Since we have a surjective map (cf. \cite[Lemma 6.1]{FFL13})
$$S(\lie n^-)/\mathbf I_{(m+\epsilon_i\ell)\omega_i}\cong\gr V((m+\epsilon_i\ell)\omega_i)\longrightarrow S(\lie n^-)(v_{m\omega_i}\otimes v_{\epsilon_i\ell\omega_i})\subseteq \gr V(m\omega_i)\otimes \gr V(\epsilon_i\ell\omega_i),$$
part (4) follows from part (1) and (ii). So it remains to prove that part (1) follows from (i)--(iii). If $m\leq \epsilon_i$ this follows from (iii), so let $m>\epsilon_i$. By induction we can suppose that $S(\mathbf D,(m-\epsilon_i)\omega_i)$ parametrizes a basis of $\gr V((m-\epsilon_i)\omega_i)$ and by (i) and (iii) that $S(\mathbf D,\epsilon_i\omega_i)$ parametrizes a basis of $\gr V(\epsilon_i\omega_i)$. Thus, together with (ii), we obtain similar as above that $\big\{\X^{\mathbf s}(v_{(m-\epsilon_i)\omega_i}\otimes v_{\epsilon_i\omega_i}), \mathbf s\in S(\mathbf D,m\omega_i)\big\}$ is a linearly independent subset of $V((m-\epsilon_i)\omega_i)\otimes  V(\epsilon_i\omega_i)$. Since $V(m\omega_i)\cong \mathbf U(\lie n^-)(v_{(m-\epsilon_i)\omega_i}\otimes v_{\epsilon_i\omega_i})$ and $\dim V(m\omega_i)=\dim \gr V(m \omega_i)$ part (1) follows.
\end{proof}
\end{lem}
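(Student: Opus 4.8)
The plan is to deduce the four parts of Conjecture~\ref{conjbasistypeB} from (i)--(iii) by an induction on $m$ organized around tensor product embeddings, so that all the genuine work is pushed into (i)--(iii). First I would read off from (i) that $\{\X^{\mathbf s}\mid \mathbf s\in S(\mathbf D,m\omega_i)\}$ spans $S(\lie n^-)/\mathbf I_{m\omega_i}$: any $\X^{\mathbf s}$ with $\mathbf s\notin S(\mathbf D,m\omega_i)$ is congruent to a combination of strictly smaller monomials, and a descending induction on the homogeneous monomial order removes all such monomials. Via the surjection of Lemma~\ref{n+structure} the same set spans $\gr V(m\omega_i)$, so $\dim\gr V(m\omega_i)\le |S(\mathbf D,m\omega_i)|$; part (1) thus reduces to the reverse inequality, equivalently to linear independence of the vectors $\X^{\mathbf s}v_{m\omega_i}$, $\mathbf s\in S(\mathbf D,m\omega_i)$, in $V(m\omega_i)$.

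The lower bound is proved by induction on $m$. For $m\le\epsilon_i$, statement (iii) identifies $S(\mathbf D,m\omega_i)$ with ${\rm es}(V(m\omega_i))$, which by definition parametrizes a basis; this settles part (1) for these $m$, and part (3) is then immediate since $\X^{\mathbf s}v_{m\omega_i}$ has weight $m\omega_i-\wt(\mathbf s)$ and PBW-degree $\sum_\beta s_\beta$. For $m>\epsilon_i$, write $m=(m-\epsilon_i)+\epsilon_i$; by the induction hypothesis $S(\mathbf D,(m-\epsilon_i)\omega_i)$ parametrizes a basis of $\gr V((m-\epsilon_i)\omega_i)$, and by (iii) together with the spanning just established, $S(\mathbf D,\epsilon_i\omega_i)$ parametrizes a basis of $\gr V(\epsilon_i\omega_i)$. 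Applying \cite[Proposition 3.7]{FFL2011} (cf.\ \cite[Proposition 1.11]{FFL13}) to these two polytope bases yields a linearly independent family $\{\X^{\mathbf s}(v_{(m-\epsilon_i)\omega_i}\otimes v_{\epsilon_i\omega_i})\mid \mathbf s\in S(\mathbf D,(m-\epsilon_i)\omega_i)+S(\mathbf D,\epsilon_i\omega_i)\}$ inside $\gr V((m-\epsilon_i)\omega_i)\otimes\gr V(\epsilon_i\omega_i)$, hence inside $V((m-\epsilon_i)\omega_i)\otimes V(\epsilon_i\omega_i)$. By (ii) the index set is exactly $S(\mathbf D,m\omega_i)$, and since $V(m\omega_i)\cong\bu(\lie n^-)(v_{(m-\epsilon_i)\omega_i}\otimes v_{\epsilon_i\omega_i})$ and $\dim V(m\omega_i)=\dim\gr V(m\omega_i)$, this produces the required number of linearly independent vectors, completing part (1).

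It remains to extract parts (2) and (4), both by dimension counts along surjections. For part (2): by \eqref{simplerelations} each of the listed elements annihilates $v_{m\omega_i}$, so the $\bu(\lie n^+)$-stable $S(\lie n^-)$-ideal $\mathbf J_{m\omega_i}$ they generate lies in $\mathbf I_{m\omega_i}$, giving a surjection $S(\lie n^-)/\mathbf J_{m\omega_i}\twoheadrightarrow\gr V(m\omega_i)$; the rewriting underlying (i) only ever invokes these relations, so it holds already in $S(\lie n^-)/\mathbf J_{m\omega_i}$, whence the source is spanned by the $|S(\mathbf D,m\omega_i)|=\dim\gr V(m\omega_i)$ monomials indexed by $S(\mathbf D,m\omega_i)$ and the surjection is an isomorphism, i.e.\ $\mathbf I_{m\omega_i}=\mathbf J_{m\omega_i}$. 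For part (4) I would use the surjection $\gr V((m+\epsilon_i\ell)\omega_i)\twoheadrightarrow S(\lie n^-)(v_{m\omega_i}\otimes v_{\epsilon_i\ell\omega_i})$ of \cite[Lemma 6.1]{FFL13}: by part (1) its source has dimension $|S(\mathbf D,(m+\epsilon_i\ell)\omega_i)|$, while by \cite[Proposition 3.7]{FFL2011} and (ii) its target contains $|S(\mathbf D,m\omega_i)+S(\mathbf D,\epsilon_i\ell\omega_i)|=|S(\mathbf D,(m+\epsilon_i\ell)\omega_i)|$ linearly independent vectors, so it too is an isomorphism.

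The step I expect to be the crux is the inductive linear-independence argument: one must check that the hypotheses of the Minkowski-sum criterion of \cite{FFL2011} are genuinely met here (PBW bases of both tensor factors coming from the polytopes $P(\mathbf D,\cdot)$, compatibility with the chosen monomial order), and that the scaling factor $\epsilon_i$ — forced for $i\ge 3$ by the failure of $S(\mathbf D,\omega_i)+S(\mathbf D,\omega_i)=S(\mathbf D,2\omega_i)$ — is handled consistently both in the base case (iii) and in the induction step. The remaining manipulations with surjections and dimensions are formal, which is exactly why the substance of Conjecture~\ref{conjbasistypeB} is concentrated in statements (i)--(iii).
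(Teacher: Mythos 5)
Your proposal is correct and follows essentially the same route as the paper: the base case via (iii), the inductive step combining (ii) with the linear–independence result of \cite[Proposition 3.7]{FFL2011}, the cyclicity $V(m\omega_i)\cong \mathbf U(\lie n^-)(v_{(m-\epsilon_i)\omega_i}\otimes v_{\epsilon_i\omega_i})$ together with $\dim V(m\omega_i)=\dim\gr V(m\omega_i)$, and then parts (2), (3), (4) by the same surjection-plus-dimension-count arguments (using \cite[Lemma 6.1]{FFL13}). The only difference is cosmetic: you distinguish the explicitly generated ideal $\mathbf J_{m\omega_i}$ from the annihilator ideal, which merely makes explicit a notational convention the paper leaves implicit.
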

Therefore it will be enough to prove the above lemma with respect to a choosen order. The first part of the lemma is proved in full generality in Section~\ref{spanprop1} whereas the second part is proved only for several special cases ($1\leq i\leq 3$ and $n$ arbitrary or $i$ arbitrary and $1\leq n\leq 4$) in Section~\ref{zweitehhh}. In order to prove the third part for these special cases it will be enough to show that ${\rm dim} V(\ell\omega_i)=|S(\mathbf D,\ell\omega_i)| \mbox{ for $\ell\leq \epsilon_i$}$, since the first part already implies ${\rm es}(V(m\omega_i))\subseteq S(\mathbf D,m\omega_i)$ for $m\in \mathbb{Z}_+$. The dimension argument is an easy calculation and will be omitted.
\subsection{Proof of Lemma~\ref{reductlem} (i)}\label{spanprop1}
We choose a total order $\prec$ on $R^+$:
\begin{equation*}
\al_{p,q} \prec \al_{s,t} :\Leftrightarrow q < t \mbox{ or } q = t \mbox{ and } p > s.
\end{equation*}
Interpreted in the Hasse diagram this means that we order the roots from the bottom to the top and from left to right. We extend this order to the induced homogeneous reverse lexicographic order on the monomials in $S(\mathfrak{n}^-)$. We order the set of positive roots $R^+=\{\beta_1,\dots,\beta_N\}$ with respect to $\prec$:
$$\beta_N\prec\beta_{N-1}\prec\dots\prec \beta_1.$$
The definition of the order $\prec$ implies the following. Let $\beta_{\ell} \prec \beta_{p}$ and $\nu \in R^+$, such that $\beta_{\ell} - \nu \in R^+$ and $\beta_{p}- \nu \in R^+$, then
\begin{equation*}\label{order}
\beta_{\ell} - \nu \prec \beta_{p} - \nu.
\end{equation*}
We define differential operators for $\al\in R^{+}$ on $S(\lie n^-)$ by:
\begin{equation*}
\partial_{\alpha} x_{-\beta} :=
\begin{cases}
x_{-\beta + \alpha}, \, &\textrm{if} \, \beta - \alpha \in R^+\\
0, \,& \textrm{else.}
\end{cases}
\end{equation*} The operators satisfy
\begin{equation*}
\partial_\alpha x_{-\beta} = c_{\alpha,\beta} [x_{\alpha}, x_{-\beta}],
\end{equation*}
where $c_{\al,\beta} \in \tC$ are some non--zero constants.
\begin{lem}\label{maxinre}
 Let $\sum_{\br \in \Z_{+}^{N}}c_{\br}\X^{\br} \in S(\mathfrak{n}^-)$ and $\nu \in R^+$. We set $$\bt=\max\big\{\br \mid \partial_{\nu}\X^{\br} \neq 0, c_{\br} \neq 0\big\}.$$ 
Then the maximal monomial in $\sum_{\br \in \Z_{+}^{N}}c_{\br}\partial_{\nu}\X^{\br}$ is a summand of $\partial_{\nu} \X^{\bt}$.
\end{lem}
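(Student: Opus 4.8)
The plan is to track how the chosen reverse lexicographic order on monomials in $S(\mathfrak n^-)$ interacts with the differential operator $\partial_\nu$. The key structural fact is that $\partial_\nu$ is a derivation: for a monomial $\X^{\br}=\prod_\beta x_{-\beta}^{r_\beta}$ one has
$$
\partial_\nu \X^{\br}=\sum_{\beta:\,\beta-\nu\in R^+} r_\beta\, x_{-\beta+\nu}\,\X^{\br-e_\beta},
$$
so every monomial appearing in $\partial_\nu\X^{\br}$ is obtained from $\X^{\br}$ by lowering the exponent of some $x_{-\beta}$ (with $\beta-\nu\in R^+$) by one and raising the exponent of $x_{-(\beta-\nu)}$ by one. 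Since $\beta-\nu\prec\beta$ by the defining property of $\prec$ recorded just before the statement, each such monomial is strictly smaller than $\X^{\br}$ in our order; moreover the total degree is unchanged, and among the degree-preserving moves the order behaves predictably. The first step I would carry out is therefore to make this precise: given $\br$ with $\partial_\nu\X^{\br}\neq 0$, identify the maximal monomial of $\partial_\nu\X^{\br}$ explicitly, namely the one obtained by choosing, among all $\beta$ with $r_\beta>0$ and $\beta-\nu\in R^+$, the move that produces the $\prec$-largest result — this is a finite, completely combinatorial comparison inside a single $\partial_\nu\X^{\br}$.

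The second step is the cross-monomial comparison. Let $\bt=\max\{\br\mid \partial_\nu\X^{\br}\neq 0,\ c_{\br}\neq 0\}$. I need to show that no monomial appearing in $\partial_\nu\X^{\br}$ for $\br\neq\bt$ (with $c_{\br}\neq 0$, $\partial_\nu\X^{\br}\neq 0$) can equal, let alone exceed, the maximal monomial of $\partial_\nu\X^{\bt}$, and that the maximal monomial of $\partial_\nu\X^{\bt}$ is not cancelled. For the non-cancellation part, the coefficient of the maximal monomial in $\partial_\nu\X^{\bt}$ is $c_{\bt}$ times a positive integer (the relevant exponent $t_\beta$), and I will argue that this monomial cannot be produced from any smaller $\X^{\br}$: reconstructing the pre-image under a single derivation step is forced enough that producing the maximal monomial of $\partial_\nu\X^{\bt}$ from $\X^{\br}$ would force $\br\succeq\bt$. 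The cleanest way to phrase this is: if $\X^{\bm}$ occurs in $\partial_\nu\X^{\br}$ then $\br$ is recovered from $\bm$ by a single ``reverse move'' $x_{-\gamma}\mapsto x_{-(\gamma+\nu)}$ (with $\gamma+\nu\in R^+$), and all such reverse moves applied to the maximal monomial $\X^{\bm_0}$ of $\partial_\nu\X^{\bt}$ yield multi-exponents $\preceq\bt$, with equality only for $\br=\bt$; here one uses again that $\gamma\prec\gamma+\nu$ together with the reverse-lexicographic tie-breaking.

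I expect the main obstacle to be exactly this second step — controlling the interaction between the degree-preserving part of the order (the reverse lexicographic tie-break among monomials of the same total degree and same weight-shift pattern) and the two operations ``lower $x_{-\beta}$, raise $x_{-(\beta-\nu)}$''. One has to be careful that when $\br\prec\bt$ but the leading monomials of $\partial_\nu\X^{\br}$ and $\partial_\nu\X^{\bt}$ live in overlapping weight spaces, the strict drop $\beta-\nu\prec\beta$ genuinely dominates and is not undone by the tie-break. The argument is purely order-theoretic once the derivation formula is in hand, so I would organize it as a short lemma comparing, for fixed $\nu$, the pair $(\br,\text{lead}(\partial_\nu\X^{\br}))$ across all admissible $\br$, and then read off the statement. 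No representation theory enters; everything is a manipulation of multi-exponents under the fixed order $\prec$ and the property displayed before Lemma~\ref{maxinre}.
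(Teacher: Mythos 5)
Your first step (the derivation formula, and the fact that every monomial of $\partial_\nu\X^{\br}$ arises from $\X^{\br}$ by one move lowering some $x_{-\beta}$ with $\beta-\nu\in R^+$ and raising $x_{-(\beta-\nu)}$) is fine and agrees with the paper. The gap is in the cross-comparison, and it is not just an omission: the mechanism you propose there is false. You assert that producing the maximal monomial $\X^{\bm_0}$ of $\partial_\nu\X^{\bt}$ from some $\X^{\br}$ forces $\br\succeq\bt$ (and, in the next sentence, that all reverse moves applied to $\bm_0$ give exponents $\preceq\bt$ — the two statements already pull in opposite directions). The claim you actually need, namely that no $\br\prec\bt$ in the support can produce $\X^{\bm_0}$, fails. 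Concretely, in the setting of Section~4 take $\nu=\al_{1,1}$, $\X^{\bt}=x_{-\al_{1,2}}x_{-\al_{2,3}}$ and $\X^{\br}=x_{-\al_{2,2}}x_{-\al_{1,3}}$. The order of Section~4 gives $\al_{2,2}\prec\al_{1,2}\prec\al_{2,3}\prec\al_{1,3}$, and since the smallest variable $x_{-\al_{2,2}}$ occurs only in $\X^{\br}$, the homogeneous reverse lexicographic order gives $\X^{\br}\prec\X^{\bt}$. Nevertheless $\partial_\nu\X^{\bt}=x_{-\al_{2,2}}x_{-\al_{2,3}}=\partial_\nu\X^{\br}$, because in $\X^{\bt}$ only $\al_{1,2}$ and in $\X^{\br}$ only $\al_{1,3}$ admit the move. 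So a strictly smaller exponent reproduces exactly the (unique, hence maximal) monomial of $\partial_\nu\X^{\bt}$; with $c_{\bt}=1$, $c_{\br}=-1$ these contributions even cancel. Hence your ``non-cancellation'' step is not merely unproven but wrong, and the lemma cannot be established in the strengthened form you aim at; it must be read (and is proved in the paper) as the statement that the maximal monomial of $\partial_\nu\X^{\br}$ is $\preceq$ the maximal monomial of $\partial_\nu\X^{\bt}$ for every $\br$ occurring in the sum, i.e.\ the largest monomial among all summands is a summand of $\partial_\nu\X^{\bt}$.

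What is missing, and what the paper supplies, is precisely the comparison you defer to ``a short lemma''. The paper first identifies the maximal monomial of each $\partial_\nu\X^{\br}$ explicitly: it is obtained by lowering the $\prec$-largest factor $\beta_{j_{\br}}$ of $\X^{\br}$ on which $\partial_\nu$ acts nontrivially (this uses the reverse-lexicographic convention in an essential way — under plain lexicographic order it would be the smallest such factor — so your ``finite combinatorial comparison'' has genuine content that must be pinned down). It then compares $\overline{\bt}$ and $\overline{\br}$ directly, by cases on whether $\beta_{j_{\bt}}\succeq\beta_{j_{\br}}$ or not, using only $\bt\succ\br$, $\beta-\nu\prec\beta$, and the compatibility $\beta\prec\beta'\Rightarrow\beta-\nu\prec\beta'-\nu$. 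Your proposal contains neither this identification nor this comparison, and the pre-image (reverse-move) analysis you substitute for them does not work: reverse moves from $\bm_0$ can land both strictly above $\bt$ (harmless, since such exponents are excluded from the support by the maximality of $\bt$) and strictly below $\bt$ (the dangerous case exhibited above, which your claim wrongly rules out).
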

\begin{proof} 
%We write $\sum_{\bs \in \Z_{+}^{N}}c_{\bs}\X^{\bs} = c_{\bs_1} \X^{\bs_1} + \dots + c_{\bs_k} \X^{\bs_k}$ and assume $\X^{\bs_1} \succ \X^{\bs_2}\succ \dots \succ \X^{\bs_k}$. 
We express $\partial_{\nu}\X^{\bt}$ as a sum of monomials and let $\X^{\overline{\bt}}$ be the maximal element appearing in this expression. From the definition of the differential operators it is clear that $$\overline{t}_{\beta_{\ell}}=\begin{cases}t_{\beta_{\ell}},&\text{ if $\ell\neq j_{\bt}$, $\beta_{\ell}\neq \beta_{j_{\bt}}-\nu$}\\ t_{\beta_{\ell}}-1,&\text{ if $\ell=j_{\bt}$}\\ t_{\beta_{\ell}}+1,&\text{ if $\beta_{\ell}=\beta_{j_{\bt}}-\nu$} \end{cases},\
\mbox{ where } \beta_{j_{\bt}}=\max_{1\leq k\leq N}\big\{\beta_k\mid \partial_{\nu}x_{-\beta_k}\neq 0, t_{\beta_k}\neq 0\big\}.$$
With other words, $\X^{\overline{\bt}}$ is a scalar multiple of $$\prod_{\ell\neq j_{\bt}}x_{-\beta_\ell}^{t_{\beta_p}}x_{-\beta_{j_{\bt}}}^{t_{\beta_{j_{\bt}}}-1}x_{-\beta_{j_{\bt}}+\nu}.$$ Moreover, let $\X^{\overline{\br}}$ be any monomial with $c_{\br}\neq 0$ and $\partial_{\nu}\X^{\br}\neq 0$. Similar as above we denote by $\X^{\overline{\br}}$ the maximal element which appears as a summand of $\partial_{\nu}\X^{\br}$. 
In the rest of the proof we shall verify that $\overline{\bt}\succ\overline{\br}$. Since $\bt \succ \br$ this follows immediately if $j_{\bt}\leq j_{\br}$. So suppose that $j_{\bt}> j_{\br}$ and $\overline{\bt}\prec\overline{\br}$. This is only possible if $r_{\beta_{j_{\br}}}-1 < t_{\beta_{j_{\br}}}$ and $t_{\beta_p}=r_{\beta_p}$ for $1\leq p<j_{\br}$. Therefore we can deduce from $\bt \succ \br$ that $r_{\beta_{j_{\br}}}=t_{\beta_{j_{\br}}}$. It follows $t_{\beta_{j_{\br}}}\neq 0$, $\partial_{\nu}x_{-\beta_{j_{\br}}}\neq 0$ and $\beta_{j_{\bt}}\prec \beta_{j_{\br}}$, which is a contradiction to the choice of $\beta_{j_{\bt}}$ .
\end{proof}
The  proof of Lemma~\ref{reductlem} (i) proceeds as follows. 
%We use the above monomial order on $S(\lie n^-)$ and prove that any monomial $\X^{\mathbf s}$, $\mathbf s\notin S(\mathbf D,m\omega_i)$ in $S(\lie n^-)/\mathbf I_{m\omega_i}$ can be written as a sum of monomials, where each monomial appearing in this expression is less than $\X^{\mathbf s}$. We repeat this argument for any summand $\X^{\bt}$, $\bt\notin S(\mathbf D,m\omega_i)$ in this expression. After finitely many steps $\X^{\bs}$ can be written as a sum of monomials $\X^{\bt}$, $\bt\in S(\mathbf D,m\omega_i)$ which is exactly the statement of the lemma.
Let $\X^{\mathbf s}$, $\mathbf s\notin S(\mathbf D,m\omega_i)$ be a monomial in $S(\lie n^-)/\mathbf I_{m\omega_i}$. Then there exists a Dyck path $\bp$ such that $\sum_{\beta}s_{\beta}>M_{\bp}(m\omega_i)$. We define another multi--exponent $\br=(r_{\beta})$ by $r_\beta=s_\beta$ if $\beta\in \bp$ and $r_{\beta}=0$ otherwise. Since we have a monomial order it will be enough to prove that $\X^{\br}$ can be written as a sum of smaller monomials. Hence the following proposition proves Lemma~\ref{reductlem} (i).
\begin{prop}\label{spanprop2}
Let $\bp \in \mathbf D$ and $\mathbf s \in \Z_{+}^{|R^+_i|}$ be a multi--exponent supported on $\bp$, i.e. $s_{\beta} = 0$ for $\beta \notin \bp$. Suppose $\sum_{\beta \in \bp}{s_\beta} > M_{\mathbf p}(m\omega_i)$. Then there exist constants $c_{\bt} \in \tC, \bt \in \Z_{+}^{|R^+_i|}$ such that
\begin{equation*}
\X^{\mathbf s} + \sum_{\bt \prec\, \bs} c_{\bt} \X^{\bt} \in \bf I_{\lambda}.
\end{equation*}
\end{prop}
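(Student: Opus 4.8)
The plan is to treat the two kinds of Dyck paths separately, reducing each to a statement about a well-understood rank-$2$ (for type $1$) or rank-$3$ (for type $2$) situation inside the Hasse diagram of $(R_i^+,\leq)$, exactly in the spirit of the $\tt A_n$ and $\tt C_n$ proofs in \cite{FFL11,FFL2011}. For a type $1$ Dyck path $\bp=\{\beta(1),\dots,\beta(k)\}$, the defining relation in $\mathbf I_{m\omega_i}$ that I would exploit is $x_{-\beta(1)}^{m+1}$ (note $\beta(1)(\theta^\vee)$-type bound is $m$ because $\beta(1)\in\{\al_{1,i},\al_{1,i+1}\}$ and these are long/the appropriate roots), together with its images under $\bu(\lie n^+)$. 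The key algebraic tool is Lemma~\ref{maxinre}: applying a suitable product of differential operators $\partial_{\nu_1}\cdots\partial_{\nu_r}$ to $x_{-\beta(1)}^{m+1}$, where the $\nu_j$ are chosen so that the operators successively ``walk down'' the path $\beta(1)\to\beta(2)\to\cdots\to\beta(k)$ (each step being either $\al_{p,q}\mapsto\al_{p,q+1}$ or $\al_{p,q}\mapsto\al_{p+1,q}$, i.e. subtracting a simple root or the appropriate short root), produces an element of $\mathbf I_{m\omega_i}$ whose leading monomial with respect to $\prec$ is a nonzero scalar multiple of $\X^{\bs}$, provided $\sum_{\beta\in\bp}s_\beta>M_{\bp}(m\omega_i)=m$. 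Here one must check that the leading term is exactly $\X^{\bs}$ and not something larger: this is where the reverse-lexicographic order $\prec$ and the monotonicity property $\beta_\ell-\nu\prec\beta_p-\nu$ recorded just before Lemma~\ref{maxinre} do the bookkeeping, guaranteeing that each differential operator, applied to the current leading monomial, produces a new leading monomial supported further along the path.

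For type $2$ (double) Dyck paths $\bp=\bp_1\cup\bp_2$, the relevant relation is $x_{-\theta}^{\,m\omega_i(\theta^\vee)+1}$, since $M_{\bp}(m\omega_i)=m\omega_i(\theta^\vee)$ and $\theta=\al_{1,2}$ in type $\tt B_n$ sits at the top of the Hasse diagram; more precisely $\theta = \alpha_{1, 2n-2}$ in our indexing since $\theta=\alpha_1+2\alpha_2+\cdots+2\alpha_n$. The idea is that a double Dyck path records a way of degenerating $x_{-\theta}^{\,N}$ (with $N=m\omega_i(\theta^\vee)$) along two intertwined strands: one applies differential operators to split $x_{-\theta}^N$ into a product running along $\bp_1$ and simultaneously along $\bp_2$, using that $\bp_1\cap\bp_2=\emptyset$ and that the endpoints $\beta_1(k_1)=\al_{j,2n-j}$, $\beta_2(k_2)=\al_{j+1,2n-j-1}$ are neighbours. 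Again Lemma~\ref{maxinre} identifies the leading monomial of the resulting element of $\mathbf I_{m\omega_i}$ as a scalar multiple of $\X^{\bs}$ whenever $\sum_{\beta\in\bp}s_\beta>N$; the $\bu(\lie n^+)$-closure in the definition of $\mathbf I_{m\omega_i}$ is what allows us to hit an arbitrary such $\bs$ supported on $\bp$ rather than just one distinguished exponent vector.

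Concretely, I would (1) fix $\bp\in\mathbf D$ and $\bs$ supported on $\bp$ with $\sum_{\beta\in\bp}s_\beta=:S>M_{\bp}(m\omega_i)$; (2) choose the ``seed'' relation $x_{-\gamma}^{M_{\bp}+1}$ where $\gamma=\beta(1)$ (type $1$) or $\gamma=\theta$ (type $2$); (3) write down an explicit sequence of differential operators $\partial_{\nu_1},\dots,\partial_{\nu_r}$ — read off directly from the steps of the path and the multiplicities $s_{\beta(j)}$ — and set $z:=\partial_{\nu_r}\cdots\partial_{\nu_1}\bigl(x_{-\gamma}^{\,M_{\bp}+1}\,x_{-\gamma'}^{\,S-M_{\bp}-1}\bigr)$ for an auxiliary factor if needed; (4) invoke Lemma~\ref{maxinre} repeatedly to show the $\prec$-leading monomial of $z$ equals $c\,\X^{\bs}$ with $c\neq 0$; (5) conclude $\X^{\bs}+\sum_{\bt\prec\bs}c_{\bt}\X^{\bt}\in\mathbf I_{m\omega_i}$ by dividing by $c$. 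The main obstacle, I expect, is step (3)–(4) in the type $2$ case: one must check that the two strands of the double Dyck path can be traversed by differential operators \emph{without collision} — i.e. that the operator sequence never tries to lower a variable that has already been exhausted on the other strand — and that at every stage the leading monomial stays on track. Verifying that the combinatorics of double Dyck paths (the disjointness condition and the neighbouring-endpoints condition) is exactly what makes this work is the heart of the argument; the type $1$ case is a routine adaptation of \cite{FFL11}, but the type $2$ case has no direct precedent and is where the type $\tt B$ difficulties are concentrated.
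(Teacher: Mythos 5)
There is a genuine gap, and it sits exactly where you locate the ``routine'' part. Your plan takes as seed for a type~1 path the relation $x_{-\beta(1)}^{m+1}$ and proposes to ``walk down'' the path $\beta(1)\to\cdots\to\beta(k)$ with differential operators, describing each step $\al_{p,q}\mapsto\al_{p,q+1}$ or $\al_{p,q}\mapsto\al_{p+1,q}$ as a subtraction. But only half of these steps are subtractions: $\al_{p,q+1}-\al_{p,q}$ is a \emph{positive} simple root, and the operators $\partial_\nu$ can only replace $x_{-\beta}$ by $x_{-(\beta-\nu)}$. Consequently every monomial occurring in $\partial_{\nu_r}\cdots\partial_{\nu_1}x_{-\beta(1)}^{N}$ is a product of variables $x_{-\gamma}$ with $\gamma\leq\beta(1)$ in the order $\beta-\al\in Q^+$, whereas a type~1 path contains roots (for instance its endpoint $\al_{i,2n-i}$ or $\al_{i,2n-i-1}$) that are not below $\beta(1)=\al_{1,i}$ or $\al_{1,i+1}$. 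So no sequence of operators applied to $x_{-\beta(1)}^{m+1}$ can have $\X^{\bs}$ as leading term once $\bs$ is supported on the whole path, and steps (3)--(4) of your scheme fail for type~1 paths. The paper instead seeds with a power of the root dominating the entire rectangle: it starts from $x_{-\al_{1,2n-i}}^{|\bs|}\in\mathbf I_{m\omega_i}$ for paths in $\mathbf D_2^{\typ 1}$ and splits this power along the path by a staged application of operators controlled by Lemma~\ref{maxinre}; paths in $\mathbf D_1^{\typ 1}$ are then handled by first passing to an auxiliary path in $\mathbf D_2^{\typ 1}$ and acting with further operators to convert $x_{1,i+1}$--factors into the required $x_{j,i}$--factors.

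For type~2 paths your choice of seed (a power of $x_{-\theta}$; note $\theta=\al_{1,2n-1}$ in the paper's indexing, not $\al_{1,2n-2}$) agrees with the paper, but the mechanism you leave open --- traversing the two strands ``without collision'' --- is precisely what the paper resolves, and it does so not by a direct construction but by induction on the column $j$ of the endpoints $\al_{j,2n-j}$, $\al_{j+1,2n-j-1}$: from $\bp$ one builds a new double Dyck path $\overline{\bp}$ with smaller endpoint data and a modified multi--exponent, applies the induction hypothesis, and then acts with the operators $\partial_{\al_{b_{r-1},j}},\dots,\partial_{\al_{b_{r-1},2n-q_{t-1}}}$ on the maximal variable to recover $\X^{\bs}$ as leading term. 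So the proposal would need both a corrected seed for type~1 and an actual argument (the paper's induction, or a substitute) for type~2 before it constitutes a proof.
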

\begin{proof}
First we assume that $\bp = \{\beta(1),\dots,\beta(k)\} \in \mathbf D_2^{\typ 1}$. Note that the ideal $\bf I_{\lambda}$ is stable under the action of the differential operators and $x_{-\al_{1,2n - i}}^{s_{\beta(1)} + \dots + s_{\beta(k)}} \in \mathbf I_\lambda.$ In the following we write simply $x_{p,q} := x_{-\al_{p,q}}$ and $s_{p,q} := s_{\al_{p,q}}$ and rewrite the monomial $x_{-\beta(1)} \cdots x_{-\beta(k)}$ as follows. We can choose a sequence of integers 
$$1 = p_0 \leq p_1 < p_2 < \dots < p_{r-1} < p_r = i<i+1 = q_0 < q_1 < q_2 < \dots < q_{r-1} \leq q_r = 2n-i$$
$\mbox{ with } 1 \leq p_\ell \leq q_\ell \leq n \mbox{ or } 1 \leq p_\ell \leq 2n - q_\ell <n$ for all $0\leq \ell \leq r$ such that
%We need here less or equal, because the Dyck paths can go dirctly in the direction of south.
\begin{equation*}
x_{-\beta(1)} \cdots x_{-\beta(k)}=x_{1,i+1} \cdots x_{p_1,i+1} x_{p_1 ,i+2} \cdots x_{p_1, q_1} x_{p_1 + 1, q_1} \cdots x_{p_2, q_1} x_{p_2 ,q_1 + 1} \cdots x_{p_2 ,q_2} \cdots x_{p_r,q_r}.
\end{equation*}
See the picture below for an illustration:
\begin{center}
\begin{tikzpicture}
%\begin{tikzpicture}
\coordinate[label=left:\tiny{$_{1,i+1}$}] (A) at (-2,2);
\coordinate[label=right:\tiny{$_{i,i+1}$}] (B) at (2,2);
\coordinate[label=right:\tiny{$_{i,2n-i}$}] (C) at (2,-2);
\coordinate[label=left:\tiny{$_{1,2n-i}$}] (D) at (-2,-2);
\coordinate[label=above:\tiny{$_{p_1,i+1}$}] (E) at (-1.4,2);
\coordinate[label=below:\tiny{$_{p_1,q_1}$}] (F) at (-1.4,1.4);
\coordinate[label=right:\tiny{$_{p_2,q_1}$}] (G) at (-0.8,1.4);
\coordinate[label=below:\tiny{$_{p_2,q_2}$}] (H) at (-0.8,0.6);
\coordinate[label=right:\tiny{$_{p_3,q_2}$}] (I) at (-0.0,0.6);
\coordinate[label=left:\tiny{$$}] (J) at (-0.0,0.0);
\coordinate[label=below:\tiny{$_{p_3,q_3}$}] (JJ) at (-0.3,0.0);
\coordinate[label=right:\tiny{$$}] (M1) at (0.15,-0.15);
\coordinate[label=right:\tiny{$$}] (M) at (0.2,-0.2);
\coordinate[label=right:\tiny{$$}] (K) at (0.25,-0.25);
\coordinate[label=right:\tiny{$$}] (L) at (0.3,-0.3);
\coordinate[label=right:\tiny{$$}] (M2) at (0.35,-0.35);

\coordinate[label=below:\tiny{$_{p_{r-1},q_{r-1}}$}] (N) at (1.5,-2);
\coordinate[label=above:\tiny{$$}] (O) at (1.5,-1);
\coordinate[label=above:\tiny{$_{p_{r-1},q_{r-2}}$}] (OO) at (1.35,-1);
\coordinate[label=below:\tiny{$_{p_{r-2},q_{r-2}}$}] (P) at (0.5,-1);
%\coordinate[label=below:\tiny{$_{p_{r-2},q_{r-2}}$}] (P) at (0.5,-1);
\coordinate[label=below:\tiny{$$}] (Q) at (0.5,-0.5);
\draw (A) -- (B);
\draw (C) -- (B);
\draw (C) -- (D);
\draw (A) -- (D);
\draw[thick] (E) -- (A);
\draw[thick] (E) -- (F);
\draw[thick] (F) -- (G);
\draw[thick] (H) -- (G);
\draw[thick] (H) -- (I);
\draw[thick] (J) -- (I);
\draw[thick] (N) -- (C);
\draw[thick] (N) -- (O);
\draw[thick] (P) -- (O);
\draw[thick] (P) -- (Q);
\fill (A) circle (1.5pt);
\fill (B) circle (1.5pt);
\fill (C) circle (1.5pt);
\fill (D) circle (1.5pt);
\fill (E) circle (1.5pt);
\fill (F) circle (1.5pt);
\fill (G) circle (1.5pt);
\fill (H) circle (1.5pt);
\fill (I) circle (1.5pt);
\fill (J) circle (1.5pt);
\fill (K) circle (0.5pt);
\fill (L) circle (0.5pt);
\fill (M) circle (0.5pt);
\fill (M1) circle (0.5pt);
\fill (M2) circle (0.5pt);
\fill (N) circle (1.5pt);
\fill (O) circle (1.5pt);
\fill (P) circle (1.5pt);
\fill (Q) circle (1.5pt);
%\end{tikzpicture}
%\node (1) at (2,2) {\tiny{$\al_{i,i+1}$}};
%\node (2) at (2,-2) {\tiny{$\al_{i,2n-i}$}};
%\node (3) at (-2,2) {\tiny{$\al_{1,i+1}$}};
%\node (4) at (-2,-2) {\tiny{$\al_{1,2n-i}$}};
%\node (5) at (-1.3,1.5) {\tiny{$\al_{p_1,q_1}$}};
%\node (6) at (-1.3,2) {};
%\node (7) at (-0.8,1) {\tiny{{$p_2,q_2$}}};
%
%
%\draw (1) -- (3);
%\draw (2) -- (4);
%\draw (4) -- (3);
%\draw (2) -- (1);
%\draw (3) -- (6);
%\draw (6) -- (5);
\end{tikzpicture}
\end{center}
For $0\leq \ell \leq r$ we define $s_{p_{\ell}} := s_{p_{\ell},q_{{\ell}-1} +1} + \dots + s_{p_{\ell},q_{\ell}} + s_{p_{\ell} + 1,q_{\ell}} + \dots + s_{p_{{\ell}+1},q_{\ell}}$ and $|\mathbf s| := s_{\beta(1)}+\cdots+s_{\beta(k)}$. Then
%$\partial_{\al_{1,q_1-1}}^{s_{p_1,i+1}+ \dots +s_{p_1,q_1}} x_{1,2n-i}^{m_{1,i+1} + \dots + m_{p_r,q_r}} = x_{1,2n-i}^{m_{1,i+1} + \dots + m_{p_r,q_r} - s_{p_1,i+1}+ \dots +s_{p_1,q_1}} x_{p_1,2n-i}^{s_{p_1,i+1}+ \dots +s_{p_1,q_1} + s_{p_1 +1 ,q_1}+ \dots+ s_{p_2,q_1}}\in \bf I_{\lambda}.$
\begin{equation*}
\partial_{\al_{1,p_1-1}}^{^{s_{p_1}}} x_{1,2n-i}^{^{|\mathbf s|}} = x_{1,2n-i}^{^{|\mathbf s| - s_{p_1}}} x_{p_1,2n-i}^{^{s_{p_1}}}\in \bf I_{\lambda}.
\end{equation*}
Since $\partial_{\al_{1,l}} x_{t, 2n-i} = 0$ for $1 < t \leq l<i$ we conclude with $p_1 < p_2 < \dots < p_r$:
\begin{equation*}
\partial_{\al_1,p_r -1}^{s_{p_r}} \cdots \partial_{\al_1,p_2 -1}^{s_{p_2}} \partial_{\al_1,p_1 -1}^{s_{p_1}}   x_{1,2n-i}^{|\mathbf s|} =
x_{1,2n-i}^{|\mathbf s| - \sum_{t=1}^r s_{p_t}} x_{p_1,2n-i}^{s_{p_1}} x_{p_2,2n-i}^{s_{p_2}}  \cdots  x_{p_r,2n-i}^{s_{p_r}} \in \bf I_{\lambda}.
\end{equation*}
Note that the operator $\partial_{\al_{i+1,2n-(i + 1)}}$ acts non--trivially on each $x_{p_j,2n-i}$. The choice of the order implies that the largest monomial in
\begin{equation}\label{firstsum}
\partial_{\al_{i+1,2n-(i + 1)}}^{s_{1,i+1}+ \dots + s_{p_1,i+1}} x_{1,2n-i}^{|\mathbf s| - \sum_{t=1}^r s_{p_t}} x_{p_1,2n-i}^{s_{p_1}} x_{p_2,2n-i}^{s_{p_2}}  \dots  x_{p_r,2n-i}^{s_{p_r}}
\end{equation}
is obtained by acting with $\partial_{\al_{i+1,2n-(i + 1)}}$ only on the the largest element $x_{1,2n-i}$. So the largest monomial in $\eqref{firstsum}$ with respect to $\prec$ is 
\begin{equation}\label{firstelement}
x_{1,i+1}^{s_{1,i+1}+ \dots + s_{p_1,i+1}} x_{p_1,2n-i}^{s_{p_1}} x_{p_2,2n-i}^{s_{p_2}}  \dots  x_{p_r,2n-i}^{s_{p_r}}.
\end{equation}
Each of the operators $\partial_{\al_{p_1-1, p_1-1}},  \dots, \partial_{\al_{2,2}}, \partial_{\al_{1,1}}$ act trivially on each $x_{p_j,2n-i}$. Since
\begin{equation*}
\partial_{\al_{p_1-1, p_1-1}}^{s_{p_1,i+1}}  \dots \partial_{\al_{2,2}}^{s_{3,i+1}+ \dots + s_{p_1,i+1}}  \partial_{\al_{1,1}}^{s_{2,i+1}+ \dots + s_{p_1,i+1}}  x_{1,i+1}^{s_{1,i+1}+ \dots + s_{p_1,i+1}} = x_{1,i+1}^{s_{1,i+1}} \dots x_{p_1,i+1}^{s_{p_1,i+1}}
\end{equation*}
we obtain by acting with these operators on $\eqref{firstelement}$  that 
\begin{equation}\label{imnachhinein}
x_{1,i+1}^{s_{1,i+1}} \dots x_{p_1,i+1}^{s_{p_1,i+1}} x_{p_1,2n-i}^{s_{p_1}} x_{p_2,2n-i}^{s_{p_2}}  \dots  x_{p_r,2n-i}^{s_{p_r}} + \sum \mathrm{smaller \, monomials} \in \bf I_{\lambda}.
\end{equation}
In the next step we act with the operators $\partial_{\al_{i+1,2n - q_{1}}}, \partial_{\al_{i+1,2n - q_{1}+1}},\dots,\partial_{\al_{i+1,2n - (q_{0}+1)}}$ on $x_{p_1,2n-i}$ and obtain with Lemma~\ref{maxinre}:
\begin{align}\label{stern2}
\partial_{\al_{i+1,2n - q_{1}}}^{s_{p_1}-(s_{p_1,q_1-1}+\cdots+{s_{p_1,q_0+1}})}&\partial_{\al_{i+1,2n - q_{1}+1}}^{s_{p_1,q_1-1}} \dots\partial_{\al_{i+1,2n - (q_{0}+1)}}^{s_{p_1,q_0+1}}x_{p_1,2n-i}^{s_{p_1}}&\\&
\notag=x_{p_1,q_1}^{s_{p_1}-(s_{p_1,q_1-1}+\cdots+{s_{p_1,q_0+1}})}x_{p_1,q_1-1}^{s_{p_1,q_1-1}}\cdots x_{p_1,q_0+1}^{s_{p_1,q_0+1}}+\sum \mathrm{smaller \, monomials} 
\end{align}
Since $x_{p_1,2n-i}$ is the maximal element with respect to $\prec$ among the factors in the leading term of $\eqref{imnachhinein}$ we get by combining Lemma~\ref{maxinre} and $\eqref{stern2}$
\begin{align}\label{secondelement4}
 x_{1,i+1}^{s_{1,i+1}} &\dots x_{p_1,i+1}^{s_{p_1,i+1}} x_{p_1,q_1}^{\sum_{\ell=p_1}^{p_2} s_{\ell,q_1}}x_{p_1,q_1 - 1}^{s_{p_1,q_1 - 1}} \dots x_{p_1,q_0 + 1}^{s_{p_1,q_0 + 1}}x_{p_2,2n-i}^{s_{p_2}}  \dots  x_{p_r,2n-i}^{s_{p_r}}+ \sum \mathrm{smaller \, monomials}\in \bf I_{\lambda}.
\end{align}
%If $q_1>n$ we act with the operator $\partial_{\al_{i+1,2n-i+1}}^{s_{p_1,q_0+1}}\dots\partial_{\al_{i+1,n+1}}^{s_{p_1,n-1}}\partial_{\al_{i+1,n}}^{s_{p_1,n}}$ on $\eqref{}$
%In either case we can act with suitable operators such that $\eqref{secondelement4}$ holds.
%At this point we have to be aware if we want to produce a rootvector of the shape $x_{p_1,n-1}$. This rootvector we obtain by acting with $\partial_{\al_{i+1,n+1}}$ on $x_{p_j, 2n - i}$. Consider $\eqref{secondelement}$, among the smaller monomials there are monomials with $x_{1,2n-i}$ as a factor and $\partial_{\al_{i+1,n+1}}x_{1,2n-i} = x_{1,n+1}$. So we have the following situation:
%\begin{equation}\label{extracase}
%\partial_{\al_{i+1,n+1}}^{s_{p_1,n-1}} \partial_{\al_{1,p_1 -1}}^{s_{p_1,n-1}} x_{1,2n-i}^{s_{p_1,n-1}} = x_{p_1,n-1}^{s_{p_1,n-1}} + \sum_{a+b = s_{p_1,n-1}} x_{1,n+1}^a x_{p_j, 2n - i}^b.
%\end{equation}
%In $\eqref{extracase}$ the maximal monomial is $x_{p_1,n-1}^{s_{p_1,n-1}}$.\\
Now we act with the operators $\partial_{\al_{p_2 - 1,p_2 -1}},\dots \partial_{\al_{p_1 +1,p_1 + 1}},\partial_{\al_{p_1,p_1}}$:
\begin{equation}
\label{thirdsum}
\begin{aligned}
 \partial_{\al_{p_2 - 1,p_2 -1}}^{s_{p_2,q_1}} \dots \partial_{\al_{p_1 +1,p_1 + 1}}^{s_{p_1 + 2,q_1} + \dots + s_{p_2,q_1}} \partial_{\al_{p_1,p_1}}^{s_{p_1 + 1,q_1} + s_{p_1 + 2,q_1} + \dots + s_{p_2,q_1}} x_{p_1,q_1}^{s_{p_1,q_1} + s_{p_1 + 1,q_1} + \dots + s_{p_2,q_1}}=\\
x_{p_1,q_1}^{s_{p_1,q_1}} x_{p_1 + 1,q_1}^{s_{p_1 + 1,q_1}} \dots x_{p_2,q_1}^{s_{p_2,q_1}}.
 %- (s_{p_1,q_1 - 1} + s_{p_1,q_1 - 2} + \dots s_{p_1,q_0 + 1})}
 %x_{p_1,q_1 - 1}^{s_{p_1,q_1 - 1}}x_{p_1,q_1 - 2}^{s_{p_1,q_1 - 2}} \dots x_{p_1,q_0 + 1}^{s_{p_1,q_0 + 1}}.
\end{aligned}
\end{equation}
Since $\partial_{\al_{p_2 - 1,p_2 -1}},\dots \partial_{\al_{p_1 +1,p_1 + 1}},\partial_{\al_{p_1,p_1}}$ act trivially on each $x_{pj,2n-1}$ and $x_{p_1,q_1}$ is the largest element with respect to $\prec$ among the remaining factors in the leading term of $\eqref{secondelement4}$ we get by combining $\eqref{secondelement4}$ and $\eqref{thirdsum}$ that the following element is the sum of strictly smaller monomials in $S(\mathfrak{n}^-)/\bf I_{\lambda}$:
\begin{equation*}
\label{thirdelement}
%\begin{aligned}
x_{1,i+1}^{s_{1,i+1}} \dots x_{p_1,i+1}^{s_{p_1,i+1}} x_{p_1,q_1}^{s_{p_1,q_1}} x_{p_1,q_1 - 1}^{s_{p_1,q_1 - 1}}x_{p_1,q_1 - 2}^{s_{p_1,q_1 - 2}} \dots x_{p_1,q_0 + 1}^{s_{p_1,q_0 + 1}} x_{p_1 + 1,q_1}^{s_{p_1 + 1,q_1}} \dots x_{p_2,q_1}^{s_{p_2,q_1}} x_{p_2,2n-i}^{s_{p_2}}  \dots  x_{p_r,2n-i}^{s_{p_r}} 
%\end{aligned}
\end{equation*}
%At this point we have to be aware of when we produce a rootvector of the shape $x_{p_j,n-1}$. This rootvector we obtain by directly acting with $\partial_{\al_{q_j + 1 ,n}}$ and then acting with $\partial_{\al_{n,n}}$ on $x_{p_j, 2n - i}$. Because this case apperas only once, it does not change the order.\\
If we repeat the above steps with $x_{p_2,2n-i}^{s_{p_2}}  \dots  x_{p_r,2n-i}^{s_{p_r}}$ we can deduce the proposition for $\bp \in \mathbf D_2^{\mathrm{type \, 1}}$. Now suppose that $\bp \in \mathbf D_1^{\mathrm{type \, 1}}$ is of the form $$\bp =\{\al_{1,i},\al_{2,i} \dots \al_{\ell,i}, \al_{\ell,i+1}, \dots,\al_{r,i+1},\al_{r,i+2},\dots \al_{i,2n-i-1}\}.$$ We shall construct another Dyck path as follows. We set $\bq=\{\al_{\ell,i+1}, \dots,\al_{r,i+1},\al_{r,i+2},\dots \al_{i,2n-i-1}\}$. Then it is easy to see that we can find an element $\widetilde{\bq}\in \mathcal{P}(R_i^+)$ such that the path $\overline{\bq}:=\bq \cup \widetilde{\bq}\in \mathbf D_2^{\typ 1}$. We define a multi--exponent $s(\overline{\bq})$ by
$$s(\overline{\bq})_{\beta}=s_{\beta},\ \text{if $\beta\in \bq$},\ s(\overline{\bq})_{\alpha_{1,i+1}}=s_{\alpha_{1,i}}+\cdots+s_{\alpha_{\ell,i}},\ \text{ and else }s(\overline{\bq})_{\beta}=0.$$
By our previous calculations we get
\begin{equation}\label{haha2}\X^{\bs(\overline{\bq})}+\sum_{\bt\prec s(\overline{\bq})}c_{\bt}\X^{\bt}\in \mathbf I_{\lambda}.\end{equation}
Note that each operator $\partial_{\al_{1,1}},\dots,\partial_{\al_{\ell-1,\ell-1}}$ acts trivially on $x_{\beta}$ for all $\beta\in\bq$ and $\partial_{\al_{i+1,i+1}}$ acts trivially on $x_{\beta}$ for all $\beta\in \bq\backslash \{\al_{\ell+1,i+1},\dots \al_{r,i+1}\}$. Since $x_{1,i+1}\succ x_{j,i+1}$ for all $\ell+1\leq j \leq r$ the maximal element when acting with $\partial_{\al_{i+1,i+1}}$ on $\eqref{haha2}$ is obtained by acting with $\partial_{\al_{i+1,i+1}}$ on $x_{1,i+1}$. We have
\begin{equation}\label{endvor}\partial_{\al_{i+1,i+1}}^{s_{1,i}+\cdots+s_{\ell,i}}\X^{s(\overline{\bq})}+=x_{1,i}^{s_{1,i}+\cdots+s_{\ell,i}}\X^{s(\bq)}+\sum \mathrm{smaller \, monomials}\in \mathbf I_{\lambda},\end{equation}
where $s(\bq)$ is the multi--exponent defined by $s(\bq)_{\beta}=s_{\beta}$ if $\beta\in \bq$ and $s(\bq)_{\beta}=0$ otherwise.
In the last step we act with $\partial_{\al_{\ell-1,\ell-1}}^{s_{\ell,i}}\partial_{\al_{\ell-2,\ell-2}}^{s_{\ell-1,i}+s_{\ell,i}}\cdots\partial_{\al_{1,1}}^{s_{2,i}+\cdots+s_{\ell,i}}$ on $\eqref{endvor}$ and get
$$\X^{\bs}+\sum_{\bt\prec \bs}c_{\bt}\X^{\bt}\in \mathbf I_{\lambda}.$$
Now we assume that $\bp \in \bf D^{\mathrm{type \, 2}}$, which means that $\bp$ can be written as a union $\bp = \bp_1 \cup \bp_2$ with $\mathbf p_1=\{\beta_1(1),\dots,\beta_1(k_1)\}$ and $\mathbf p_2=\{\beta_2(1),\dots,\beta_2(k_2)\}$ such that $\beta_1(k_1)=\al_{j-1,2n-j+1}$ and  $\beta_2(k_2)=\al_{j,2n-j}$.
We have 
\begin{equation}\label{benn34}
x_{{1,2n - 1}}^{s_{\beta_1(1)} + \dots + s_{\beta_1(k_1)} + s_{\beta_2(1)} + \dots + s_{\beta_2(k_2)}} \in \mathbf I_\lambda.\end{equation}
We will prove the statement of the proposition by upward induction on $j\in\{2,\dots,i\}$. If $j=2$, we have
$$\bp_1=\{\al_{1,i},\al_{1,i+1},\dots,\al_{1,2n-1}\} \mbox{ and } \bp_2=\{\al_{2,i},\al_{2,i+1},\dots,\al_{2,2n-2}\}$$
and therefore by acting on \eqref{benn34} we get
\begin{align*}
\partial_{\al_{1,2n-i}}^{s_{2,i}}\cdots\partial_{\al_{1,3}}^{s_{2,2n-3}}&\partial_{\al_{1,2}}^{s_{2,2n-2}}\partial_{\al_{2,2n-i}}^{s_{1,i}}\cdots\partial_{\al_{2,3}}^{s_{1,2n-2}}\partial_{\al_{2,2}}^{s_{1,2n-2}}x_{{1,2n - 1}}^{s_{\beta_1(1)} + \dots + s_{\beta_1(k_1)} + s_{\beta_2(1)} + \dots + s_{\beta_2(k_2)}}=
&\\& = x_{1,2n-1}^{s_{1,2n-1}}\cdots x_{1,i+1}^{s_{1,i+1}}x_{1,i}^{s_{1,i}}x_{2,2n-2}^{s_{2,2n-i-1}}\cdots x_{2,i+1}^{s_{2,i+1}} x_{2,i}^{s_{2,i}}+ \sum \mathrm{smaller \, monomials}\in \mathbf I_\lambda
\end{align*}
and the induction begins. As before we rewrite the Dyck path as follows:
\begin{flalign*}
&x_{-\beta_1(1)} x_{-\beta_1(2)} \cdots x_{-\beta_1(k_1)} = x_{1,i} x_{1,i+1} \cdots x_{b_1,c_1} x_{b_1 + 1 ,c_1} \cdots x_{b_2, c_1} x_{b_2, c_1+1} \cdots x_{b_2, c_2} \dots x_{b_r,c_r}&\\&
x_{-\beta_2(1)} x_{-\beta_2(2)} \cdots x_{-\beta_2(k_2)}  = x_{2,i} x_{3,i}\cdots x_{p_1,i} x_{p_1,i+1} \cdots x_{p_1,q_1} x_{p_1 + 1 ,q_1} \cdots x_{p_2, q_1} x_{p_2, q_1+1} \cdots x_{p_2, q_2} \cdots x_{p_t,q_t}
\end{flalign*}
where 
$$1 = b_0 = b_1 < b_2 < \cdots < b_{r-1} \leq b_r = j-1,\ i = c_0 < c_1 < c_2 < \cdots < c_{r-1} \leq c_r = 2n-j+1,$$ 
$$2 = p_0 \leq p_1 < p_2 < \dots < p_{r-1} \leq p_t =j \mbox{ and } i = q_0 < q_1 < q_2 < \dots < q_{t-1} \leq q_t = 2n-j.$$
For a pictorial illustration see the picture below:
\begin{center}
\begin{tikzpicture}
%\begin{tikzpicture}
\coordinate[label=left:\tiny{$_{1,i}$}] (A) at (-2,2);
\coordinate[label=right:\tiny{$_{i,i}$}] (B) at (3.1,2);
\coordinate[label=right:\tiny{$_{i,2n-i}$}] (C) at (3.1,-0.9);
%\coordinate[label=left:\tiny{$_{1,n}$}] (ZZ) at (-2,0);
\coordinate[label=left:\tiny{$_{1,2n-1}$}] (D) at (-2,-6);
\coordinate[label=above:\tiny{$_{2,i}$}] (E) at (-1.6,2);
\coordinate[label=above right:\tiny{$_{p_1,i}$}] (EF) at (-1.1,2);
\coordinate[label=below:\tiny{$_{p_1,q_1}$}] (F) at (-1.1,1.4);
\coordinate[label=right:\tiny{$_{p_2,q_1}$}] (G) at (-0.8,1.4);
\coordinate[label=below:\tiny{$_{p_2,q_2}$}] (H) at (-0.8,0.6);
\coordinate[label=right:\tiny{$_{p_3,q_2}$}] (I) at (-0.0,0.6);
\coordinate[label=left:\tiny{$$}] (J) at (-0.0,0.0);
\coordinate[label=below:\tiny{$_{p_3,q_3}$}] (JJ) at (-0.3,0.0);
\coordinate[label=right:\tiny{$$}] (M1) at (0.15,-0.15);
\coordinate[label=right:\tiny{$$}] (M) at (0.2,-0.2);
\coordinate[label=right:\tiny{$$}] (K) at (0.25,-0.25);
\coordinate[label=right:\tiny{$$}] (L) at (0.3,-0.3);
\coordinate[label=right:\tiny{$$}] (M2) at (0.35,-0.35);

\coordinate[label=above right:\tiny{$$}] (N) at (2,-1);
\coordinate[label=above right:\tiny{$_{p_{t},q_{t-1}}$}] (NN) at (1.9,-1.1);
\coordinate[label=right:\tiny{$$}] (O) at (1.5,-1);
\coordinate[label=above:\tiny{$$}] (OO) at (1.35,-1);
\coordinate[label=above right:\tiny{$$}] (P) at (0.5,-1);
\coordinate[label=above right:\tiny{$_{p_{t-1},q_{t-1}}$}] (PP) at (0.5,-1);
%\coordinate[label=below:\tiny{$_{p_{r-2},q_{r-2}}$}] (P) at (0.5,-1);
\coordinate[label=above right:\tiny{$$}] (Q) at (0.5,-0.5);
\coordinate[label=above right:\tiny{$_{p_{t-1},q_{t-2}}$}] (QQ) at (0.5,-0.6);
\coordinate[label=right:\tiny{$_{p_{t},q_{t}} = {}_{j,2n-j}$}] (R) at (2,-2);

\coordinate[label=left:\tiny{$_{b_1,c_1}$}] (S) at (-2,0.8);
\coordinate[label=above right:\tiny{$$}] (T) at (-1.6,0.8);
\coordinate[label=above right:\tiny{$_{b_2,c_1}$}] (TT) at (-1.65,0.7);
\coordinate[label=below:\tiny{$_{b_2,c_2}$}] (U) at (-1.6,-0.2);
\coordinate[label=above:\tiny{$_{b_3,c_2}$}] (V) at (-1,-0.2);
\coordinate[label=left:\tiny{$_{b_3,c_3}$}] (W) at (-1,-0.8);
\coordinate[label=left:\tiny{$$}] (W1) at (-0.8,-1);
\coordinate[label=left:\tiny{$$}] (X1) at (-0.4,-1.4);
\coordinate[label=below:\tiny{$_{b_{r-2},c_{r-1}}$}] (X) at (-0.2,-1.6);
\coordinate[label=right:\tiny{$\!_{b_{r-1},c_{r-1}}$}] (Y) at (0.7,-1.6);
\coordinate[label=below:\tiny{$_{b_{r-1},c_r}$}] (Z) at (0.7,-2.3);
\coordinate[label=right:\tiny{$$}] (Z1) at (1.7,-2.3);
\coordinate[label=right:\tiny{$_{b_r,c_r} = {}_{j-1,2n-j+1}$}] (Z2) at (1.6,-2.5);

\draw (A) -- (B);
\draw (C) -- (B);
\draw (C) -- (D);
\draw (A) -- (D);
%\draw[thick] (E) -- (A);
\draw[thick] (E) -- (EF);
\draw[thick] (F) -- (EF);
\draw[thick] (F) -- (G);
\draw[thick] (H) -- (G);
\draw[thick] (H) -- (I);
\draw[thick] (J) -- (I);
\draw[thick] (N) -- (R);
\draw[thick] (N) -- (O);
\draw[thick] (P) -- (O);
\draw[thick] (P) -- (Q);
\fill (A) circle (1.5pt);
\fill (B) circle (1.5pt);
\fill (C) circle (1.5pt);
\fill (D) circle (1.5pt);
\fill (E) circle (1.5pt);
\fill (EF) circle (1.5pt);
\fill (F) circle (1.5pt);
\fill (G) circle (1.5pt);
\fill (H) circle (1.5pt);
\fill (I) circle (1.5pt);
\fill (J) circle (1.5pt);
\fill (K) circle (0.5pt);
\fill (L) circle (0.5pt);
\fill (M) circle (0.5pt);
\fill (M1) circle (0.5pt);
\fill (M2) circle (0.5pt);
\fill (N) circle (1.5pt);
%\fill (O) circle (1.5pt);
\fill (P) circle (1.5pt);
\fill (Q) circle (1.5pt);
%\fill (ZZ) circle (1.5pt);
\fill (R) circle (1.5pt);

\draw[thick] (S) -- (A);
\draw[thick] (S) -- (T);
\draw[thick] (U) -- (T);
\draw[thick] (U) -- (V);
\draw[thick] (W) -- (V);
\draw[thick, dotted] (W1) -- (X1);
\draw[thick] (X) -- (Y);
\draw[thick] (Y) -- (Z);
\draw[thick] (Z1) -- (Z);

\fill (S) circle (1.5pt);
\fill (T) circle (1.5pt);
\fill (U) circle (1.5pt);
\fill (V) circle (1.5pt);
\fill (W) circle (1.5pt);
\fill (X) circle (1.5pt);
\fill (Y) circle (1.5pt);
\fill (Z) circle (1.5pt);
\fill (Z1) circle (1.5pt);
%\end{tikzpicture}
%\node (1) at (2,2) {\tiny{$\al_{i,i+1}$}};
%\node (2) at (2,-2) {\tiny{$\al_{i,2n-i}$}};
%\node (3) at (-2,2) {\tiny{$\al_{1,i+1}$}};
%\node (4) at (-2,-2) {\tiny{$\al_{1,2n-i}$}};
%\node (5) at (-1.3,1.5) {\tiny{$\al_{p_1,q_1}$}};
%\node (6) at (-1.3,2) {};
%\node (7) at (-0.8,1) {\tiny{{$p_2,q_2$}}};
%
%
%\draw (1) -- (3);
%\draw (2) -- (4);
%\draw (4) -- (3);
%\draw (2) -- (1);
%\draw (3) -- (6);
%\draw (6) -- (5);
\end{tikzpicture}
\end{center}
We will construct another path $\overline{\bp}\in \mathbf D^{\typ 2}$. We set 
$$\widetilde{\bp}_1=\bp\backslash\{\al_{p_t,q_{t-1}},\al_{p_t,q_{t-1}+1},\dots,\al_{p_t,q_t}\}.$$ Then it is easy to see that there exists a unique element $\widetilde{\bp}_2\in \mathcal{P}(R_i^+)$ such that $\overline{\bp}=\widetilde{\bp}_1\cup \widetilde{\bp}_2\in \mathbf D^{\typ 2}$ and the roots $\alpha_{j-2,2n-j+2}$, $\alpha_{j-1,2n-j+1}$ appear in $\overline{\bp}$. We define a multi--exponent $s(\overline{\bp})$ by
$$s(\overline{\bp})_{\beta}=s_{\beta},\ \text{ if $\beta\in \widetilde{\bp}_1\backslash\{\al_{b_{r-1},c_r}\}$},\ s(\overline{\bp})_{\al_{b_{r-1},c_r}}=s_{b_{r-1},c_r}+s_{{p_t,q_{t-1}}}+s_{{p_t,q_{t-1}+1}}+\cdots+s_{{p_t,q_{t}}}$$
and $s(\overline{\bp})_{\beta}=0$ otherwise. The induction hypothesis yields 
\begin{equation}\label{hunw}\X^{s(\overline{\bp})}+\sum_{\bt\prec s(\overline{\bp})}c_{\bt}\X^{\bt}\in \mathbf I_{\lambda}.\end{equation}
Now we want to act with suitable operators on $\eqref{hunw}$ such that the leading term is the required monomial $\X^{\bs}$. Since $x_{b_{r-1},c_r}$ is the maximal element in $\X^{s(\overline{\bp})}$ and $\partial_{\al_{b_{r-1},j}},\dots ,\partial_{\al_{b_{r-1},2n-q_{t-1}}}$ act non trivially on $x_{b_{r-1},c_r}$ we obtain the desired property
\begin{align*}\partial_{\al_{b_{r-1},2n-q_{t-1}}}^{s_{p_t,q_{t-1}}}\cdots \partial_{\al_{b_{r-1},j}}^{s_{p_t,q_t}}\X^{s(\overline{\bp})}&+\sum_{\bt\prec s(\overline{\bp})}c_{\bt}\partial_{\al_{b_{r-1},2n-q_{t-1}}}^{s_{p_t,q_{t-1}}}\cdots \partial_{\al_{b_{r-1},j}}^{s_{p_t,q_t}}\X^{\bt}=&\\& =\X^{\bs}+\sum \mathrm{smaller \, monomials}\in \mathbf I_{\lambda}.
\end{align*}
\end{proof}
\subsection{Proof of Lemma~\ref{reductlem} (ii) in various cases}\label{zweitehhh}
In this section we shall prove various cases of Lemma~\ref{reductlem} (ii). Consider the partial order 
$$\al_{j,k}\leq \al_{p,r}\Leftrightarrow  (j\geq p \wedge k\geq r)$$
and suppose we are given a multi--exponent $\bs \in S(\mathbf D,m\omega_i)$. Recall the defintion of $R^+_i(\ell)$ from Section~\ref{dyckfori}. Let $\Ra^{\bs}=\{\beta\in R^+_i(2n-i)\mid s_{\beta}\neq 0\}$
and $\T^{\bs}$ the set of minimal elements in $\Ra^{\bs}$ with respect to $\leq$. We define a multi--exponent $\bt^{\bs}$ by $t_{\beta}=1$,\ if $\beta\in \T^{\bs}$ and $t_{\beta}=0$ otherwise and call it the multi--exponent associated to $\bs$. The following lemma can be deduced from \cite[Proposition 3.7]{FFL11}.
\begin{lem}\label{Anfolg}
Let $\bs\in S(\mathbf D,m\omega_i)$ such that $s_{\beta}\neq 0$ implies $\beta\in R^+_i(2n-i-1)$ \big(resp. $\beta \in (R^+_i\cap R^+_{i+1})(2n-i)$\big). Then we have
$$\bs-\bt^{\bs}\in S(\mathbf D,(m-1)\omega_i).$$
\end{lem}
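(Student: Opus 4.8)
The plan is to reduce the statement to a known fact in type $\tt A$, namely \cite[Proposition 3.7]{FFL11}, by observing that the restriction on the support of $\bs$ forces the whole problem to take place in a sub-Hasse-diagram that is (a union of) type $\tt A$ staircase shapes. First I would record what must be checked: for every Dyck path $\bp\in\mathbf D=\mathbf D^{\typ 1}\cup\mathbf D^{\typ 2}$ one needs
$$\sum_{\beta\in\bp}(s_\beta-t^{\bs}_\beta)\le M_{\bp}((m-1)\omega_i),$$
i.e. $M_{\bp}(m\omega_i)-M_{\bp}((m-1)\omega_i)\ge\sum_{\beta\in\bp}t^{\bs}_\beta$. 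Since $M_{\bp}$ is linear in $m$ the right-hand difference equals $1$ if $\bp\in\mathbf D^{\typ 1}$ and $\omega_i(\theta^\vee)=\epsilon_i$ if $\bp\in\mathbf D^{\typ 2}$. So the inequality to prove is: every type~$1$ Dyck path meets $\T^{\bs}$ in at most one element, and every type~$2$ path meets it in at most $\epsilon_i$ elements.

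Next I would unwind the hypothesis. If $s_\beta\ne 0$ forces $\beta\in R^+_i(2n-i-1)$ (the first case), then the support of $\bs$ avoids the ``top row'' $\{\alpha_{p,2n-i}\}$ of the Hasse diagram of $R^+_i$, so $\T^{\bs}$ lies in $R^+_i(2n-i-1)$, and a type~$1$ Dyck path restricted to this region is literally a Dyck path in the $\tt A$-sense inside the staircase; then \cite[Proposition 3.7]{FFL11} (applied to the relevant $\tt A_{m}$-situation governing that staircase) gives that it contains at most one minimal element of the support. In the second case, $\beta\in(R^+_i\cap R^+_{i+1})(2n-i)$, the support avoids the ``left column'' $\{\alpha_{1,q}\}$, which is exactly the column that a type~$1$ path must pass through in its $\mathbf D_1^{\typ 1}$ or $\mathbf D_2^{\typ 1}$ portion; again the minimality argument of \cite[Proposition 3.7]{FFL11} applies after erasing that column. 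For the type~$2$ (double) Dyck paths: such a $\bp=\bp_1\cup\bp_2$ is a disjoint union of two single type~$1$ Dyck paths, each of which meets $\T^{\bs}$ in at most one element by the previous argument, so $\bp$ meets it in at most $2$ elements; when $i\le 2$ one checks directly (using $\mathbf D^{\typ 2}=\emptyset$ for $i=1$ and $\mathbf D^{\typ 2}=\{R^+_2\}$ for $i=2$) that the bound $\epsilon_i=1$ still holds, because in the $i=2$ case $R^+_2$ contains a unique minimal element of any nonzero support.

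Finally I would need to verify the two bookkeeping points that make the reduction legitimate: (1) that $\bt^{\bs}$ is supported on roots $\beta$ with $s_\beta\ne 0$, so that $\bs-\bt^{\bs}\in\Z_+^{|R^+_i|}$ — this is immediate from the definition of $\T^{\bs}\subseteq\Ra^{\bs}$; and (2) that no new violated inequality is created, i.e. for paths entirely inside the support-free region the sum only decreases. The main obstacle I expect is the careful matching of the type~$\tt B$ Hasse-diagram staircases $R^+_i(2n-i-1)$ and $(R^+_i\cap R^+_{i+1})(2n-i)$ with honest type-$\tt A$ root systems so that \cite[Proposition 3.7]{FFL11} can be quoted verbatim — in particular tracking which ``fold'' roots $\alpha_{p,q}$ with $q>n$ occur and checking that the $\mathbf D^{\typ 1}$-paths, read in these regions, are exactly the $\tt A$-Dyck paths of \cite{FFL11}, rather than some larger family. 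Everything else is the linear comparison of the $M_{\bp}$ values, which is routine.
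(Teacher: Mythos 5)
There is a genuine gap: your reduction reverses the inequality that actually has to be checked. Writing $c_{\mathbf p}:=M_{\mathbf p}(m\omega_i)-M_{\mathbf p}((m-1)\omega_i)$ (so $c_{\mathbf p}=1$ for single paths and $c_{\mathbf p}=\omega_i(\theta^\vee)$ for double paths), the condition $\sum_{\beta\in\mathbf p}(s_\beta-t_\beta)\le M_{\mathbf p}((m-1)\omega_i)$ is equivalent to $\sum_{\beta\in\mathbf p}t_\beta\ge\sum_{\beta\in\mathbf p}s_\beta-M_{\mathbf p}((m-1)\omega_i)$, i.e.\ one needs a \emph{lower} bound on how many elements of $\T^{\mathbf s}$ a path contains whenever $\mathbf s$ is saturated (or nearly saturated) on that path; your displayed condition $\sum_{\beta\in\mathbf p}t_\beta\le c_{\mathbf p}$ is an upper bound and is neither necessary nor sufficient. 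The upper bounds you then propose to verify are in fact trivially true: each single Dyck path is a chain for the order $\le$ used to define $\T^{\mathbf s}$ and $\T^{\mathbf s}$ is an antichain, so they meet in at most one point, and a double path in at most two. But these bounds only show (essentially) that $\bt^{\mathbf s}$ satisfies the inequalities of $S(\mathbf D,\omega_i)$, and knowing $\mathbf s\in S(\mathbf D,m\omega_i)$ and $\bt^{\mathbf s}\in S(\mathbf D,\omega_i)$ does not give $\mathbf s-\bt^{\mathbf s}\in S(\mathbf D,(m-1)\omega_i)$: if some path carries $\sum_{\beta\in\mathbf p}s_\beta=M_{\mathbf p}(m\omega_i)$ but misses $\T^{\mathbf s}$ entirely, the difference violates the $(m-1)$--inequality. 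That componentwise differences of lattice points do not automatically behave in this way is precisely why the Minkowski property (Lemma~\ref{reductlem} (ii)) is the delicate part of the paper.

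So the substantive statement to prove --- and the place where the support hypothesis $\supp(\mathbf s)\subseteq R^+_i(2n-i-1)$, resp.\ $(R^+_i\cap R^+_{i+1})(2n-i)$, and the type $\tt A$ input \cite[Proposition~3.7]{FFL11} are actually needed --- is: every Dyck path $\mathbf p$ with $\sum_{\beta\in\mathbf p}s_\beta>M_{\mathbf p}((m-1)\omega_i)$ must contain at least $\sum_{\beta\in\mathbf p}s_\beta-M_{\mathbf p}((m-1)\omega_i)$ elements of $\T^{\mathbf s}$. Your plan never addresses this; what you call bookkeeping point (2) is where the whole difficulty sits, and \cite[Proposition~3.7]{FFL11} is the subtraction statement of this kind, not an ``at most one minimal element'' statement. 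Two smaller slips point in the same direction: $\omega_i(\theta^\vee)\ne\epsilon_i$ in general ($\omega_2(\theta^\vee)=2$ while $\epsilon_2=1$, and $\omega_n(\theta^\vee)=1$ while $\epsilon_n=2$), and for $i=2$ a support can have several incomparable minimal elements in $R^+_2$ (e.g.\ $\alpha_{1,3}$ and $\alpha_{2,2}$), so the claimed uniqueness is false. For comparison, the paper records no written argument here --- it only notes that the lemma can be deduced from \cite[Proposition~3.7]{FFL11} --- but any such deduction has to go through the saturation/lower-bound statement above, not through the bounds you propose.
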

%\mathbf p\in \mathcal P\big(R^+_i(2n-i-1)\big) \mbox{ or } \mathbf p\in \mathcal P\big((R^+_i\cap R^+_{i+1})(2n-i)
For a multi--exponent $\bt\in \Z_+^{|R_i^+|}$ define 
$$\supp(\bt)=\{\beta\in R_i^+\mid t_{\beta}\neq 0\},$$
and let 
$$\mathbf T(1)=\{\bt\in \Z_+^{|R_i^+|}\mid t_{\beta}\leq 1, \forall \beta\in R_i^+\}.$$
The following proposition proves Lemma~\ref{reductlem} (ii) for $1\leq i\leq 3$, where the proof for $i=3$ is very technical and is given in the appendix (see Section~\ref{i3}). 
\begin{prop}\label{ghtzu}
Let $1\leq i\leq 3$ and $m\geq \epsilon_i$. Then we have
$$S(\mathbf D,m\omega_i)=S(\mathbf D,(m-\epsilon_i)\omega_i)+S(\mathbf D,\epsilon_i\omega_i).$$
\begin{proof}
The proof for $i=1$ is straightforward since $S(\mathbf D,m\omega_1)$ is determined by two inequalities. 
\textit{Proof for $i=2$:} Suppose $\bs\in S(\mathbf D,m\omega_2)$ and recall that $\mathbf D^{\typ 2}=\{R^+_2\}$. We will construct a multi--exponent $\bt\in S(\mathbf D,\omega_2)$ such that $\bs-\bt\in S(\mathbf D,(m-1)\omega_2)$. We prove the statement by induction on $s_{\theta}$ and start with $s_{\theta}=0$. In this case we note that $\sum_{\beta\in \mathbf p}(s_{\beta}-t_{\beta})\leq m-1$ for all $\mathbf p\in \mathbf D^{\typ 1}$ implies already $\bs-\bt\in S(\mathbf D,(m-1)\omega_2)$. The proof proceeds by several case considerations. For the readers convenience we illustrate each case by means of the Hasse diagram. We make the following convention: a bold dot (resp. square) in the Hasse diagram indicates that the corresponding entry of $\bs$ is zero (resp. non--zero).  \vspace{3pt}

\textbf{Case {1}:}
In this case we suppose $s_{2,2n-2}\neq 0$. 
\begin{center}
\begin{tikzpicture}
\node (311) at (0.25,0.5 + 4) {};
  \fill[black] (311) circle (1pt);
  \node (411) at (0.25,1+ 4) {};
  \fill[black] (411) circle (1pt);\node (611) at (0.75,0.5+ 4) {};
  \fill[black] (611) circle (1pt);
  \node (711) at (0.75,1+ 4) {};
  \fill[black] (0.66,4.91) rectangle (0.84,5.09);
	\node (911) at (1.25,0.5+ 4) {};
  \fill[black] (911) circle (2.5pt);\node (1211) at (-0.25,0.5+ 4) {};
  \fill[black] (1211) circle (1pt);
  \node (1311) at (-0.25,1+ 4) {};
 \fill[black] (1311) circle (1pt);\node (2711) at (-0.75,0.5+ 4) {};
  \fill[black] (2711) circle (1pt);\node (2811) at (-0.75,1+ 4) {};
  \fill[black] (2811) circle (1pt);
  \node (2911) at (-0.75,1.5+ 4) {};\draw[black]  (1211) -- (1311);\node (1611) at (-0.5,0.5+ 4) {\textcolor{black}{...}};\node (1611xxx) at (-0.5,1+ 4) {\textcolor{black}{...}};\node (1711) at (-1.25,0.5+ 4) {};
  \fill[black] (1711) circle (1pt);\fill[black] (1811) circle (1pt);
	\node (1811) at (-1.25,1+ 4) {};
  \fill[black] (1811) circle (1pt);\node (3111) at (-1.75,0.5+ 4) {};
  \fill[black] (3111) circle (1pt);
  \node (3211) at (-1.75,1+ 4) {};
  \fill[black] (3211) circle (1pt);\node (17112) at (-2.25,0.5+ 4) {};
  \fill[black] (17112) circle (1pt);
  \node (18112) at (-2.25,1+ 4) {};
  \fill[black] (18112) circle (1pt);\node (27112) at (-2.75,0.5+ 4) {};
  \fill[black] (27112) circle (1pt);
  \node (28112) at (-2.75,1+ 4) {};
  \fill[black] (28112) circle (1pt);\node (3112) at (-3.75,0.5 + 4) {};
  \fill[black] (3112) circle (1pt);
  \node (4112) at (-3.75,1+ 4) {};
  \fill[black] (4112) circle (1pt);\fill[black] (6112) circle (1pt);
  \node (7112) at (-4.25,1+ 4) {};
  \fill[black] (7112) circle (1pt);\node (6112) at (-4.25,0.5+ 4) {};\node (9112) at (-4.25,0.5+ 4) {};
  \fill[black] (9112) circle (1pt);\node (12112) at (-3.25,0.5+ 4) {};
  \fill[black] (12112) circle (1pt);
  \node (13112) at (-3.25,1+ 4) {};
 \fill[black] (13112) circle (1pt);\draw[black]  (12112) -- (13112);\draw[black] (27112) -- (28112);\node (16112) at (-3,1+ 4) {\textcolor{black}{...}};\node (16112xxx) at (-3,0.5+ 4) {\textcolor{black}{...}};
\draw[black] (311)  -- (411);\draw[black] (611)  -- (711);
   \draw[black] (311)  -- (611);
   \draw[black] (411)  -- (711);\draw[black] (611)  -- (911);\draw[black] (311)  -- (1211) ;
   \draw[black] (411)  -- (1311) ;\draw[black] (2711) -- (1711);
   \draw[black] (2811) -- (1811);\draw[black] (1711) -- (1811);
    \draw[black] (2711) -- (2811);\draw[black] (1711) -- (3111);
   \draw[black] (1811) -- (3211);\draw[black] (3111) -- (3211);\draw[black] (17112) -- (18112);\draw[black] (17112) -- (3111);
   \draw[black] (18112) -- (3211);\draw[black] (3112)  -- (4112);\draw[black] (6112)  -- (7112);
   \draw[black] (3112)  -- (6112);
   \draw[black] (4112)  -- (7112);\draw[black] (3112)  -- (12112) ;
   \draw[black] (4112)  -- (13112) ;\draw[black] (27112) -- (17112);
   \draw[black] (28112) -- (18112);
\end{tikzpicture}
\end{center}
If $s_{1,2}=s_{2,2}=0$ the statement follows from Lemma~\ref{Anfolg}. So let $\bt\in\mathbf T(1)$ be the multi--exponent with $\supp(\bt)=\{\al_{2,2n-2}, \al_{k,2}\}$, where $k=\min \{1\leq j\leq 2 \mid s_{j,2}\neq 0\}$. It is easy to see that $\bt\in S(\mathbf D,\omega_2)$ and $\bs-\bt\in S(\mathbf D,(m-1)\omega_2)$.\vspace{3pt}

\textbf{Case {2}:}
In this case we suppose that $s_{2,2n-2}=0$ and $s_{1,2}\neq 0$. 
\begin{center}
\begin{tikzpicture}
\node (311) at (0.25,0.5 + 4) {};
  \fill[black] (311) circle (1pt);
  \node (411) at (0.25,1+ 4) {};
  \fill[black] (411) circle (1pt);\node (611) at (0.75,0.5+ 4) {};
  \fill[black] (611) circle (1pt);
  \node (711) at (0.75,1+ 4) {};
  \fill[black] (711) circle (2.5pt);
	\node (911) at (1.25,0.5+ 4) {};
  \fill[black] (911) circle (2.5pt);\node (1211) at (-0.25,0.5+ 4) {};
  \fill[black] (1211) circle (1pt);
  \node (1311) at (-0.25,1+ 4) {};
 \fill[black] (1311) circle (1pt);\node (2711) at (-0.75,0.5+ 4) {};
  \fill[black] (2711) circle (1pt);\node (2811) at (-0.75,1+ 4) {};
  \fill[black] (2811) circle (1pt);
  \node (2911) at (-0.75,1.5+ 4) {};\draw[black]  (1211) -- (1311);\node (1611) at (-0.5,0.5+ 4) {\textcolor{black}{...}};\node (1611xxx) at (-0.5,1+ 4) {\textcolor{black}{...}};\node (1711) at (-1.25,0.5+ 4) {};
  \fill[black] (1711) circle (1pt);\fill[black] (1811) circle (1pt);
	\node (1811) at (-1.25,1+ 4) {};
  \fill[black] (1811) circle (1pt);\node (3111) at (-1.75,0.5+ 4) {};
  \fill[black] (3111) circle (1pt);
  \node (3211) at (-1.75,1+ 4) {};
  \fill[black] (3211) circle (1pt);\node (17112) at (-2.25,0.5+ 4) {};
  \fill[black] (17112) circle (1pt);
  \node (18112) at (-2.25,1+ 4) {};
  \fill[black] (18112) circle (1pt);\node (27112) at (-2.75,0.5+ 4) {};
  \fill[black] (27112) circle (1pt);
  \node (28112) at (-2.75,1+ 4) {};
  \fill[black] (28112) circle (1pt);\node (3112) at (-3.75,0.5 + 4) {};
  \fill[black] (3112) circle (1pt);
  \node (4112) at (-3.75,1+ 4) {};
  \fill[black] (4112) circle (1pt);\fill[black] (6112) circle (1pt);
  \node (7112) at (-4.25,1+ 4) {};
  \fill[black] (7112) circle (1pt);\node (6112) at (-4.25,0.5+ 4) {};
	\node (9112) at (-4.25,0.5+ 4) {};
  \fill[black] (-4.34,4.41) rectangle (-4.16,4.59);
	\node (12112) at (-3.25,0.5+ 4) {};
  \fill[black] (12112) circle (1pt);
  \node (13112) at (-3.25,1+ 4) {};
 \fill[black] (13112) circle (1pt);\draw[black]  (12112) -- (13112);\draw[black] (27112) -- (28112);\node (16112) at (-3,1+ 4) {\textcolor{black}{...}};\node (16112xxx) at (-3,0.5+ 4) {\textcolor{black}{...}};
\draw[black] (311)  -- (411);\draw[black] (611)  -- (711);
   \draw[black] (311)  -- (611);
   \draw[black] (411)  -- (711);\draw[black] (611)  -- (911);\draw[black] (311)  -- (1211) ;
   \draw[black] (411)  -- (1311) ;\draw[black] (2711) -- (1711);
   \draw[black] (2811) -- (1811);\draw[black] (1711) -- (1811);
    \draw[black] (2711) -- (2811);\draw[black] (1711) -- (3111);
   \draw[black] (1811) -- (3211);\draw[black] (3111) -- (3211);\draw[black] (17112) -- (18112);\draw[black] (17112) -- (3111);
   \draw[black] (18112) -- (3211);\draw[black] (3112)  -- (4112);\draw[black] (6112)  -- (7112);
   \draw[black] (3112)  -- (6112);
   \draw[black] (4112)  -- (7112);\draw[black] (3112)  -- (12112) ;
   \draw[black] (4112)  -- (13112) ;\draw[black] (27112) -- (17112);
   \draw[black] (28112) -- (18112);
\end{tikzpicture}
\end{center}

If $s_{1,2n-2}=0$ the statement follows as above from Lemma~\ref{Anfolg}. So let $\bt\in\mathbf T(1)$ be the multi--exponent with $\supp(\bt)=\{\al_{1,2}, \al_{1,2n-2}\}$. It is straightforward to prove that $\bt\in S(\mathbf D,\omega_2)$ and $\bs-\bt\in S(\mathbf D,(m-1)\omega_2)$.\vspace{3pt}

\textbf{Case {3}}: In this case we suppose $s_{1,2}=s_{2,2n-2}=0$. Again with Lemma~\ref{Anfolg} we can assume that $s_{2,2}\neq 0$ and $s_{1,2n-2}\neq 0$.
\begin{center}
\begin{tikzpicture}
\node (311) at (0.25,0.5 + 4) {};
  \fill[black] (311) circle (1pt);
  \node (411) at (0.25,1+ 4) {};
  \fill[black] (411) circle (1pt);
	\node (611) at (0.75,0.5+ 4) {};
  \fill[black] (0.66,4.41) rectangle (0.84,4.59);
	
  \node (711) at (0.75,1+ 4) {};
  \fill[black] (711) circle (2.5pt);
	\node (911) at (1.25,0.5+ 4) {};
  \fill[black] (911) circle (2.5pt);
	\node (1211) at (-0.25,0.5+ 4) {};
  \fill[black] (1211) circle (1pt);
  \node (1311) at (-0.25,1+ 4) {};
 \fill[black] (1311) circle (1pt);\node (2711) at (-0.75,0.5+ 4) {};
  \fill[black] (2711) circle (1pt);\node (2811) at (-0.75,1+ 4) {};
  \fill[black] (2811) circle (1pt);
  \node (2911) at (-0.75,1.5+ 4) {};\draw[black]  (1211) -- (1311);\node (1611) at (-0.5,0.5+ 4) {\textcolor{black}{...}};\node (1611xxx) at (-0.5,1+ 4) {\textcolor{black}{...}};\node (1711) at (-1.25,0.5+ 4) {};
  \fill[black] (1711) circle (1pt);\fill[black] (1811) circle (1pt);
	\node (1811) at (-1.25,1+ 4) {};
  \fill[black] (1811) circle (1pt);\node (3111) at (-1.75,0.5+ 4) {};
  \fill[black] (3111) circle (1pt);
  \node (3211) at (-1.75,1+ 4) {};
  \fill[black] (3211) circle (1pt);\node (17112) at (-2.25,0.5+ 4) {};
  \fill[black] (17112) circle (1pt);
  \node (18112) at (-2.25,1+ 4) {};
  \fill[black] (18112) circle (1pt);\node (27112) at (-2.75,0.5+ 4) {};
  \fill[black] (27112) circle (1pt);
  \node (28112) at (-2.75,1+ 4) {};
  \fill[black] (28112) circle (1pt);\node (3112) at (-3.75,0.5 + 4) {};
  \fill[black] (3112) circle (1pt);
  \node (4112) at (-3.75,1+ 4) {};
  \fill[black] (4112) circle (1pt);\fill[black] (6112) circle (1pt);
  \node (7112) at (-4.25,1+ 4) {};
  \fill[black] (-4.34,4.91) rectangle (-4.16,5.09);
	\node (6112) at (-4.25,0.5+ 4) {};
	\node (9112) at (-4.25,0.5+ 4) {};
	\fill[black] (9112) circle (2.5pt);
  %\fill[black] (-4.34,4.41) rectangle (-4.16,4.59);
	\node (12112) at (-3.25,0.5+ 4) {};
  \fill[black] (12112) circle (1pt);
  \node (13112) at (-3.25,1+ 4) {};
 \fill[black] (13112) circle (1pt);\draw[black]  (12112) -- (13112);\draw[black] (27112) -- (28112);\node (16112) at (-3,1+ 4) {\textcolor{black}{...}};\node (16112xxx) at (-3,0.5+ 4) {\textcolor{black}{...}};
\draw[black] (311)  -- (411);\draw[black] (611)  -- (711);
   \draw[black] (311)  -- (611);
   \draw[black] (411)  -- (711);\draw[black] (611)  -- (911);\draw[black] (311)  -- (1211) ;
   \draw[black] (411)  -- (1311) ;\draw[black] (2711) -- (1711);
   \draw[black] (2811) -- (1811);\draw[black] (1711) -- (1811);
    \draw[black] (2711) -- (2811);\draw[black] (1711) -- (3111);
   \draw[black] (1811) -- (3211);\draw[black] (3111) -- (3211);\draw[black] (17112) -- (18112);\draw[black] (17112) -- (3111);
   \draw[black] (18112) -- (3211);\draw[black] (3112)  -- (4112);\draw[black] (6112)  -- (7112);
   \draw[black] (3112)  -- (6112);
   \draw[black] (4112)  -- (7112);\draw[black] (3112)  -- (12112) ;
   \draw[black] (4112)  -- (13112) ;\draw[black] (27112) -- (17112);
   \draw[black] (28112) -- (18112);
\end{tikzpicture}
\end{center}
Let $\bt\in\mathbf T(1)$ be the multi--exponent with $\supp(\bt)=\{\al_{2,2}, \al_{1,k}\}$, where $k=\min \{3\leq j\leq 2n-2 \mid s_{1,j}\neq 0\}$ (see the red dots below). 
\begin{center}
\begin{tikzpicture}
\node (311) at (0.25,0.5 + 4) {};
  \fill[black] (311) circle (1pt);
  \node (411) at (0.25,1+ 4) {};
  \fill[black] (411) circle (1pt);
	\node (611) at (0.75,0.5+ 4) {};
  \fill[black] (0.66,4.41) rectangle (0.84,4.59);
	
  \node (711) at (0.75,1+ 4) {};
  \fill[black] (711) circle (2.5pt);
	\node (911) at (1.25,0.5+ 4) {};
  \fill[black] (911) circle (2.5pt);
	\node (1211) at (-0.25,0.5+ 4) {};
  \fill[black] (1211) circle (1pt);
  \node (1311) at (-0.25,1+ 4) {};
 \fill[black] (1311) circle (1pt);\node (2711) at (-0.75,0.5+ 4) {};
  \fill[black] (2711) circle (1pt);\node (2811) at (-0.75,1+ 4) {};
  \fill[black] (2811) circle (1pt);
  \node (2911) at (-0.75,1.5+ 4) {};\draw[black]  (1211) -- (1311);\node (1611) at (-0.5,0.5+ 4) {\textcolor{black}{...}};\node (1611xxx) at (-0.5,1+ 4) {\textcolor{black}{...}};\node (1711) at (-1.25,0.5+ 4) {};
  \fill[black] (1711) circle (1pt);\fill[black] (1811) circle (1pt);
	\node (1811) at (-1.25,1+ 4) {};
  \fill[black] (1811) circle (1pt);\node (3111) at (-1.75,0.5+ 4) {};
  \fill[black] (3111) circle (1pt);
  \node (3211) at (-1.75,1+ 4) {};
  \fill[black] (3211) circle (1pt);
	
	\node (17112) at (-2.25,0.5+ 4) {};
  \fill[red] (-2.34,4.41) rectangle (-2.16,4.59);
	
  \node (18112) at (-2.25,1+ 4) {};
  \fill[black] (18112) circle (1pt);\node (27112) at (-2.75,0.5+ 4) {};
  \fill[black] (27112) circle (2.5pt);
  \node (28112) at (-2.75,1+ 4) {};
  \fill[black] (28112) circle (1pt);
	\node (3112) at (-3.75,0.5 + 4) {};
  \fill[black] (3112) circle (2.5pt);
  \node (4112) at (-3.75,1+ 4) {};
  \fill[black] (4112) circle (1pt);\fill[black] (6112) circle (1pt);
  \node (7112) at (-4.25,1+ 4) {};
  \fill[red] (-4.34,4.91) rectangle (-4.16,5.09);
	\node (6112) at (-4.25,0.5+ 4) {};
	\node (9112) at (-4.25,0.5+ 4) {};
	\fill[black] (9112) circle (2.5pt);
  %\fill[black] (-4.34,4.41) rectangle (-4.16,4.59);
	\node (12112) at (-3.25,0.5+ 4) {};
  \fill[black] (12112) circle (2.5pt);
  \node (13112) at (-3.25,1+ 4) {};
 \fill[black] (13112) circle (1pt);\draw[black]  (12112) -- (13112);\draw[black] (27112) -- (28112);\node (16112) at (-3,1+ 4) {\textcolor{black}{...}};\node (16112xxx) at (-3,0.5+ 4) {\textcolor{black}{...}};
\draw[black] (311)  -- (411);\draw[black] (611)  -- (711);
   \draw[black] (311)  -- (611);
   \draw[black] (411)  -- (711);\draw[black] (611)  -- (911);\draw[black] (311)  -- (1211) ;
   \draw[black] (411)  -- (1311) ;\draw[black] (2711) -- (1711);
   \draw[black] (2811) -- (1811);\draw[black] (1711) -- (1811);
    \draw[black] (2711) -- (2811);\draw[black] (1711) -- (3111);
   \draw[black] (1811) -- (3211);\draw[black] (3111) -- (3211);\draw[black] (17112) -- (18112);\draw[black] (17112) -- (3111);
   \draw[black] (18112) -- (3211);\draw[black] (3112)  -- (4112);\draw[black] (6112)  -- (7112);
   \draw[black] (3112)  -- (6112);
   \draw[black] (4112)  -- (7112);\draw[black] (3112)  -- (12112) ;
   \draw[black] (4112)  -- (13112) ;\draw[black] (27112) -- (17112);
   \draw[black] (28112) -- (18112);
\end{tikzpicture}
\end{center}
It follows $\bt\in S(\mathbf D,\omega_2)$. Suppose we are given a Dyck path $\mathbf p\in \mathbf D_1^{\typ 1}$ with $\sum_{\beta\in \mathbf p}(s_{\beta}-t_{\beta})=m$, which is only possible if $t_{\beta}=0$ for all $\beta\in \mathbf p$. 
It follows that $\mathbf p$ is of the form
$$\mathbf p=\{\al_{1,2},\dots,\al_{1,p},\al_{2,p},\dots,\al_{2,2n-3}\},\ \mbox{for some $3\leq p<k$.}$$
Since $s_{1,r}=0$ for all $2\leq r<k$ we get
$$\sum_{\beta\in \mathbf p}s_{\beta}\leq s_{2,3}+\cdots+s_{2,2n-3}\leq (s_{2,2}-1)+s_{2,3}+\cdots+s_{2,2n-3}\leq m-1,$$
which is a contradiction. Similarly, for $\mathbf p\in \mathbf D_2^{\typ 1}$ we get $\sum_{\beta\in \mathbf p}(s_{\beta}-t_{\beta})\leq m-1$.  Hence $\bs-\bt\in S(\mathbf D,(m-1)\omega_2)$ and the induction begins.\vspace{3pt}

Assume that $s_{\theta}\neq 0$ and let $\bs^1$ be the multi--exponent obtained from $\bs$ by replacing $s_{\theta}$ by $s_{\theta}-1$. By induction there exists a multi--exponent $\bt^1\in S(\mathbf D,\omega_2)$ such that $\br^1:=\bs^1-\bt^1 \in S(\mathbf D,(m-1)\omega_2)$. If $\sum_{\beta\in R_2^+}t^1_{\beta}\leq 1$ we set $\bt$ to be the multi--exponent obtained from $\bt^1$ by replacing $t^1_\theta$ by $t^1_\theta+1$. Then we get $\bt\in S(\mathbf D,\omega_2)$ and $\bs-\bt=\br^1$. Otherwise we set $\br$ to be the multi--exponent obtained from $\br^1$ by replacing $r^1_{\theta}$ by $r^1_{\theta}+1$. Since $\sum_{\beta\in R_2^+}t^1_{\beta}= 2$, we get $\sum_{\beta\in R_2^+}r_{\beta}\leq 2m-2$ and therefore 
$$\bs=\br+\bt^1, \text{ and } \bs-\bt^1\in S(\mathbf D,(m-1)\omega_2).$$

\end{proof}
\end{prop}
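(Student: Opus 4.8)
\emph{Proof proposal.} I would first dispose of the easy inclusion: $S(\mathbf D,(m-\epsilon_i)\omega_i)+S(\mathbf D,\epsilon_i\omega_i)\subseteq S(\mathbf D,m\omega_i)$ holds because each $M_{\mathbf p}(\cdot)$ is linear in the weight, so $M_{\mathbf p}((m-\epsilon_i)\omega_i)+M_{\mathbf p}(\epsilon_i\omega_i)=M_{\mathbf p}(m\omega_i)$ and the defining inequalities of the three polytopes simply add. Thus the statement reduces to the Minkowski decomposition: given $\bs\in S(\mathbf D,m\omega_i)$ with $m\geq\epsilon_i$, I must produce $\bt\in S(\mathbf D,\epsilon_i\omega_i)$ with $t_\beta\leq s_\beta$ for all $\beta$ and $\bs-\bt\in S(\mathbf D,(m-\epsilon_i)\omega_i)$. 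For $i=1$ this is routine, since $\mathbf D$ then consists only of the two type~1 Dyck paths $\{\al_{1,1},\dots,\al_{1,2n-2}\}$ and $\{\al_{1,2},\dots,\al_{1,2n-1}\}$, so $S(\mathbf D,m\omega_1)$ is cut out by two inequalities; for $i=2$ the argument given above (induction on $s_\theta$ together with a short case distinction on the corner entries $s_{1,2},s_{2,2},s_{1,2n-2},s_{2,2n-2}$) settles it. So the real work is the case $i=3$, where $\epsilon_3=2$.

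For $i=3$ the plan is to run the same scheme, but with a substantially longer base step. I would induct on $s_\theta=s_{\al_{1,2n-1}}$. The inductive step $s_\theta>0$ should be formal, exactly as for $i=2$: lower $s_\theta$ by one to get $\bs^1$, apply the induction hypothesis to obtain $\bt^1\in S(\mathbf D,2\omega_3)$ with $\br^1:=\bs^1-\bt^1\in S(\mathbf D,(m-2)\omega_3)$, and then reinsert the $\theta$--box into $\bt^1$ if the type~2 inequalities through $\theta$ leave room there, and otherwise into $\br^1$, using that the type~2 bound for $\br$ is $2(m-2)\,\omega_3(\theta^\vee)$. The content therefore sits in the base case $s_\theta=0$, i.e. $\bs$ supported on $R_3^+(2n-2)$. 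When $\bs$ is in addition supported on $R_3^+(2n-4)$, respectively on $(R_3^+\cap R_4^+)(2n-3)$, I would strip the bottom layer with Lemma~\ref{Anfolg}, reach $S(\mathbf D,(m-1)\omega_3)$, strip one more layer, reach $S(\mathbf D,(m-2)\omega_3)$, and take $\bt$ to be the sum of the two layers; the only thing to check is that this sum still satisfies the $2\omega_3$--inequalities, which is where the shape of the two minimal layers enters. For the remaining configurations I would make a case distinction according to which extremal entries of $\bs$ are nonzero — the lower boundary entries $s_{1,3},s_{2,3},s_{3,3}$ and the upper--right boundary entries $s_{p,2n-p}$ — and in each case write down an explicit correcting $\bt\in S(\mathbf D,2\omega_3)$ of small support meeting every almost--tight Dyck path, then verify $\bs-\bt\in S(\mathbf D,(m-2)\omega_3)$ by inspecting the finitely many almost--tight paths, whose shapes through the relevant corner are forced by the geometry of the Hasse diagram of $R_3^+$.

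The step I expect to be the main obstacle is precisely this base--case bookkeeping. Because $R_3^+$ has ``width $3$'', there are many more shapes of type~1 Dyck paths that can be within $2$ of being tight than occur in the $i=2$ case, and because $\epsilon_3=2$ the correcting $\bt$ has to absorb two units on \emph{every} such tight type~1 path (each of which drops from bound $m$ to bound $m-2$) while \emph{simultaneously} absorbing the right amount on every tight type~2 (double Dyck path) inequality (dropping from $2m\,\omega_3(\theta^\vee)$ to $2(m-2)\,\omega_3(\theta^\vee)$), all subject to $t_\beta\leq s_\beta$. Exhibiting a single $\bt$ that does all of this uniformly, rather than path by path, is the delicate point, and is why I would, as the authors do, carry out the $i=3$ argument in a separate appendix (Section~\ref{i3}).
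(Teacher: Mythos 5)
Your overall scheme coincides with the paper's: the inclusion $S(\mathbf D,(m-\epsilon_i)\omega_i)+S(\mathbf D,\epsilon_i\omega_i)\subseteq S(\mathbf D,m\omega_i)$ is indeed immediate from linearity of the bounds $M_{\bp}$, the $i=1$ case is the two-inequality observation, the $i=2$ case is handled by induction on $s_{\theta}$ plus a case distinction on the corner entries, and for $i=3$ the paper likewise inducts on $s_{\theta}$ (using that $\theta$ lies on a unique type~2 path, so the $\theta$-box can be reinserted either into $\bt^1$ or into $\br^1$) and thereby reduces to multi-exponents with $s_{\theta}=0$. The problem is that for $i=3$ you never actually carry out the base case: outside the configurations covered by Lemma~\ref{Anfolg} you only announce a case distinction ``according to which extremal entries of $\bs$ are nonzero'' and promise an explicit correcting $\bt\in S(\mathbf D,2\omega_3)$ of small support, and you yourself flag the construction of such a $\bt$ as the unresolved obstacle. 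That construction is precisely the content of the statement for $i=3$ and occupies the whole of Section~\ref{i3}: the cases are organised by whether $s_{3,2n-3}\neq 0$, by the size of the layer $\bt^{\bs'}$ associated to the truncated exponent, and they culminate in the configuration where $s_{1,3},\,s_{1,2n-2},\,s_{2,2n-3}$ are all nonzero and $\sum_{k}s_{1,k}=m$, in which \emph{no} $\bt\in S(\mathbf D,\omega_3)$ with $\bs-\bt\in S(\mathbf D,(m-1)\omega_3)$ exists and a genuinely two-layer $\bt\in S(\mathbf D,2\omega_3)$, supported on $\{\al_{3,j_3},\al_{1,j_1},\al_{1,3},\al_{1,2n-2},\al_{2,2n-3}\}$, has to be exhibited and checked against every type~1 and type~2 path (compare Remark~\ref{remspat}(1), which shows why any purely one-layer strategy must break down). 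Without this analysis your text is a plan for the $i=3$ case, not a proof of it; the $i=2$ case is likewise only sketched, though there the missing verifications are of the routine kind the paper carries out in Cases 1--3.

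Two smaller inaccuracies in the $i=3$ outline: the type~2 bound in $P(\mathbf D,m\omega_3)$ is $m\,\omega_3(\theta^{\vee})=2m$, not $2m\,\omega_3(\theta^{\vee})$ (and correspondingly $2(m-2)$ for $(m-2)\omega_3$), a slip that does not affect the reinsertion argument but should be fixed. And in the situations where Lemma~\ref{Anfolg} can be applied twice, the sum of the two stripped layers lies in $S(\mathbf D,2\omega_3)$ automatically, since each layer is supported on an antichain for the order $\al_{j,k}\leq\al_{p,r}$ and hence meets every type~1 path (a chain) at most once and every type~2 path at most twice; so the point requiring care there is not ``the $2\omega_3$-inequalities'' but whether the hypotheses of Lemma~\ref{Anfolg} hold at all, i.e.\ whether the support of $\bs$ (and then of $\bs-\bt^{\bs}$) lies in $R^+_3(2n-4)$ or $(R^+_3\cap R^+_4)(2n-3)$ — which is exactly why the remaining boundary configurations need the hand-made corrections of Section~\ref{i3}.
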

In order to cover the remaining special cases, we prove Lemma~\ref{reductlem} (ii) for $n=i=4$. Let $\bs\in S(\mathbf D,m\omega_4)$. We prove the Minkowski property by induction on $s_{4,4}+s_{1,7}$. If $s_{4,4}=s_{1,7}=0$, we consider two cases.\vspace{3pt}

\textbf{Case {1}:} In this case we suppose that $s_{1,6},s_{2,5}$ and $s_{3,4}$ are non--zero.

\begin{center}
\begin{tikzpicture}
[rotate=270]

%% BN Teil

\node (3112) at (-3.75,0.5 + 4) {};
  \fill[black] (3112) circle (1pt);
  \node (4112) at (-3.75,1+ 4) {};
  \fill[black] (-3.66,4.91) rectangle (-3.84,5.09);
  \node (5112) at (-3.75,1.5+ 4) {};
  \fill[black] (5112) circle (1pt);
  \node (6112) at (-4.25,0.5+ 4) {};
  \fill[black] (-4.16,4.41) rectangle (-4.34,4.59);
  \node (7112) at (-4.25,1+ 4) {};
  \fill[black] (7112) circle (1pt);

  \node (9112) at (-4.25,0.5+ 4) {};
  \fill[black] (9112) circle (1pt);
  
  \node (12112) at (-3.25,0.5+ 4) {};
  \fill[black] (12112) circle (1pt);
  \node (13112) at (-3.25,1+ 4) {};
 \fill[black] (13112) circle (1pt);
  \node (14112) at (-3.25,1.5+ 4) {};
  \fill[black] (-3.16,5.41) rectangle (-3.34,5.59);

\node (39112) at (-3.25,2+ 4) {};
 \fill[black] (39112) circle (2.5pt);
 \draw  (13112) -- (14112) -- (39112);
\draw[black]  (12112) -- (13112);

\node (9112) at (-4.75,0.5+ 4) {};
 \fill[black] (9112) circle (2.5pt);

         \draw[black] (3112)  -- (4112);
   \draw (4112)  -- (5112);
   %\draw (2)  -- (6);
   \draw[black] (6112)  -- (7112);
   \draw[black] (3112)  -- (6112);
   \draw[black] (4112)  -- (7112);
   %\draw (2)  -- (8);
   \draw[black] (6112)  -- (9112);
   %\draw (8)  -- (10);
   %\draw (8)  -- (9);
   %\draw (1)  -- (11) ;
   \draw[black] (3112)  -- (12112) ;
   \draw[black] (4112)  -- (13112) ;
   \draw (5112)  -- (14112) ;

\end{tikzpicture}
\end{center}

 Then we define $\bt\in S(\mathbf D,2\omega_4)$ to be the multi--exponent with $t_{1,6}=t_{2,5}=t_{3,4}=1$ and $0$ else. It is immediate that the difference $\bs-\bt\in S(\mathbf D, (m-2)\omega_4)$.

\vspace{3pt}

\textbf{Case {2}:} In this case we suppose that one of the entries $s_{1,6},s_{2,5}$ or $s_{3,4}$ is zero. Then there is a Dyck path $\mathbf p$ such that $\bs$ is supported on $\bp$ and the statement is immediate.

\vspace{3pt}
So suppose that either $s_{4,4}\neq 0$ or $s_{1,7}\neq 0$. The proof in both cases is similar, so that we can assume $s_{4,4}\neq 0$. 

\begin{center}
\begin{tikzpicture}
[rotate=270]

%% BN Teil

\node (3112) at (-3.75,0.5 + 4) {};
  \fill[black] (3112) circle (1pt);
  \node (4112) at (-3.75,1+ 4) {};
  \fill[black] (4112) circle (1pt);
  \node (5112) at (-3.75,1.5+ 4) {};
  \fill[black] (5112) circle (1pt);
  \node (6112) at (-4.25,0.5+ 4) {};
  \fill[black] (6112) circle (1pt);
  \node (7112) at (-4.25,1+ 4) {};
  \fill[black] (7112) circle (1pt);

  \node (9112) at (-4.25,0.5+ 4) {};
  \fill[black] (9112) circle (1pt);
  
  \node (12112) at (-3.25,0.5+ 4) {};
  \fill[black] (12112) circle (1pt);
  \node (13112) at (-3.25,1+ 4) {};
 \fill[black] (13112) circle (1pt);
  \node (14112) at (-3.25,1.5+ 4) {};
  \fill[black] (14112) circle (1pt);

\node (39112) at (-3.25,2+ 4) {};
 \fill[black] (39112) circle (1pt);
 \draw  (13112) -- (14112) -- (39112);
\draw[black]  (12112) -- (13112);

\node (9112) at (-4.75,0.5+ 4) {};
 \fill[black] (-4.66,4.41) rectangle (-4.84,4.59);

         \draw[black] (3112)  -- (4112);
   \draw (4112)  -- (5112);
   %\draw (2)  -- (6);
   \draw[black] (6112)  -- (7112);
   \draw[black] (3112)  -- (6112);
   \draw[black] (4112)  -- (7112);
   %\draw (2)  -- (8);
   \draw[black] (6112)  -- (9112);
   %\draw (8)  -- (10);
   %\draw (8)  -- (9);
   %\draw (1)  -- (11) ;
   \draw[black] (3112)  -- (12112) ;
   \draw[black] (4112)  -- (13112) ;
   \draw (5112)  -- (14112) ;

\end{tikzpicture}
\end{center}

We set $\mathbf{s}^1$ to be the multi--exponent obtained from $\bs$ by replacing $s_{4,4}$ by $s_{4,4}-1$. By induction we can find $\bt^1\in S(\mathbf D,2\omega_4)$ such that $\bs^{1}-\bt^{1}\in S(\mathbf D,(m-2)\omega_4)$. Now we define $\bt$ to be the multi--exponent obtained from $\bt^1$ by replacing $t_{4,4}$ by $t_{4,4}+1$ if the resulting element stays in $S(\mathbf D,2\omega_4 )$ and otherwise we set $\bt=\bt^1$. In either case $\bs-\bt\in S(\mathbf D,(m-2)\omega_4)$.

\begin{rem}\label{remspat}\mbox{}
\begin{enumerate}
\item
 The set $S(\mathbf D,m\omega_i)$ does not satisfy the usual Minkowski sum property in general, e.g. the element $(m_{\beta})\in S(\mathbf D,2\omega_4)$ ($n=4$) with $m_{\beta}=1$ for $\beta\in\{\alpha_{1,6},\alpha_{2,5},\alpha_{3,4}\}$ and else $0$ is not contained in $S(\mathbf D,\omega_4)+S(\mathbf D,\omega_4)$. Another example is the element $(m_{\beta})\in S(\mathbf D,2\omega_3)$ ($n=4$) with $m_{\beta}=1$ for $\beta\in\{\alpha_{1,3},\alpha_{1,4},\alpha_{1,6},\alpha_{2,5},\alpha_{3,3}\}$ and else $0$.
\item
The polytope $P(\mathbf D,\epsilon_im\omega_i)$ is defined by inequalities with integer coefficients and hence the Minkowski property in Lemma~\ref{reductlem} (ii) ensures that $P(\mathbf D,\epsilon_im\omega_i)$ is a normal polytope for $1\leq i\leq 3$ and $n$ arbitrary or $i$ arbitrary and $1\leq n\leq 4$. The proof is exactly the same as in \cite[Lemma 8.7]{FFL13}.
\end{enumerate}
\end{rem}

Summarizing, we have proved Conjecture~\ref{conjbasistypeB} for arbitrary $n$ and $1\leq i\leq 3$ or arbitrary $i$ and $1\leq n\leq 4$. Moreover the proof of the general case can be reduced to the proof of Lemma~\ref{reductlem} (ii) and Lemma~\ref{reductlem} (iii).
%%%%%%%%%%%%%%%%%%%%%%%%%%%%%%%%%%%%%%%%%%%%%%%%%%%%%%%%%%%%%%%
%%%%%%%%%%%%%%%%%%%%%%%%%%%%%%%%%%%%%%%%%%%%%%%%%%%%%%%%%%%%%%%
\section{Dyck path, polytopes and PBW bases for \texorpdfstring{$\mathfrak{so}_7$}{so7}}\label{section5} If the Lie algebra is of type $\tt B_3$ we shall associate to any dominant integral weight $\lambda$ a normal polytope and prove that a basis of $\gr V(\lambda)$ can be parametrized by the lattice points of this polytope. We emphasize at this point that the polytopes we will define for $\tt B_3$ are quasi compatible with the polytopes defined in Section~\ref{dyckfori}; see Remark~\ref{remvertr} for more details. 
\subsection{}\label{polytope}We use the following abbreviations:
$$\beta_1:=\al_{1,5}, \beta_2:=\al_{1,4}, \beta_3:=\al_{2,4}, \beta_4:=\al_{1,3}, \beta_5:=\al_{2,3}, \beta_6:=\al_{1,2}, \beta_7:=\al_{2,2}, \beta_8:=\al_{3,3}, \beta_9:=\al_{1,1}.$$

\begin{center}
\begin{tikzpicture}[scale=1.5]
%[rotate=90]

 \node[label=right: $\beta_1$] (2511) at (-0.75,2.5+ 4) {};

 \fill[black] (2511) circle (1pt);
  \node[label=below: $\beta_2$] (2311) at (-1.25,2.5+ 4) {};
  \fill[black] (2311) circle (1pt);
  \node[label=right: $\beta_3$] (2411) at (-1.25,3+ 4) {};
  \fill[black] (2411) circle (1pt);
 
  \node[label=below: $\beta_4$] (3611) at (-1.75,2.5+ 4) {};
  \fill[black] (3611) circle (1pt);
	\node[label=above left: $\beta_5\!$] (37112) at (-1.7,3+ 3.9) {};
  \node (3711) at (-1.75,3+ 4) {};
  \fill[black] (3711) circle (1pt);
  \node[label=right: $\beta_8$] (3811) at (-1.75,3.5+ 4) {};
  \fill[black] (3811) circle (1pt);

  \node[label=below: $\beta_6$] (23112) at (-2.25,2.5+ 4) {};
  \fill[black] (23112) circle (1pt);
  \node[label=left: $\beta_7$] (24112) at (-2.25,3+ 4) {};
  \fill[black] (24112) circle (1pt);

 \node[label=left: $\beta_9$] (25112) at (-2.75,2.5+ 4) {};
\fill[black] (25112) circle (1pt);

    \draw (3611) -- (3711) -- (3811);
   \draw (2311) -- (2511);
    \draw (2311) -- (2411);
    \draw (2311) -- (3611);
    \draw (2411) -- (3711);

    \draw (23112) -- (24112);
    \draw (23112) -- (3611);
    \draw (24112) -- (3711);

   \draw (23112) -- (25112);
    \draw (23112) -- (24112);

\end{tikzpicture}
\end{center}

Let $\lambda=m_1\omega_1+m_2\omega_2+m_3\omega_3$, $s_i:=s_{\beta_i}$ for $1\leq i\leq 9$ and set $(a,b,c):=am_1+bm_2+cm_3$. We denote by $P(\lambda)\subseteq \R_+^9$ the polytope determined by the following inequalities:
\setlength{\multicolsep}{0,5cm}
\begin{multicols}{2}
\begin{enumerate}
\item  $s_2+s_3+s_4+s_8+s_9\leq (1,1,1)$ 
\item $s_3+s_4+s_5+s_8+s_9\leq (1,1,1)$
\item $s_4+s_5+s_6+s_8+s_9\leq (1,1,1)$ 
\item $s_5+s_6+s_7+s_8+s_9\leq (1,1,1)$
\item $s_3+s_5+s_8\leq (0,1,1)$
\item $s_5+s_7+s_8\leq (0,1,1)$
\item $s_6+s_7+s_9\leq (1,1,0)$
\item $s_7\leq (0,1,0)$
\item $s_8\leq (0,0,1)$ 
\item $s_9\leq (1,0,0)$
\item $s_3+s_4+s_5+s_6+s_7+s_8+s_9\leq (1,2,1)$
\item $s_1+s_2+s_3+s_4+s_5+s_7+s_9\leq (1,2,1)$
\item $s_1+s_3+s_4+s_5+s_6+s_7+s_9\leq (1,2,1)$
\item $s_2+s_3+s_4+s_5+s_7+s_8+s_9\leq (1,2,1)$
\item $s_1+s_2+s_3+s_4+s_5+s_6+s_7+2s_9\leq (2,2,1)$
\item $s_2+s_3+s_4+s_5+s_6+s_7+s_8+2s_9\leq (2,2,1)$
\item $s_1+s_2+2(s_3+s_4+s_5)+s_6+s_7+s_8+2s_9\leq (2,3,2)$
\item $s_2+2(s_3+s_4+s_5)+s_6+s_7+2(s_8+s_9)\leq (2,3,2)$
\item $s_3+s_4+2s_5+s_6+s_7+2s_8+s_9\leq (1,2,2)$
\end{enumerate}
\end{multicols}
As before we set $S(\lambda)=P(\lambda)\cap \mathbb Z_+^9$.
\begin{rem}\label{remvertr}
Assume that $\lambda=m\omega_i$ for some $1\leq i \leq 3$. If $i\neq 1$, then the polytope $P(\mathbf D,m\omega_i)$ defined in Section~\ref{dyckfori} coincides with the polytope given by the inequalities $(1)-(19)$. If $i=1$ these polytopes slightly differ in the following sense:
the polytope $P(\mathbf D,m\omega_1)$ from Section~\ref{dyckfori} is determined by the inequalities 
\begin{multicols}{2}
\begin{enumerate}
\item  $s_1+s_2+s_4+s_6\leq m$ 
\item  $s_2+s_4+s_6+s_9\leq m$
\end{enumerate}
\end{multicols}
whereas the above polytope can be simplified and is determined by the inequalities
\setlength{\multicolsep}{0,2cm}
\begin{multicols}{2}
\begin{enumerate}
\item  $s_1+s_4+s_6+s_9\leq m$ 
\item  $s_1+s_2+s_4+s_9\leq m$.
\end{enumerate}
\end{multicols}
We need to change $P(\mathbf D,\omega_1)$ for type $\tt B_3$ in order to ensure the Minkowski--sum property.
\end{rem}
\subsection{}
For the rest of this section we prove the following theorem.
\begin{thm}\label{mainthmsect5}
Let $\lie g$ be of type $\tt B_3$.
\begin{enumerate} 
\item The lattice points $S(\lambda)$ parametrize a basis of $V(\lambda)$ and $\gr V(\lambda)$ respectively. In particular, 
$$\{\X^{\mathbf s}v_{\lambda}\mid \mathbf s \in S(\lambda)\}$$
forms a basis of $\gr V(\lambda)$.
%\item We have
%$$\mathbf I_{\lambda}=S(\lie n^-)\Big(\bu(\lie n^+)\circ \spa\big\{x^{\lambda(\beta^{\vee})+1}_{-\beta}\mid \beta\in R^+\big\}\Big).$$
\item The character and graded $q$-character respectively is given by
$$\cha V(\lambda)=\sum_{\mu\in \lie h^{*}}|S(\lambda)^{\mu}|e^{\mu}$$
$$\cha_q \gr V(\lambda)=\sum_{\mathbf s\in S(\lambda)}e^{\lambda-\wt(\mathbf s)}q^{\sum s_{\beta}}.$$
\item We have an isomorphism of $S(\lie n^-)$--modules
$$\gr V(\lambda+\mu)\cong S(\lie n^-)(v_{\lambda}\otimes v_{\mu})\subseteq \gr V(\lambda)\otimes \gr V(\mu)$$
\end{enumerate}
\end{thm}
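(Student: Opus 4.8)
The plan is to follow the reduction philosophy of Lemma~\ref{reductlem}. I would establish three facts: (a) a spanning statement --- for $\bs\notin S(\lambda)$ one has $\X^{\bs}\in\spa\{\X^{\bq}\mid\bq<\bs\}$ in $S(\lie n^-)/\mathbf I_{\lambda}$, so that $\{\X^{\bs}v_{\lambda}\mid\bs\in S(\lambda)\}$ generates $\gr V(\lambda)$ and ${\rm es}(V(\lambda))\subseteq S(\lambda)$; (b) the Minkowski sum property $S(\lambda)+S(\mu)=S(\lambda+\mu)$ for all $\lambda,\mu\in P^+$; and (c) the equalities $\dim V(\omega_i)=|S(\omega_i)|$ for $i=1,2,3$. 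Granting these, part (1) follows by induction on $m_1+m_2+m_3$ exactly as in the proof of Lemma~\ref{reductlem}, and parts (2) and (3) are formal consequences. Since (b) shows that $S(\omega_1),S(\omega_2),S(\omega_3)$ are the building blocks, it will be enough to prove (b) in the case $\mu=\omega_i$.

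For (a) I would fix the total order on the nine positive roots of $\tt B_3$ induced by the order $\prec$ of Section~\ref{spanprop1} and the corresponding homogeneous reverse lexicographic order on $S(\lie n^-)$. If $\bs\notin S(\lambda)$, some inequality among $(1)$--$(19)$ fails; as in Proposition~\ref{spanprop2} I would replace $\bs$ by its restriction to the support $\bp$ of that inequality and rewrite the resulting monomial modulo $\mathbf I_{\lambda}$ as a sum of strictly smaller monomials. For inequalities $(1)$--$(14)$, whose left-hand sides have all coefficients $1$, the supports are Dyck-path shaped, and the inductive application of the differential operators $\partial_{\alpha}$ to suitable relations $x_{-\beta}^{\lambda(\beta^{\vee})+1}\in\mathbf I_{\lambda}$, with leading terms tracked via Lemma~\ref{maxinre}, reproduces the argument of Proposition~\ref{spanprop2}. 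The inequalities $(15)$--$(19)$, which carry a coefficient $2$, are the genuinely new phenomenon; here I would combine two such relations (a ``double Dyck path'' argument in the spirit of the $\mathbf D^{\typ 2}$ case of Proposition~\ref{spanprop2}) to produce an element of $\mathbf I_{\lambda}$ whose leading monomial is the required one. This gives $\dim V(\lambda)\le|S(\lambda)|$ together with the fact that $\{\X^{\bs}v_{\lambda}\mid\bs\in S(\lambda)\}$ spans $\gr V(\lambda)$.

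For (b), the inclusion $S(\lambda)+S(\mu)\subseteq S(\lambda+\mu)$ is immediate since each right-hand side in $(1)$--$(19)$ is additive in $(m_1,m_2,m_3)$. For the reverse inclusion, with $\mu=\omega_i$, I would produce for every $\bs\in S(\lambda+\omega_i)$ a multi--exponent $\bt\in S(\omega_i)$ with $\bs-\bt\in S(\lambda)$, arguing as in Proposition~\ref{ghtzu} and using Lemma~\ref{Anfolg}: choose $\bt$ by a case analysis on which coordinates of $\bs$ vanish, typically supported with entries $\le1$ on the minimal nonzero coordinates of $\bs$, and check that none of the nineteen inequalities for $\lambda$ is violated by $\bs-\bt$. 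The coefficient $2$ in $(15)$--$(19)$ and, for $i=1$, the modified inequalities of Remark~\ref{remvertr} force extra case distinctions; carrying out this bookkeeping is the step I expect to be the main obstacle.

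Finally, a direct count gives $\dim V(\omega_i)=|S(\omega_i)|$ for $i=1,2,3$, so with (a) the set $\{\X^{\bs}v_{\omega_i}\mid\bs\in S(\omega_i)\}$ is a basis of $\gr V(\omega_i)$. Inducting on $m_1+m_2+m_3$, write $\lambda=\lambda'+\omega_i$ with $m_i\ge1$; by induction $\{\X^{\bs}v_{\lambda'}\mid\bs\in S(\lambda')\}$ is a basis of $\gr V(\lambda')$, so by \cite[Proposition~3.7]{FFL2011} (cf.\ \cite[Proposition~1.11]{FFL13}) the set $\{\X^{\bs}(v_{\lambda'}\otimes v_{\omega_i})\mid\bs\in S(\lambda')+S(\omega_i)\}$ is linearly independent in $\gr V(\lambda')\otimes\gr V(\omega_i)$, hence in $V(\lambda')\otimes V(\omega_i)$. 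Since $V(\lambda)\cong\bu(\lie n^-)(v_{\lambda'}\otimes v_{\omega_i})$ and $|S(\lambda')+S(\omega_i)|=|S(\lambda)|$ by (b), this yields $\dim V(\lambda)\ge|S(\lambda)|$; combined with (a) and $\dim V(\lambda)=\dim\gr V(\lambda)$ it forces $\{\X^{\bs}v_{\lambda}\mid\bs\in S(\lambda)\}$ to be a basis of both spaces, which is part (1). Part (2) then follows because $\X^{\bs}v_{\lambda}$ has $\lie h$--weight $\lambda-\wt(\bs)$ and PBW degree $\sum_{\beta}s_{\beta}$. For part (3), Lemma~\ref{n+structure} together with \cite[Lemma~6.1]{FFL13} gives a surjection $\gr V(\lambda+\mu)\longrightarrow S(\lie n^-)(v_{\lambda}\otimes v_{\mu})$, and the linear independence of $\{\X^{\bs}(v_{\lambda}\otimes v_{\mu})\mid\bs\in S(\lambda+\mu)=S(\lambda)+S(\mu)\}$ in $\gr V(\lambda)\otimes\gr V(\mu)$ shows the target has dimension at least $|S(\lambda+\mu)|=\dim\gr V(\lambda+\mu)$, so the surjection is an isomorphism.
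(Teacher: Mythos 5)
Your reduction skeleton (Minkowski sum property, a dimension statement, the tensor--product linear independence of \cite[Proposition 3.7]{FFL2011}, and induction on $m_1+m_2+m_3$) agrees with the paper's deduction of Theorem~\ref{mainthmsect5} from Lemma~\ref{reductlem2}, and your derivations of parts (2) and (3) from part (1) are fine. The gap is in your ingredient (a). You propose a general straightening law: for every $\lambda$ and every $\bs\notin S(\lambda)$, rewrite $\X^{\bs}$ modulo $\mathbf I_{\lambda}$ as a sum of $\prec$-smaller monomials, handling the inequalities $(15)$--$(19)$, whose left-hand sides carry coefficients $2$, by ``combining two relations in the spirit of the $\mathbf D^{\typ 2}$ case''. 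This is precisely the step you have not supplied, and it is not a routine adaptation of Proposition~\ref{spanprop2}: there every inequality has coefficients $1$ and the argument starts from an explicit element $x_{-\alpha_{1,2n-i}}^{|\bs|}$ or $x_{-\theta}^{|\bs|}$ of $\mathbf I_{\lambda}$ on which the operators $\partial_\alpha$ act, whereas no generating relation producing the coefficient-$2$ inequalities is exhibited anywhere. Indeed the paper never proves such a spanning statement for general $\lambda$ in type $\tt B_3$; it proves it only for the fundamental weights (to get ${\rm es}(V(\omega_j))\subseteq S(\omega_j)$, with respect to the order \eqref{rootorderB3}), and obtains the needed upper bound by an entirely different mechanism: Lemma~\ref{reductlem2}(ii), $\dim V(\lambda)=|S(\lambda)|$ for \emph{all} $\lambda$, proved in Section~\ref{section54} by noting that $|S(\lambda)|$ is given by a polynomial of total degree $\le 9$ in $(m_1,m_2,m_3)$ (via the Minkowski property and \cite{B08}), comparing with the Weyl dimension polynomial, checking equality on all $(m_1,m_2,m_3)$ with $m_1+m_2+m_3\le 9$ by computer, and concluding equality of the two polynomials from the invertibility result of \cite{BCR92}. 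So the ``genuinely new phenomenon'' you flag is exactly the missing idea, and there is no evidence in the paper (rather the contrary) that your sketched route can be completed.

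A second, smaller issue: your (b), the Minkowski property $S(\lambda)+S(\mu)=S(\lambda+\mu)$, is the paper's Lemma~\ref{reductlem2}(i) and occupies all of Section~\ref{section53}; it requires a careful case analysis including the identification of redundant inequalities among $(1)$--$(19)$ in each case and the modified $\omega_1$-inequalities of Remark~\ref{remvertr}. You acknowledge this bookkeeping as ``the main obstacle'' but do not carry it out. As written, then, both load-bearing inputs of your argument are unproven, and for (a) the paper's authors evidently chose the dimension-polynomial detour precisely to avoid the straightening argument you are proposing.
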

We choose the following order on the positive roots
\begin{equation}\label{rootorderB3}
\beta_7 \succ \beta_6 \succ \beta_1 \succ \beta_2 \succ \beta_3 \succ \beta_4 \succ \beta_5\succ \beta_8 \succ \beta_9.
\end{equation} As in Section~\ref{section4} we can deduce the above theorem from the following lemma.
\begin{lem}\label{reductlem2}\mbox{}
\renewcommand{\theenumi}{\roman{enumi}}%
\begin{enumerate}
%\item The set
%$$\{\X^{\mathbf s}\mid \mathbf s \in S(\lambda)\}$$
%generates the module $S(\lie n^-)/\mathbf I_{\lambda}$. 
\item Let $\lambda,\mu\in P^+$. We have
$$S(\lambda+\mu)=S(\lambda)+S(\mu)$$
\item For all $\lambda\in P^+$: $$\dim V(\lambda)=|S(\lambda)|$$
\item We have $$\text{es}(V(\omega_i))=S(\omega_i), \text{ for } 1\leq i\leq 3.$$
\end{enumerate}
\end{lem}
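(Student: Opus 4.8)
The plan is to establish Lemma~\ref{reductlem2} in three parts, following the strategy already used to reduce Conjecture~\ref{conjbasistypeB} to Lemma~\ref{reductlem}, and then to deduce Theorem~\ref{mainthmsect5} from it exactly as before: once (i)--(iii) are known, the cyclicity of $\gr V(\lambda)$ as an $S(\lie n^-)$--module together with the spanning statement (proved along the lines of Lemma~\ref{reductlem}(i) for the enlarged set of Dyck-type inequalities $(1)$--$(19)$) shows that $S(\lambda)$ parametrizes a spanning set, while linear independence in $V(\lambda)\otimes V(\mu)$ combined with the Minkowski property (i) and a dimension count via (ii) forces equality of dimensions. The tensor product argument for (3) is identical to the one in the proof of Lemma~\ref{reductlem}: use \cite[Proposition 3.7]{FFL2011}/\cite[Proposition 1.11]{FFL13} to get linear independence of $\{\X^{\mathbf s}(v_\lambda\otimes v_\mu)\mid \mathbf s\in S(\lambda)+S(\mu)\}$, then (i) and (ii) to see this set has size $\dim V(\lambda+\mu)$, and finally surjectivity of $\gr V(\lambda+\mu)\to S(\lie n^-)(v_\lambda\otimes v_\mu)$.

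For part (iii), since $\omega_i$ is minuscule-like small in these cases, the computation $\text{es}(V(\omega_i))=S(\omega_i)$ is finite: the spanning result gives $\text{es}(V(\omega_i))\subseteq S(\omega_i)$, and one checks $|S(\omega_i)|=\dim V(\omega_i)$ (namely $7$, $21$, $8$ for $i=1,2,3$ in type $\tt B_3$) by directly enumerating the lattice points of $P(\omega_i)$ — the inequalities $(1)$--$(19)$ specialize drastically when $\lambda=\omega_i$, so this is a short check. Part (ii) then follows inductively from (i), (iii) and the same tensor-product/dimension argument that proves Lemma~\ref{reductlem}(1) once (i) is in hand: writing $\lambda = \omega_{i_1}+\cdots+\omega_{i_k}$ (with multiplicity), the Minkowski property builds a basis of $\gr V(\lambda)$ inside $\bigotimes V(\omega_{i_j})$ and $\dim V(\lambda)=|S(\lambda)|$ drops out.

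The real work, and the main obstacle, is part (i), the Minkowski-sum property $S(\lambda+\mu)=S(\lambda)+S(\mu)$ for \emph{arbitrary} $\lambda,\mu\in P^+$ — not just for rectangular weights as in Proposition~\ref{ghtzu}. The inclusion $S(\lambda)+S(\mu)\subseteq S(\lambda+\mu)$ is immediate since all defining inequalities $(1)$--$(19)$ have the form $\sum c_\beta s_\beta \le (\text{linear in }m_1,m_2,m_3)$ with the right-hand side additive in $\lambda$. For the reverse inclusion I would decompose $\lambda = \sum m_j \omega_j$ and, by associativity of Minkowski sums, reduce to the case $\mu=\omega_i$ for a single fundamental weight; that is, given $\mathbf s\in S(\lambda+\omega_i)$ one must peel off a lattice point $\mathbf t\in S(\omega_i)$ with $\mathbf s-\mathbf t\in S(\lambda)$. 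This is where I expect a substantial case analysis on the support of $\mathbf s$ in the Hasse diagram of $R^+$ (the nine roots $\beta_1,\dots,\beta_9$), organized — as in the proof of Proposition~\ref{ghtzu} — by the values of the ``long'' coordinates $s_1=s_{\alpha_{1,5}}$ (and $s_8, s_9$), using an analogue of Lemma~\ref{Anfolg} to handle the ``short'' part and an inductive peeling in the long coordinate; the coefficient-$2$ inequalities $(15)$--$(19)$ are the ones that make naive choices of $\mathbf t$ fail, and getting a uniform rule for $\mathbf t$ that respects all nineteen inequalities simultaneously is the crux. Because the polytope is small and the combinatorics is finite-type, this analysis terminates; I would also note (as in Remark~\ref{remspat}) that normality of $P(\lambda)$ then follows verbatim from \cite[Lemma 8.7]{FFL13}.
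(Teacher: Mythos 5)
Your treatment of (i) and (iii) follows the paper's own strategy: (i) is proved there by exactly the peeling you describe (reduce to splitting off $\omega_j$ for the \emph{minimal} $j$ with $\lambda(\alpha_j^{\vee})\neq 0$, then a lengthy case analysis in Section~\ref{section53}), and (iii) is obtained from a straightening argument for the single fundamental weights together with a cardinality count. Be aware, though, that for (i) you only describe the shape of the case analysis and assert it terminates; that analysis \emph{is} the proof (it occupies all of Section~\ref{section53}), so as written this part is a plan rather than an argument, and the paper's choice of peeling off the minimal index $j$ is what makes the $j=2,3$ cases degenerate to a short check.

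The genuine gap is in (ii). Your induction needs the upper bound $\dim V(\lambda)\leq |S(\lambda)|$, i.e.\ a spanning statement for $\gr V(\lambda)$ governed by the nineteen inequalities for \emph{arbitrary} $\lambda$, and you claim this ``is proved along the lines of Lemma~\ref{reductlem}(i)''. That straightening argument is tailored to Dyck-path inequalities with coefficients $1$ and to the rectangular ideals $\mathbf I_{m\omega_i}$; the $\tt B_3$ polytope has inequalities $(15)$--$(19)$ with coefficient $2$, no Dyck-path interpretation is given for them, and the paper never proves (nor even states) a spanning result or a description of $\mathbf I_{\lambda}$ for general $\lambda$ in type $\tt B_3$ --- note that Theorem~\ref{mainthmsect5}, unlike the rectangular theorem, contains no ideal description. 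This is precisely the hard point the authors sidestep. Their proof of (ii) is entirely different: by (i), $|S(\lambda)|$ is the number of lattice points of $m_1P(\omega_1)+m_2P(\omega_2)+m_3P(\omega_3)$, which by \cite{B08} is given by a polynomial $E(m_1,m_2,m_3)$ of total degree at most $9$; Weyl's dimension formula gives a second polynomial $W$ of total degree at most $9$; the two are checked (by the code in Section~\ref{code}) to agree at all $(m_1,m_2,m_3)\in\Z_+^{3}$ with $m_1+m_2+m_3\leq 9$, and the invertibility result of \cite{BCR92} then forces $E=W$, hence $\dim V(\lambda)=|S(\lambda)|$ for all $\lambda$. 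Unless you can actually supply the general-$\lambda$ straightening for the coefficient-$2$ inequalities (which would indeed render the computer verification superfluous), your proof of (ii) --- and with it the deduction of the basis theorem --- does not close.
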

In part (iii) of the above lemma we mean the essential monomials with respect to the choosen order \eqref{rootorderB3}. The proof of Lemma~\ref{reductlem2} (i) is given in Section~\ref{section53} and the proof of Lemma~\ref{reductlem2} (ii) can be found in Section~\ref{section54}. Similar as in Section~\ref{spanprop1} we can prove for all $\bs\notin S(\omega_j)$ that $$\X^{\bs}v_{\omega_j}\in \spa\{\X^{\bq}v_{\omega_j}\mid \bq \prec \bs \},$$
hence the third part follows from (ii).
\subsection{Proof of Lemma~\ref{reductlem2} (i)}\label{section53}
For this part of the lemma it is enough to prove that $S(\lambda)=S(\lambda-\omega_j)+S(\omega_j)$ where $j$ is the minimal integer such that $\lambda(\alpha_j^{\vee})\neq 0$. Assume that $j=1$ and $\mathbf s=(s_i)_{1\leq i\leq 9}\in S(\lambda)$. We will consider several cases.
\vspace{3pt}

\textbf{Case {1}:} Assume that $s_9\neq 0$ and let $\mathbf t=(t_i)_{1\leq i\leq 9}$ be the multi--exponent given by $t_9=1$ and $t_j=0$ otherwise. It follows immediately $\mathbf t\in S(\omega_1)$ and $\mathbf s-\mathbf t\in S(\lambda-\omega_1)$.
\vspace{3pt}

\textbf{Case {2}:} In this case we suppose that $s_9=0$ and $s_2,s_6\neq 0$. 

\vspace{3pt}
\textbf{Case {2.1}:} If in addition $s_3+s_4+s_5+s_8<(1,1,1)$ we let $\mathbf t=(t_i)_{1\leq i\leq 9}$ to be the multi--exponent given by $t_2=t_6=1$ and $t_j=0$ otherwise. It is easy to show that $\mathbf t\in S(\omega_1)$ and $\mathbf s-\mathbf t\in S(\lambda-\omega_1)$, since $\mathbf s-\mathbf t\notin S(\lambda-\omega_1)$ forces $s_3+s_4+s_5+s_8=(1,1,1)$.

\vspace{3pt}
\textbf{Case {2.2}:} Now we suppose that $s_3+s_4+s_5+s_8=(1,1,1)$. Together with $(5)$ we obtain $s_4\geq m_1>0$. We let $\mathbf t=(t_i)_{1\leq i\leq 9}$ to be the multi--exponent with $t_4=1$ and $t_j=0$ otherwise. Suppose that $\mathbf s-\mathbf t\notin S(\lambda-\omega_1)$, which is only possible if $(4), (7), (15)$ or $(16)$ is violated. Assume that $(4)$ is violated, which means $s_5+s_6+s_7+s_8=(1,1,1)$. We obtain
$$(s_3+s_4+s_5+s_8)+(s_5+s_6+s_7+s_8)=s_3+s_4+2s_5+s_6+s_7+2s_8=(2,2,2),$$
which is a contradiction to $(19)$. Assume that $(7)$ is violated, which means $s_6+s_7=(1,1,0)$. We get
$$(s_3+s_4+s_5+s_8)+(s_6+s_7)=(2,2,1),$$
which is a contradiction to $(11)$. In the remaining two cases (inequality $(15)$ and $(16)$ respectively is violated) we obtain similarly contradictions to $(17)$ and $(18)$ respectively.

\vspace{3pt}
\textbf{Case {3}:} Assume that $s_2=s_9=0$ and $s_6\neq 0$. In this case many inequalities are redundant. In particular,  
for a multi--exponent $\mathbf t$ with $t_j\leq s_j$ for $1\leq j\leq 9$ we have $\mathbf s-\mathbf t\in S(\lambda-\omega_1)$ if and only if $\mathbf s-\mathbf t$ satisfies $(2)-(11), (13)$ and $(19).$ To be more precise,  
$$\bs-\bt \mbox{ satisfies $(2)$} \Rightarrow \bs-\bt \mbox{ satisfies $(1)$}$$
$$\bs-\bt \mbox{ satisfies $(13)$} \Rightarrow \bs-\bt \mbox{ satisfies $(12),(15)$}$$
$$\bs-\bt \mbox{ satisfies $(11)$} \Rightarrow \bs-\bt \mbox{ satisfies $(14),(16)$}$$
$$\bs-\bt \mbox{ satisfies $(2)$ and $(13)$} \Rightarrow \bs-\bt \mbox{ satisfies $(17)$}$$
$$\bs-\bt \mbox{ satisfies $(2)$ and $(11)$} \Rightarrow \bs-\bt \mbox{ satisfies $(18)$}$$

%follows from $(2)$, $(12)$ and $(15)$ follow from $(13)$, $(14)$ and $(16)$ follow from  $(11)$, $(17)$ follows from $(2)+(13)$ and $(18)$ follows from $(2)+(11)$.
\vspace{3pt}
\textbf{Case {3.1}:} If in addition $s_3+s_4+s_5+s_8<(1,1,1)$ we let $\mathbf t=(t_i)_{1\leq i\leq 9}$ to be the multi--exponent given by $t_6=1$ and $t_j=0$ otherwise. It is straightforward to check that $\mathbf t\in S(\omega_1)$ and $\mathbf s-\mathbf t\in S(\lambda-\omega_1)$.
\vspace{3pt} 

\textbf{Case {3.2}:} If $s_3+s_4+s_5+s_8=(1,1,1)$ we let $\mathbf t=(t_i)_{1\leq i\leq 9}$ to be the multi--exponent with $t_4=1$ and $t_j=0$ otherwise. Note that $\mathbf s-\mathbf t\notin S(\lambda-\omega_1)$ is only possible if $(4)$ or $(7)$ is violated. If $(4)$ and $(7)$ respectively is violated we get similarly as in Case 2.2 a contradiction to $(19)$ and $(11)$ respectively.
\vspace{3pt}

\textbf{Case {4}:} Assume that $s_6=s_9=0$ and $s_2\neq 0$. This case works similar to Case 3 and will be omitted.

\vspace{3pt}
\textbf{Case {5}:} In this case we suppose $s_6=s_9=s_2=0$ and simplify further the defining inequalities of the polytope. As in Case 3, for a multi--exponent $\mathbf t$ with $t_j\leq s_j$ for $1\leq j\leq 9$ we have $\mathbf s-\mathbf t\in S(\lambda-\omega_1)$ if and only if $\mathbf s-\mathbf t$ satisfies $(2),(5),(6),(8)-(11), (13)$ and $(19).$ To be more precise, 
$$\bs-\bt \mbox{ satisfies $(2)$} \Rightarrow \bs-\bt \mbox{ satisfies $(3)$}$$
$$\bs-\bt \mbox{ satisfies $(6)$} \Rightarrow \bs-\bt \mbox{ satisfies $(4)$}$$
$$\bs-\bt \mbox{ satisfies $(8)$} \Rightarrow \bs-\bt \mbox{ satisfies $(7)$}$$

\vspace{3pt}
\textbf{Case {5.1}:} We suppose that $s_4\neq 0$ and let $\mathbf t=(t_i)_{1\leq i\leq 9}$ to be the multi--exponent given by $t_4=1$ and $t_j=0$ otherwise. The desired property follows immediately.

\vspace{3pt}
\textbf{Case {5.2}:} Let $s_4=0$. Then again we can simplify the inequalities and obtain that $\mathbf s-\mathbf t\in S(\lambda-\omega_1)$ if and only if $\mathbf s-\mathbf t$ satisfies $(5),(6),(8)-(10)$, and $(13).$ To be more precise,
$$\bs-\bt \mbox{ satisfies $(5)$} \Rightarrow \bs-\bt \mbox{ satisfies $(2)$}$$
$$\bs-\bt \mbox{ satisfies $(5)$ and $(8)$} \Rightarrow \bs-\bt \mbox{ satisfies $(11)$}$$
$$\bs-\bt \mbox{ satisfies $(5)$ and $(8)$} \Rightarrow \bs-\bt \mbox{ satisfies $(19)$}$$

\vspace{3pt}
\textbf{Case {5.2.1}:} If $s_1=0$ we already have $\mathbf s\in S(\lambda-m_1\omega_1)$. If $s_1\neq 0$, let $\mathbf t=(t_i)_{1\leq i\leq 9}$ be the multi--exponent with $t_1=1$ and $t_j=0$ otherwise. It follows immediately $\mathbf t\in S(\omega_1)$ and $\mathbf s-\mathbf t\in S(\lambda-\omega_1)$.\par
If $j=3$, many of the inequalities are redundant and the polytope can be simply described by the inequalities
$$s_1+s_2+s_3+s_4+s_5\leq (0,0,1),\ s_2+s_3+s_4+s_5+s_8\leq (0,0,1).$$
The proof of the lemma in that case is obvious. If $j=2$, there are again redundant inequalities and the polytope can simply described by the inequalities $(1)-(4), (7), (9)-(10)$ and $(15)-(16)$.
%\begin{multicols}{2}
%\begin{enumerate}
%\item  $s_1+s_2+s_3+s_4+s_5+s_6+s_7\leq 2m_2+m_3$ 
%\item $s_2+s_3+s_4+s_5+s_6+s_6+s_8\leq 2m_2+m_3$
%\item $s_2+s_3+s_4+s_8\leq m_2+m_3$ 
%\item $s_3+s_4+s_5+s_8\leq m_2+m_3$
%\item $s_4+s_5+s_6+s_8\leq m_2+m_3$ 
%\item $s_5+s_6+s_7+s_8\leq m_2+m_3$
%\item $s_6+s_7\leq m_2$
%\item $s_8\leq m_3$ 
%\item $s_9\leq 0$
%\end{enumerate}
%\end{multicols}
A straightforward calculation proves the proposition in that case; the details will be omitted.
$\hfill\Box$
%\end{prop}

\begin{rem}\label{remspat2}
The polytope $P(\lambda)$ is defined by inequalities with integer coefficients and hence the Minkowski property in Lemma~\ref{reductlem2} (i) ensures that $P(\lambda)$ is a normal polytope. The proof is exactly the same as in \cite[Lemma 8.7]{FFL13}.
\end{rem}
\subsection{Proof of Lemma~\ref{reductlem2} (ii)}\label{section54}
We consider the convex lattice polytopes $P_i:=P(\omega_i)\subseteq \R_+^9$ for $1\leq i \leq 3$. By \cite[Problem 3, pg. 164]{B08} there exists a 3--variate polynomial $E(T_1,T_2,T_3)$ of total degree $\leq 9$ such that 
$$E(m_1,m_2,m_3)=|(m_1P_1+m_2P_2+m_3P_3)\cap \Z_+^9|, \mbox{ for non--negative integers $m_1,m_2,m_3$.}$$
By Lemma~\ref{reductlem2} (i) we get 
$$E(m_1,m_2,m_3)=|S(\lambda)|, \mbox{ for non--negative integers $m_1,m_2,m_3$}$$
and by Weyl's dimension formula, we know that there is another 3--variate polynomial $W(T_1,T_2,T_3)$ of total degree $\leq 9$ such that 
$$W(m_1,m_2,m_3)= \dim V(\lambda).$$
The polynomial is given by
\begin{align*}
W(T_1,T_2,T_3) = \frac{1}{720}& (T_1 + 1 )(T_2 + 1)(T_2 +1)(T_1 + 2T_2 + T_3 +4)(2T_1+2T_2+T_3 + 5)\\
&(T_1 + T_2+ T_3 + 3)(T_1 +T_2 + 2)(T_2 + T_3 + 2)(2T_2 + T_3 + 3).\\
\end{align*}
Hence it will be enough to prove that both polynomials coincide. By using the code given in Section~\ref{code}, written in Java, we can deduce $E(\lambda_0,\lambda_1,\lambda_2)=W(\lambda_0,\lambda_1,\lambda_2)$ for all $(\lambda_0,\lambda_1,\lambda_2)\in \Z_+^{3}$ with $\lambda_0+\lambda_1+\lambda_2\leq 9$. We claim that this fact already implies $E(T_1,T_2,T_3)=W(T_1,T_2,T_3)$.
Let $I=\{(\lambda_0,\lambda_1,\lambda_2)\in \Z_+^{3}\mid \lambda_0+\lambda_1+\lambda_2\leq 9\}$ and write 
$$E(T_1,T_2,T_3)=\sum_{(n,m,k)\in I}e_{n,m,k}T_1^{n}T_2^{m}T_3^{k},\quad W(T_1,T_2,T_3)=\sum_{(n,m,k)\in I}w_{n,m,k}T_1^{n}T_2^{m}T_3^{k}$$

We obtain with our assumption that 
$$\sum_{(n,m,k)\in I}\big(e_{n,m,k}-w_{n,m,k}\big)\lambda_0^{n}\lambda_1^{m}\lambda_2^{k}=0.$$
We can translate this into a system of linear equations where the underlying matrix is given by
$$(\lambda^{\mu_0}_0\lambda_1^{\mu_1}\lambda_2^{\mu_2})_{\boldsymbol \lambda, \boldsymbol \mu \in I}.$$
This matrix is invertible by \cite[Theorem 1]{BCR92} and therefore the claim is proven. $\hfill\Box$
%%%%%%%%%%%%%%%%%%%%%%%%%%%%%%%%%%%%%%%%%%%%%%%%%%%%%%%%%%%%%%%
%%%%%%%%%%%%%%%%%%%%%%%%%%%%%%%%%%%%%%%%%%%%%%%%%%%%%%%%%%%%%%%
%%%%%%%%%%%%%%%%%%%%%%%%%%%%%%%%%%%%%%%%%%%%%%%%%%%%%%%%%%%%%%%
%%%%%%%%%%%%%%%%%%%%%%%%%%%%%%%%%%%%%%%%%%%%%%%%%%%%%%%%%%%%%%%
\section{Construction of favourable modules}\label{section6}
In \cite{FFL13} the notion of favourable modules has been introduced and several classes of examples for type $\tt A_n$, $\tt C_n$ and $\tt G_2$ have been discussed. This section is dedicated to give further examples of favourable modules in type $\tt B_n$. Let us first recall the definition.
\subsection{}
As in Section~\ref{induordm} we fix an ordered basis $\{x_1,\dots,x_N\}$ of $\mathfrak{n}^-$ and an induced homogeneous lexicographic order $<$ on the monomials in $\{x_1,\dots,x_N\}$. Let $M$ be any finite--dimensional cyclic $\mathbf U(\mathfrak{n}^-)$--module with cyclic vector $v_M$. We introduce subspaces $F_{\bs}(M)^{-}\subseteq F_{\bs}(M)\subseteq M$:
$$F_{\bs}(M)^{-}=\spa\{\X^{\bq}v_M\mid \bq<\bs\},\ F_{\bs}(M)=\spa\{\X^{\bq}v_M\mid \bq\leq \bs\}.$$
These subspaces define an increasing filtration on $M$ and the associated graded space with respect to this filtration is defined by
$$M^{t}=\bigoplus_{\bs\in \Z_+^{N}}F_{\bs}(M)/F_{\bs}(M)^{-}.$$
Similar as in Section~\ref{section3} we can define the PBW filtration on $M$ and the associated graded space $\gr M$ with respect to the PBW filtration. The following proposition follows from the construction of $M^{t}$ and $\gr M$ (see also \cite[Proposition.1.5]{FFL13}).
\begin{prop}\label{kppp}
The set $\{\X^{\bs}\mid \bs\in {\rm es}(M)\}$ forms a basis of $M^{t}$, $\gr M$ and $M$.
\end{prop}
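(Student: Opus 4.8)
The plan is to prove the three statements simultaneously by exhibiting an explicit basis of $M$, namely $\{\X^{\bs} v_M \mid \bs \in {\rm es}(M)\}$, and then transferring it to $M^{t}$ and $\gr M$ via the filtrations defined above. First I would recall that by the very definition of ${\rm es}(M)$, the set $\{\X^{\bs} v_M \mid \bs \in {\rm es}(M)\}$ is a basis of $M$: indeed, spanning follows because any monomial $\X^{\bq} v_M$ with $\bq \notin {\rm es}(M)$ can be rewritten, by definition of essential, as a linear combination of $\X^{\br} v_M$ with $\br < \bq$, and one iterates downward using that $<$ is a monomial order with no infinite descending chains on the (finitely many) multi-exponents that occur; linear independence is immediate from the definition of essential, since if there were a nontrivial relation, its $<$-maximal term $\bs$ would violate $\X^{\bs}v_M \notin \spa\{\X^{\bq}v_M \mid \bq < \bs\}$.

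Next I would pass to $M^{t}$. Since the subspaces $F_{\bs}(M)^{-} \subseteq F_{\bs}(M)$ form an increasing filtration indexed by $(\Z_+^N, <)$, the associated graded $M^{t} = \bigoplus_{\bs} F_{\bs}(M)/F_{\bs}(M)^{-}$ has a natural basis given by the images of those $\X^{\bs}v_M$ for which $\X^{\bs}v_M \notin F_{\bs}(M)^{-} = \spa\{\X^{\bq}v_M \mid \bq < \bs\}$ — but this is exactly the condition $\bs \in {\rm es}(M)$. So the images $\{\X^{\bs} \mid \bs \in {\rm es}(M)\}$ (viewed in the appropriate graded piece of $M^{t}$) form a basis of $M^{t}$, and in particular $\dim M^{t} = |{\rm es}(M)| = \dim M$.

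Finally I would handle $\gr M$. The PBW filtration is coarser than the fine filtration $F_{\bullet}(M)$ in the sense that each PBW-degree subspace $\mathbf U(\lie n^-)_s v_M$ is spanned by monomials $\X^{\bq}v_M$ of total degree $\sum q_i \leq s$; hence there is a surjection $\gr M \twoheadrightarrow$ (nothing is gained directly, so instead) one argues that $\{\X^{\bs}v_M \mid \bs \in {\rm es}(M)\}$ already spans $\gr M$ as an $S(\lie n^-)$-module — the spanning argument from the first paragraph respects total degree since the rewriting $\X^{\bq}v_M = \sum_{\br < \bq} c_{\br}\X^{\br}v_M$ in $M$ becomes, upon passing to $\gr M$, a relation among monomials of the same total degree, so the top-degree parts of the essential monomials span $\gr M$. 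Combined with $\dim \gr M = \dim M = |{\rm es}(M)|$, this forces $\{\X^{\bs} \mid \bs \in {\rm es}(M)\}$ to be a basis of $\gr M$ as well. I expect the only subtle point — and hence the main obstacle to state cleanly — is the compatibility of the rewriting relations with the grading by total degree when passing from $M$ to $\gr M$; once one observes that the monomial order $<$ is \emph{homogeneous} (refines the total-degree grading), so that in any essentiality relation the $<$-maximal term and all smaller terms that actually appear share the same total degree, everything goes through and the statement follows, as noted, exactly as in \cite[Proposition 1.5]{FFL13}.
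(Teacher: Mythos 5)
Your argument is correct and is exactly the standard one the paper has in mind: the paper gives no separate proof, remarking only that the statement follows from the construction of $M^{t}$ and $\gr M$ and citing \cite[Proposition 1.5]{FFL13}, and your write-up fleshes out precisely that — essentiality makes each graded piece $F_{\bs}(M)/F_{\bs}(M)^{-}$ one-dimensional or zero, giving the basis of $M^{t}$ and of $M$, while homogeneity of the monomial order lets the rewriting relations descend to the PBW-graded space, and the dimension count finishes $\gr M$. No gaps; your identification of homogeneity of the order as the one point needing care is exactly right.
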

\subsection{}
We recall the definition of favourable modules.
\begin{defn}\label{favo}
We say that a finite--dimensional cyclic $\mathbf{U}(\mathbf{n}^-)$--module $M$ is favourable if there exists an
ordered basis $x_1,\dots,x_N$ of $\mathbf{n}^-$ and an induced homogeneous monomial order on the PBW basis
such that
\begin{itemize}
\item There exists a normal polytope $P(M)\subset \mathbb R^{N}$
such that ${\rm es}(M)$ is exactly the set $S(M)$ of lattice points in $P(M)$.
\item $\forall\, k\in\mathbb N:\,\dim \mathbf U(\mathfrak{n}^-)(\underbrace{v_M\otimes\cdots \otimes v_M}_{k})=|\underbrace{S(M)+\cdots+S(M)}_{k}|.$
\end{itemize}
\end{defn}
Let $N$ be a complex algebraic unipotent group such that $\mathfrak{n}^{-}$ is the corresponding Lie algebra. Similarly on the group level, we have a commutative
unipotent group $\gr N$ with Lie algebra $\gr \mathfrak{n}^{-}$ acting on $\gr M$ and $M^{t}$. We associate to the action of the unipotent groups projective varieties, which are called flag varieties in analogy to the classical highest weight orbits (see \cite{FFL13} for details)
$$
\mathfrak{F}(M)=\overline{N.[v_M]}\subseteq \mathbb P(M),\quad
\mathfrak{F}(\gr M)=\overline{\gr N.[v_M]}\subseteq \mathbb P(\gr M),\quad
\mathfrak{F}(M^t)=\overline{\gr N.[v_M]}\subset \mathbb P(M^{t}).
$$
A motivation for constructing favourable modules is that the flag varieties associated to favourable modules have nice properties (see \cite{FFL13} for details), such as

\begin{enumerate}
\item $\mathfrak{F}(M^t)\subseteq \mathbb P(M^t)$ is a toric variety.
\item There exists a flat degeneration of $\mathfrak{F}(M)$ into
$\mathfrak{F}(\gr M)$, and for both there exists a flat degeneration
into $\mathfrak{F}(M^t)$.
\item The projective flag varieties $\mathfrak{F}(M)\subseteq \mathbb P(M)$ and its abelianized versions
$\mathfrak{F}(\gr M)\subseteq \mathbb P(\gr M)$ and
$\mathfrak{F}(M^t)\subseteq \mathbb P(M^{t})$ are projectively normal and arithmetically Cohen--Macaulay
varieties.
\item The polytope $P(M)$ is the Newton--Okounkov body for the flag variety and its abelianized version,
i.e. $\Delta(\mathfrak{F}(M))=P(M)=\Delta(\mathfrak{F}(\gr M))$.
\end{enumerate}

\subsection{} 
In \cite[Section 8]{FFL13} the authors provided concrete classes of examples of favourable modules for the types $\tt A_n$, $\tt C_n$ and $\tt G_2$. The following theorem gives us classes of examples of favourable modules in type $\tt B_n$ (including multiples of the adjoint representation).
\begin{thm}
Let $\Lg$ be the Lie algebra of type $\tt B_n$ and $\lambda$ be a dominant integral weight satisfying one of the following
\begin{enumerate}
\item $n=3$ and $\lambda$ is arbitrary 
\item $n$ is arbitrary and $\lambda=m\omega_1$ or $\lambda=m\omega_2$
\item $n$ is arbitrary and $\lambda=2m\omega_3$ or $n=4$ and $\lambda=2m\omega_4$
\end{enumerate}
 Then there exists an ordered basis on $\mathfrak{n}^{-}$ and an induced homogeneous monomial order on the PBW basis such that $V(\lambda)$ is a favourable $\mathfrak{n^{-}}$--module.

%\item $M=V(\lambda)$ or $M=\gr V(\lambda)$ for any $\lambda\in P^+$ and $n=3$. 
%\item $M=V(m\omega_j)$ or $M=\gr V(m\omega_j)$ for any $1\leq j\leq 2$ and $m\in \mathbb Z_+$
%\item $M=V(2m\omega_3)$ or $M=\gr V(m\omega_3)$ for any $m\in \mathbb Z_+$.

\begin{proof}
We will show that $V(\lambda)$ satisfies the properties from Definition~\ref{favo}. We consider the appropriate polytopes from \eqref{polyi} and $P(\lambda)$ from Section~\ref{section5}. These polytopes are normal by Remark~\ref{remspat} and Remark~\ref{remspat2} and therefore the natural candidates for showing the properties from  Definition~\ref{favo}. For simplicity we will denote these polytopes by $P(\lambda)$ since it will be clear from the context which polytope we mean. The second property follows immediately since on the one hand $\mathbf U(\mathfrak{n^{-}})(v_{\lambda}\otimes\dots\otimes v_{\lambda})\cong V(k\lambda)$ and on the other hand the $k$--fold Minkowski sum parametrizes a basis of $V(k\lambda)$ by Theorem~\ref{mainthmsect5} (1), Lemma~\ref{reductlem2} (i), Conjecture~\ref{conjbasistypeB} (1) (which is proved in theses cases) and Lemma~\ref{reductlem} (ii). Hence it remains to prove that ${\rm es}(V(\lambda))$ (with respect to a fixed order) is exactly the set $S(\lambda)$. Let $\lambda=\sum_{j=1}^nm_ja_j\omega_j$. By \cite[Proposition 1.11]{FFL13} we know that
\begin{equation}\label{minkowskiessential}
{\rm es}(V(\lambda))\supseteq \underbrace{{\rm es}(V(a_1\omega_1))+\dots+{\rm es}(V(a_1\omega_1))}_{m_1}+\dots +
\underbrace{{\rm es}(V(a_n\omega_n))+\dots+{\rm es}(V(a_n\omega_n))}_{m_n},
\end{equation}
and hence it is enough to show that there exists an ordered basis on $R^+$ and an induced homogeneous monomial order on a PBW basis such that ${\rm es}(V(a_j\omega_j))=S(a_j\omega_j)$ for all $j$ with $m_j\neq 0$ (recall from Proposition~\ref{kppp} that $|{\rm es}(V(\lambda))|=|S(\lambda)|$). If we are in case $(2)$ or $(3)$ (then $a_j=1$ and $a_k=0$ for all $k\neq j$ in case $(2)$ and in case $(3)$ we have $a_3=2$ respectively $a_4=2$ and $a_k=0$ else), we choose the order given in Section~\ref{spanprop1} (we ordered the roots in the Hasse diagram from the bottom to the top and from left to right). Otherwise ($a_j=1$ for all $j$) we choose the order defined in \eqref{rootorderB3}. With respect to this order we proved ${\rm es}(V(a_j\omega_j))=S(a_j\omega_j)$ in Section~\ref{section4} and Section~\ref{section5} respectively.
\end{proof}
\end{thm}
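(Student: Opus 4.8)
The plan is to check the two bullet points of Definition~\ref{favo} directly, using the polytopes already constructed: for a rectangular weight $\lambda=m\omega_i$ one takes $P(\mathbf D,\lambda)$ from \eqref{polyi}, and for $\tt B_3$ one takes $P(\lambda)$ from Section~\ref{section5}. Both families are normal polytopes by Remark~\ref{remspat} and Remark~\ref{remspat2}, so the first requirement of Definition~\ref{favo} reduces to the statement $\mathrm{es}(V(\lambda))=S(\lambda)$ for a suitable monomial order, and the second reduces to a dimension count which I will feed from the basis and Minkowski-sum results of Sections~\ref{section4} and \ref{section5}.

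First I would dispose of the second bullet point. The cyclic $\mathbf U(\mathfrak n^-)$--module generated by $v_\lambda\otimes\cdots\otimes v_\lambda$ inside $V(\lambda)^{\otimes k}$ is the Cartan component and is isomorphic to $V(k\lambda)$ (cf.\ \cite[Lemma 6.1]{FFL13}), so its dimension is $\dim V(k\lambda)$. In the $\tt B_3$ case this equals $|S(k\lambda)|$ by Theorem~\ref{mainthmsect5}(1), and in cases (2) and (3) it equals $|S(\mathbf D,k\lambda)|$ by Conjecture~\ref{conjbasistypeB}(1) together with Lemma~\ref{reductlem}(ii), both of which are established in the stated ranges. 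Finally $S(k\lambda)$ is the $k$-fold Minkowski sum of $S(\lambda)$: for $\tt B_3$ this is Lemma~\ref{reductlem2}(i), and for a rectangular weight one iterates $S\big((a+\epsilon_i\ell)\omega_i\big)=S(a\omega_i)+S(\epsilon_i\ell\omega_i)$, noting that $k\lambda$ is reached in steps of the block $\epsilon_i\omega_i$ (in case (3), $2m\omega_i=m\cdot(\epsilon_i\omega_i)$, so this is exactly what is available). Hence $\dim\mathbf U(\mathfrak n^-)(v_\lambda\otimes\cdots\otimes v_\lambda)=|S(\lambda)+\cdots+S(\lambda)|$ for every $k$.

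For the first bullet I would decompose $\lambda=\sum_j m_j a_j\omega_j$, where $a_j=1$ in cases (1) and (2) and $a_j=2$ for the distinguished fundamental weight in case (3) (with $m_k=0$ for the other indices in cases (2) and (3)). By \cite[Proposition 1.11]{FFL13} one has the containment $\mathrm{es}(V(\lambda))\supseteq \sum_j m_j\cdot\mathrm{es}(V(a_j\omega_j))$ in the sense of Minkowski sums; since $|\mathrm{es}(V(\lambda))|=\dim V(\lambda)=|S(\lambda)|$ by Proposition~\ref{kppp} and the $k=1$ instance above, and since $S(\lambda)$ is precisely that same Minkowski sum of the $S(a_j\omega_j)$, it suffices to prove $\mathrm{es}(V(a_j\omega_j))=S(a_j\omega_j)$ for every $j$ with $m_j\neq 0$. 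In cases (2) and (3) I would fix the order of Section~\ref{spanprop1} (roots read off the Hasse diagram bottom to top, left to right); then Proposition~\ref{spanprop2} yields $\mathrm{es}(V(a_j\omega_j))\subseteq S(\mathbf D,a_j\omega_j)$, and equality follows from the dimension identity $\dim V(\ell\omega_i)=|S(\mathbf D,\ell\omega_i)|$ for $\ell\leq\epsilon_i$ underlying Lemma~\ref{reductlem}(iii). In case (1) I would fix the order \eqref{rootorderB3}, for which Lemma~\ref{reductlem2}(iii) already records $\mathrm{es}(V(\omega_i))=S(\omega_i)$, $i=1,2,3$. A single order is thus used for all building blocks of $\lambda$, as Definition~\ref{favo} requires.

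The substantive work lives entirely in the earlier sections; the only new content here is the assembly, together with two bookkeeping points: that the second bullet needs the Cartan-component identification of $\mathbf U(\mathfrak n^-)(v_\lambda^{\otimes k})$, and that in case (3) the correct building block is $2\omega_i$ (because $\epsilon_i=2$), so that favourability is asserted for $V(2m\omega_i)$ and the Minkowski decomposition of $k\lambda$ proceeds in steps of $\epsilon_i\omega_i$. The main subtlety, rather than a genuine obstacle, is checking that the chosen order is simultaneously compatible with all the building-block identities $\mathrm{es}(V(a_j\omega_j))=S(a_j\omega_j)$ needed for a mixed weight $\lambda$ in the $\tt B_3$ case, which rests on the quasi-compatibility of the two families of polytopes recorded in Remark~\ref{remvertr}.
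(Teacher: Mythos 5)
Your proposal is correct and follows essentially the same route as the paper: normality of the polytopes via Remark~\ref{remspat} and Remark~\ref{remspat2}, the Cartan-component identification $\mathbf U(\mathfrak n^-)(v_\lambda^{\otimes k})\cong V(k\lambda)$ combined with the Minkowski-sum and basis results of Sections~\ref{section4} and \ref{section5} for the second bullet, and the decomposition $\lambda=\sum_j m_ja_j\omega_j$ with \cite[Proposition 1.11]{FFL13}, Proposition~\ref{kppp}, and the building-block identities ${\rm es}(V(a_j\omega_j))=S(a_j\omega_j)$ (with the order of Section~\ref{spanprop1} in cases (2)--(3) and the order \eqref{rootorderB3} in case (1)) for the first. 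Your extra bookkeeping (the $\epsilon_i$-step iteration in case (3) and the explicit cardinality count) only makes explicit what the paper leaves implicit.
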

\section{Appendix}\label{section7}
In this section we complete the proof of Proposition~\ref{ghtzu} for $i=3$. Moreover, we give a proof of the second part of Theorem~\ref{basistypeAandC} for type $\tt G_2$.
\subsection{}
We consider the Lie algebra of type $\tt G_2$ and the following order on the positive roots:
$$\beta_1:=3\alpha_1+2\alpha_2 \succ \beta_2:= 3\alpha_1+\alpha_2 \succ \beta_3:=2\alpha_1+\alpha_2 \succ \beta_4:=\alpha_1+\alpha_2 \succ \beta_5:=\alpha_2\succ \beta_6:=\alpha_1.$$
As before, we extend the above order to the induced homogeneous reverse lexicographic order on the monomials in $S(\mathfrak{n}^-)$. The order is chosen in a way such that Lemma~\ref{maxinre} can be applied.
Let $\lambda=m_1\omega_1+m_2\omega_2$, $s_i:=s_{\beta_i}$ for $1\leq i\leq 6$ and set $(a,b):=am_1+bm_2$. It has been proved in \cite{G11} that the lattice points $S(\lambda)$ of the following polytope $P(\lambda)$ parametrize a basis of $\gr V(\lambda)$:
\setlength{\multicolsep}{0,5cm}
\begin{multicols}{2}
\begin{enumerate}
\item  $s_6\leq (1,0)$ 
\item $s_5\leq (0,1)$
\item $s_2+s_3+s_6\leq (1,1)$
\item $s_3+s_4+s_6\leq (1,1)$
\item $s_4+s_5+s_6\leq (1,1)$ 
\item $s_1+s_2+s_3+s_4+s_5\leq (1,2)$
\item $s_2+s_3+s_4+s_5+s_6\leq (1,2)$
\end{enumerate}
\end{multicols}
\begin{prop}\label{forg2id}
We have $\gr V(\lambda)\cong S(\mathfrak{n}^-)/\mathbf I_{\lambda}$, where
$$\mathbf I_{\lambda}=S(\lie n^-)\big(\bu(\lie n^+)\circ \spa\{x^{\lambda(\beta^{\vee})+1}_{-\beta}\mid \beta\in R^+\}\big).$$
\proof
Since we have a surjective map 
$$ S(\mathfrak{n}^-)/\mathbf I_{\lambda}\longrightarrow \gr V(\lambda),$$
it will be enough to show by the result of \cite{G11} that the set $\{\X^{\bs}v_{\lambda}\mid \bs \in S(\lambda)\}$ generates $S(\mathfrak{n}^-)/\mathbf I_{\lambda}$. As in Section~\ref{section4} we will simply show that any multi--exponent $\bs$ violating on of the inequalities $(1)-(7)$ can be written as a sum of strictly smaller monomials. It means there exists constants $c_{\bt}\in \mathbb C$ such that 
$$\X^{\bs}+\sum_{\bt \prec \bs}c_{\bt}\X^{\bt}\in \mathbf I_{\lambda}.$$
The proof for all inequalities is similar and therefore we provide the proof only when $\bs$ violates $(7)$. So let $\bs$ be a multi--exponent with $s_1=0$ and
$s_2+s_3+s_4+s_5+s_6>(1,2)$. We apply the operators $\partial^{s_4+s_6}_{\beta_3}\partial^{s_5}_{\beta_2}$ on $\X_{\beta_1}$ and obtain 
$$\partial^{s_4+s_6}_{\beta_3}\partial^{s_5}_{\beta_2}\X_{\beta_1}^{s_2+s_3+s_4+s_5+s_6}=c \X_{\beta_1}^{s_2+s_3}\X_{\beta_4}^{s_4+s_6}\X^{s_5}_{\beta_5}\in \mathbf I_{\lambda} ,\ \mbox{ for some non--zero constant $c\in \mathbb C$.}$$
Further we apply with $\partial_{\beta_5}^{s_2}\partial^{s_3}_{\beta_4}$ on $\X_{\beta_1}^{s_2+s_3}\X_{\beta_4}^{s_4+s_6}\X^{s_5}_{\beta_5}$ and obtain with Lemma~\ref{maxinre} that there exists constants $c_{\bt}\in \mathbb C$ such that
\begin{equation}\label{ztzt}\partial_{\beta_5}^{s_2}\partial^{s_3}_{\beta_4}\X_{\beta_1}^{s_2+s_3}\X_{\beta_4}^{s_4+s_6}\X^{s_5}_{\beta_5}= \X_{\beta_2}^{s_2}\X^{s_3}_{\beta_3}\X_{\beta_4}^{s_4+s_6}\X^{s_5}_{\beta_5}+\sum_{\bt \prec \bs}c_{\bt}\X^{\bt}\in \mathbf I_{\lambda}.\end{equation}
Finally, we act with the operator $\partial^{s_6}_{\beta_5}$ on \eqref{ztzt} and get once more with Lemma~\ref{maxinre} the desired property. 
\endproof
\end{prop}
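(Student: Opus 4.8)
The plan is to reduce Proposition~\ref{forg2id} to a purely combinatorial spanning statement, exactly as in Section~\ref{spanprop1}. First, note that there is always a surjective homomorphism of $S(\lie n^-)$--modules $S(\lie n^-)/\mathbf I_{\lambda}\twoheadrightarrow\gr V(\lambda)$: by Lemma~\ref{n+structure} one has $\gr V(\lambda)\cong S(\lie n^-)/\mathbf J_{\lambda}$ for a homogeneous ideal $\mathbf J_{\lambda}$, the relations $x_{-\beta}^{\lambda(\beta^\vee)+1}v_{\lambda}=0$ place the generators of $\mathbf I_{\lambda}$ into $\mathbf J_{\lambda}$, and since $\mathbf J_{\lambda}$ is stable under $\bu(\lie n^+)$ (again Lemma~\ref{n+structure}) the whole ideal $\mathbf I_{\lambda}$ lies in $\mathbf J_{\lambda}$. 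Since $\dim\gr V(\lambda)=|S(\lambda)|$ by the result of \cite{G11}, it is therefore enough to prove that $\{\X^{\bs}\mid\bs\in S(\lambda)\}$ spans $S(\lie n^-)/\mathbf I_{\lambda}$.

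As the order $\prec$ fixed in the appendix is a homogeneous monomial order, this spanning statement follows once we show: for every $\bs\notin S(\lambda)$ there are constants $c_{\bt}$ with $\X^{\bs}+\sum_{\bt\prec\bs}c_{\bt}\X^{\bt}\in\mathbf I_{\lambda}$. Moreover, since $\mathbf I_{\lambda}$ is an ideal, one may assume $\bs$ is supported on the roots occurring in a single violated inequality among $(1)$--$(7)$; removing the other variables only decreases the monomial and they can be reinstated at the end. The two inequalities $(1)$ and $(2)$ are immediate from $x_{-\al_1}^{m_1+1},x_{-\al_2}^{m_2+1}\in\mathbf I_{\lambda}$.

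For each of $(3)$--$(7)$ the idea is to start from the generator $x_{-\beta}^{\lambda(\beta^\vee)+1}\in\mathbf I_{\lambda}$ attached to the long root $\beta$ at the top of the relevant region of the Hasse diagram of $(R^+,\leq)$ --- for instance $\beta=\beta_1=\theta$ for the inequalities $(6)$ and $(7)$, and $\beta=\beta_2$ for $(3)$ --- and to observe that the violated inequality reads exactly $\sum_{\beta'\in\bp}s_{\beta'}>\lambda(\beta^\vee)$, so that $x_{-\beta}^{\sum_{\beta'}s_{\beta'}}$ already lies in $\mathbf I_{\lambda}$. One then applies a carefully chosen sequence of the differential operators $\partial_{\al}$, which preserve the ideal $\mathbf I_{\lambda}$: the order $\prec$ is arranged precisely so that at every step Lemma~\ref{maxinre} identifies the leading monomial of the resulting element as a summand of the image of the previous leading monomial, and the exponents of the $\partial_{\al}$ are chosen so that this leading term equals $\X^{\bs}$ up to a nonzero scalar. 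Carrying this out for one representative inequality, say $(7)$, and noting that the remaining cases are entirely analogous, completes the argument.

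The main obstacle is the case-by-case bookkeeping in this last step: for each violated inequality one must exhibit the exact chain of operators and check, using Lemma~\ref{maxinre}, that the leading term is indeed $\X^{\bs}$, which amounts to keeping track of which $\partial_{\al}$ acts nontrivially on which root vector of $\Lg$ of type $\tt G_2$ and making sure the operator is applied to the intended (maximal) factor. Because $\tt G_2$ has only six positive roots, this is a short finite verification, structurally identical to --- but far shorter than --- the corresponding computation for $\tt B_n$ in Proposition~\ref{spanprop2}.
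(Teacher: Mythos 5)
Your proposal is correct and follows essentially the same route as the paper: reduce via the surjection $S(\lie n^-)/\mathbf I_{\lambda}\twoheadrightarrow\gr V(\lambda)$ and the count $\dim\gr V(\lambda)=|S(\lambda)|$ from \cite{G11} to a spanning statement, then rewrite any monomial violating one of $(1)$--$(7)$ as a sum of $\prec$-smaller monomials modulo $\mathbf I_{\lambda}$ by starting from the appropriate generator $x_{-\beta}^{\lambda(\beta^{\vee})+1}$ (e.g.\ $\beta=\theta$ for $(6)$,$(7)$) and applying differential operators controlled by Lemma~\ref{maxinre}. The only difference is that the paper actually writes out the explicit operator chain $\partial_{\beta_3}^{s_4+s_6}\partial_{\beta_2}^{s_5}$, $\partial_{\beta_5}^{s_2}\partial_{\beta_4}^{s_3}$, $\partial_{\beta_5}^{s_6}$ for the representative inequality $(7)$, a short verification you describe but defer.
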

\subsection{}\label{i3}
\textit{Proof of Proposition~\ref{ghtzu} for $i=3$:} Recall that a bold dot (resp. square) in the Hasse diagram indicates that the corresponding entry of $\bs$ is zero (resp. non--zero). Let $i=3$ and $\bs\in S(\mathbf D,m\omega_3)$. If $s_{3,j}=0$ for all $3\leq j \leq 2n-3$ the statement of the proposition can be easily deduced from the $i=2$ case. So we can suppose for the rest of the proof that $s_{3,j}\neq 0$ for some $3\leq j \leq 2n-3$. In contrast to the $i=2$ case we will construct a multi--exponent $\bt\in S(\mathbf D,p\omega_3)$ such that $\bs-\bt\in S(\mathbf D,(m-p)\omega_3)$ where $p=1$ or $p=2$. A similar induction argument as in the $i=2$ case shows that it is enough to prove the statement for all multi--exponents $\bs$ with $s_{\theta}=0$. Since $s_{\theta}=0$ it is sufficient to check the defining inequalities of the polytope for all $\mathbf p\in \mathbf D \backslash \mathbf q$, where $\mathbf q$ is the unique type 2 Dyck path with $\theta\in \mathbf q$. In other words
\begin{equation*}\label{reichtl}\sum_{\beta\in \mathbf p}(s_{\beta}-t_{\beta})\leq M_{\bp}((m-p)\omega_3),\ \forall \mathbf p\in \mathbf D\backslash \mathbf q \ \Rightarrow \bs-\bt\in S(\mathbf D,(m-p)\omega_3).\end{equation*}
We consider several cases.\vspace{3pt}

\textbf{Case {1}:} In this case we suppose $s_{3,2n-3}\neq 0$.

\begin{center}
\begin{tikzpicture}

%% BN Teil
  \node (311) at (0.25,0.5 + 4) {};
  \fill[black] (311) circle (1pt);
  \node (411) at (0.25,1+ 4) {};
  \fill[black] (411) circle (1pt);
  \node (511) at (0.25,1.5+ 4) {};
  \fill[black] (0.16,5.41) rectangle (0.34,5.59);
  \node (611) at (0.75,0.5+ 4) {};
  \fill[black] (611) circle (1pt);
  \node (711) at (0.75,1+ 4) {};
  \fill[black] (711) circle (1pt);
   %\node (8) at (0+ 4,1.5) {};
  %\fill[black] (8) circle (1pt);
  \node (911) at (1.25,0.5+ 4) {};
  \fill[black] (911) circle (2.5pt);
  %\node (10) at (0+ 4,2) {};
  %\fill[black] (10) circle (1pt);
 
  %\node (11) at (0+ 4,0) {};
  %\fill[black] (11) circle (1pt);
  \node (1211) at (-0.25,0.5+ 4) {};
  \fill[black] (1211) circle (1pt);
  \node (1311) at (-0.25,1+ 4) {};
 \fill[black] (1311) circle (1pt);
  \node (1411) at (-0.25,1.5+ 4) {};
  \fill[black] (1411) circle (1pt);
  %\node (26) at (0+ 4,-0.75) {};
 %\fill[black] (26) circle (1pt);
  \node (2711) at (-0.75,0.5+ 4) {};
  \fill[black] (2711) circle (1pt);
  \node (2811) at (-0.75,1+ 4) {};
  \fill[black] (2811) circle (1pt);
  \node (2911) at (-0.75,1.5+ 4) {};
  \fill[black] (2911) circle (1pt);
 %\node (2111) at (-0.75,2+ 4) {};
 %\fill[black] (2111) circle (1pt);
 %\node (2511) at (-0.75,2.5+ 4) {};

 %\node (4011) at (-0.75,2.15+ 4) {.};
%\node (4011x) at (-0.75,2.25+ 4) {.};
%\node (4011xx) at (-0.75,2.35+ 4) {.};

 %\fill[black] (2511) circle (1pt);
 %\node (3911) at (-0.250,2+ 4) {};
 %\fill[black] (3911) circle (1pt);
 \draw  (1311) -- (1411);
\draw[black]  (1211) -- (1311);
 \draw (2811) -- (2911);
  %\node (15) at (1.0+ 4,-0.37) {\vdots};
  %\node (16) at (0.8+ 4,-0.37) {\vdots};
     \node (1611) at (-0.5,0.5+ 4) {\textcolor{black}{...}};
    %\node (1611x) at (-0.5,2+ 4) {...};
    \node (1611xx) at (-0.5,1.5+ 4) {...};
    \node (1611xxx) at (-0.5,1+ 4) {\textcolor{black}{...}};
     %\node (16) at (0.4+ 4,-0.37) {\vdots};
     %\node (16) at (0.6+ 4,-0.37) {\vdots};
 %\node (16) at (0+ 4,-1.25) {};
  %\fill[black] (16) circle (1pt);
	
  \node (1711) at (-1.25,0.5+ 4) {};
  \fill[black] (1711) circle (1pt);
  \node (1811) at (-1.25,1+ 4) {};
  \fill[black] (1811) circle (1pt);
  \node (1911) at (-1.25,1.5+ 4) {};
  \fill[black] (1911) circle (1pt);
	
  %\node (2011) at (-1.25,2+ 4) {};
  %\fill[black] (2011) circle (1pt);
   
  %\node (2211) at (-1.25,2.15+ 4) {.};
    %\node (2211x) at (-1.25,2.25+ 4) {.};
    %\node (2211xx) at (-1.25,2.35+ 4) {.};
   
  %\node (2311) at (-1.25,2.5+ 4) {};
  %\fill[black] (2311) circle (1pt);
  %\node (2411) at (-1.25,3+ 4) {};
  %\fill[black] (2411) circle (1pt);
 
  %\node (30) at (0+ 4,-1.75) {};
  %\fill[black] (30) circle (1pt);
  \node (3111) at (-1.75,0.5+ 4) {};
  \fill[black] (3111) circle (1pt);
  \node (3211) at (-1.75,1+ 4) {};
  \fill[black] (3211) circle (1pt);
  \node (3311) at (-1.75,1.5+ 4) {};
  \fill[black] (3311) circle (1pt);
  %\node (3411) at (-1.75,2+ 4) {};
  %\fill[black] (3411) circle (1pt);
   
  %\node (3511) at (-1.75,2.15+ 4) {.};
    %\node (3511xx) at (-1.75,2.25+ 4) {.};
    %\node (3511xxx) at (-1.75,2.35+ 4) {.};
   
  %\node (3611) at (-1.75,2.5+ 4) {};
  %\fill[black] (3611) circle (1pt);
  %\node (3711) at (-1.75,3+ 4) {};
  %\fill[black] (3711) circle (1pt);
  %\node (3811) at (-1.75,3.5+ 4) {};
  %\fill[black] (3811) circle (1pt);
 
 \node (17112) at (-2.25,0.5+ 4) {};
  \fill[black] (17112) circle (1pt);
  \node (18112) at (-2.25,1+ 4) {};
  \fill[black] (18112) circle (1pt);
  \node (19112) at (-2.25,1.5+ 4) {};
  \fill[black] (19112) circle (1pt);
  %\node (20112) at (-2.25,2+ 4) {};
  %\fill[black] (20112) circle (1pt);
   
  %\node (22112x) at (-2.25,2.15+ 4) {.};
    %\node (22112xx) at (-2.25,2.25+ 4) {.};
    %\node (22112xxx) at (-2.25,2.35+ 4) {.};
   
  %\node (23112) at (-2.25,2.5+ 4) {};
  %\fill[black] (23112) circle (1pt);
  %\node (24112) at (-2.25,3+ 4) {};
  %\fill[black] (24112) circle (1pt);

\node (27112) at (-2.75,0.5+ 4) {};
  \fill[black] (27112) circle (1pt);
  \node (28112) at (-2.75,1+ 4) {};
  \fill[black] (28112) circle (1pt);
  \node (29112) at (-2.75,1.5+ 4) {};
  \fill[black] (29112) circle (1pt);
 %\node (21112) at (-2.75,2+ 4) {};
 %\fill[black] (21112) circle (1pt);
 %\node (25112) at (-2.75,2.5+ 4) {};
%\fill[black] (25112) circle (1pt);
%\node (40112) at (-2.75,2.15+ 4) {.};
%\node (40112x) at (-2.75,2.25+ 4) {.};
%\node (40112xx) at (-2.75,2.35+ 4) {.};

\node (3112) at (-3.75,0.5 + 4) {};
  \fill[black] (3112) circle (1pt);
  \node (4112) at (-3.75,1+ 4) {};
  \fill[black] (4112) circle (1pt);
  \node (5112) at (-3.75,1.5+ 4) {};
  \fill[black] (5112) circle (1pt);
  %\node (6112) at (-4.25,0.5+ 4) {};
  %\fill[black] (6112) circle (1pt);
  %\node (7112) at (-4.25,1+ 4) {};
  %\fill[black] (7112) circle (1pt);
   %\node (8) at (0+ 4,1.5) {};
  %\fill[black] (8) circle (1pt);
  %\node (9112) at (-4.25,0.5+ 4) {};
  %\fill[black] (9112) circle (1pt);
  %\node (10) at (0+ 4,2) {};
  %\fill[black] (10) circle (1pt);
 
  %\node (11) at (0+ 4,0) {};
  %\fill[black] (11) circle (1pt);
  \node (12112) at (-3.25,0.5+ 4) {};
  \fill[black] (12112) circle (1pt);
  \node (13112) at (-3.25,1+ 4) {};
 \fill[black] (13112) circle (1pt);
  \node (14112) at (-3.25,1.5+ 4) {};
  \fill[black] (14112) circle (1pt);

%\node (39112) at (-3.25,2+ 4) {};
 %\fill[black] (39112) circle (1pt);
 \draw  (13112) -- (14112);
\draw[black]  (12112) -- (13112);
 \draw (28112) -- (29112) ;
\draw[black] (27112) -- (28112);
  %\node (15) at (1.0+ 4,-0.37) {\vdots};
  %\node (16) at (0.8+ 4,-0.37) {\vdots};
   
    \node (16112) at (-3,1+ 4) {\textcolor{black}{...}};
     \node (16112x) at (-3,1.5+ 4) {...};
    %\node (16112xx) at (-3,2+ 4) {...};
    \node (16112xxx) at (-3,0.5+ 4) {\textcolor{black}{...}};
   
%\node (9112) at (-4.75,0.5+ 4) {};
 %\fill[black] (9112) circle (1pt);
  %  \fill[black] (-4.8,0.5+ 3.95) rectangle (-4.7,0.5+ 4.05);
   %\draw (1)  -- (2);
   %\draw (1)  -- (3);
   \draw[black] (311)  -- (411);
   \draw (411)  -- (511);
   %\draw (2)  -- (6);
   \draw[black] (611)  -- (711);
   \draw[black] (311)  -- (611);
   \draw[black] (411)  -- (711);
   %\draw (2)  -- (8);
   \draw[black] (611)  -- (911);
   %\draw (8)  -- (10);
   %\draw (8)  -- (9);
   %\draw (1)  -- (11) ;
   \draw[black] (311)  -- (1211) ;
   \draw[black] (411)  -- (1311) ;
   \draw (511)  -- (1411) ;
    %\draw  (26) -- (16);
   \draw[black] (2711) -- (1711);
   \draw[black] (2811) -- (1811);
   \draw  (2911) -- (1911);
   %\draw (2011) -- (2111);
   \draw[black] (1711) -- (1811);
    \draw[black] (2711) -- (2811);
    \draw (1811) -- (1911);
   %\draw (16) -- (30);
   \draw[black] (1711) -- (3111);
   \draw[black] (1811) -- (3211);
   \draw (1911) -- (3311);
   %\draw (2011) -- (3411);
   \draw[black] (3111) -- (3211);
  \draw (3211) -- (3311);
    %\draw (3611) -- (3711) -- (3811);
   %\draw (2311) -- (2511);
    %\draw (2311) -- (2411);
    %\draw (2311) -- (3611);
    %\draw (2411) -- (3711);
       
        %\draw (2711) -- (17112);
   %\draw (2811) -- (18112);
   %\draw  (2911) -- (19112);
   %\draw (20112) -- (21112);
   \draw (18112) -- (19112);
    \draw[black] (17112) -- (18112);
   %\draw (16) -- (30);
   \draw[black] (17112) -- (3111);
   \draw[black] (18112) -- (3211);
   \draw (19112) -- (3311);
   %\draw (20112) -- (3411);
   %\draw (3211) -- (3311) -- (3411);
   %\draw (3611) -- (3711) -- (3811);
   %\draw (23112) -- (2511);
    %\draw (23112) -- (24112);
    %\draw (23112) -- (3611);
    %\draw (24112) -- (3711);

         \draw[black] (3112)  -- (4112);
   \draw (4112)  -- (5112);
   %\draw (2)  -- (6);
   %\draw[black] (6112)  -- (7112);
   %\draw[black] (3112)  -- (6112);
   %\draw[black] (4112)  -- (7112);
   %\draw (2)  -- (8);
   %\draw[black] (6112)  -- (9112);
   %\draw (8)  -- (10);
   %\draw (8)  -- (9);
   %\draw (1)  -- (11) ;
   \draw[black] (3112)  -- (12112) ;
   \draw[black] (4112)  -- (13112) ;
   \draw (5112)  -- (14112) ;
    %\draw  (26) -- (16);
   \draw[black] (27112) -- (17112);
   \draw[black] (28112) -- (18112);
   \draw  (29112) -- (19112);
   %\draw (20112) -- (21112);
 
   %\draw (23112) -- (25112);
    %\draw (23112) -- (24112);

\end{tikzpicture}
\end{center}

 Let $\bt\in\mathbf T(1)$ be the multi--exponent with $\supp(\bt)=\{\al_{3,2n-3},\al_{k,3}\}$, where $k=\min \{1\leq j\leq 2 \mid s_{j,3}\neq 0\}$. If $k$ exists, it is easy to see that $\bt\in S(\mathbf D,\omega_3)$ and $\bs-\bt\in S(\mathbf D,(m-1)\omega_3)$. So suppose that $s_{1,3}=s_{2,3}=0.$

\begin{center}
\begin{tikzpicture}

%% BN Teil
  \node (311) at (0.25,0.5 + 4) {};
  \fill[black] (311) circle (1pt);
  \node (411) at (0.25,1+ 4) {};
  \fill[black] (411) circle (1pt);
  \node (511) at (0.25,1.5+ 4) {};
  \fill[black] (0.16,5.41) rectangle (0.34,5.59);
  \node (611) at (0.75,0.5+ 4) {};
  \fill[black] (611) circle (1pt);
  \node (711) at (0.75,1+ 4) {};
  \fill[black] (711) circle (1pt);
   %\node (8) at (0+ 4,1.5) {};
  %\fill[black] (8) circle (1pt);
  \node (911) at (1.25,0.5+ 4) {};
  \fill[black] (911) circle (2.5pt);
  %\node (10) at (0+ 4,2) {};
  %\fill[black] (10) circle (1pt);
 
  %\node (11) at (0+ 4,0) {};
  %\fill[black] (11) circle (1pt);
  \node (1211) at (-0.25,0.5+ 4) {};
  \fill[black] (1211) circle (1pt);
  \node (1311) at (-0.25,1+ 4) {};
 \fill[black] (1311) circle (1pt);
  \node (1411) at (-0.25,1.5+ 4) {};
  \fill[black] (1411) circle (1pt);
  %\node (26) at (0+ 4,-0.75) {};
 %\fill[black] (26) circle (1pt);
  \node (2711) at (-0.75,0.5+ 4) {};
  \fill[black] (2711) circle (1pt);
  \node (2811) at (-0.75,1+ 4) {};
  \fill[black] (2811) circle (1pt);
  \node (2911) at (-0.75,1.5+ 4) {};
  \fill[black] (2911) circle (1pt);
 %\node (2111) at (-0.75,2+ 4) {};
 %\fill[black] (2111) circle (1pt);
 %\node (2511) at (-0.75,2.5+ 4) {};

 %\node (4011) at (-0.75,2.15+ 4) {.};
%\node (4011x) at (-0.75,2.25+ 4) {.};
%\node (4011xx) at (-0.75,2.35+ 4) {.};

 %\fill[black] (2511) circle (1pt);
 %\node (3911) at (-0.250,2+ 4) {};
 %\fill[black] (3911) circle (1pt);
 \draw  (1311) -- (1411);
\draw[black]  (1211) -- (1311);
 \draw (2811) -- (2911);
  %\node (15) at (1.0+ 4,-0.37) {\vdots};
  %\node (16) at (0.8+ 4,-0.37) {\vdots};
     \node (1611) at (-0.5,0.5+ 4) {\textcolor{black}{...}};
    %\node (1611x) at (-0.5,2+ 4) {...};
    \node (1611xx) at (-0.5,1.5+ 4) {...};
    \node (1611xxx) at (-0.5,1+ 4) {\textcolor{black}{...}};
     %\node (16) at (0.4+ 4,-0.37) {\vdots};
     %\node (16) at (0.6+ 4,-0.37) {\vdots};
 %\node (16) at (0+ 4,-1.25) {};
  %\fill[black] (16) circle (1pt);
	
  \node (1711) at (-1.25,0.5+ 4) {};
  \fill[black] (1711) circle (1pt);
  \node (1811) at (-1.25,1+ 4) {};
  \fill[black] (1811) circle (1pt);
  \node (1911) at (-1.25,1.5+ 4) {};
  \fill[black] (1911) circle (1pt);
	
  %\node (2011) at (-1.25,2+ 4) {};
  %\fill[black] (2011) circle (1pt);
   
  %\node (2211) at (-1.25,2.15+ 4) {.};
    %\node (2211x) at (-1.25,2.25+ 4) {.};
    %\node (2211xx) at (-1.25,2.35+ 4) {.};
   
  %\node (2311) at (-1.25,2.5+ 4) {};
  %\fill[black] (2311) circle (1pt);
  %\node (2411) at (-1.25,3+ 4) {};
  %\fill[black] (2411) circle (1pt);
 
  %\node (30) at (0+ 4,-1.75) {};
  %\fill[black] (30) circle (1pt);
  \node (3111) at (-1.75,0.5+ 4) {};
  \fill[black] (3111) circle (1pt);
  \node (3211) at (-1.75,1+ 4) {};
  \fill[black] (3211) circle (1pt);
  \node (3311) at (-1.75,1.5+ 4) {};
  \fill[black] (3311) circle (1pt);
  %\node (3411) at (-1.75,2+ 4) {};
  %\fill[black] (3411) circle (1pt);
   
  %\node (3511) at (-1.75,2.15+ 4) {.};
    %\node (3511xx) at (-1.75,2.25+ 4) {.};
    %\node (3511xxx) at (-1.75,2.35+ 4) {.};
   
  %\node (3611) at (-1.75,2.5+ 4) {};
  %\fill[black] (3611) circle (1pt);
  %\node (3711) at (-1.75,3+ 4) {};
  %\fill[black] (3711) circle (1pt);
  %\node (3811) at (-1.75,3.5+ 4) {};
  %\fill[black] (3811) circle (1pt);
 
 \node (17112) at (-2.25,0.5+ 4) {};
  \fill[black] (17112) circle (1pt);
  \node (18112) at (-2.25,1+ 4) {};
  \fill[black] (18112) circle (1pt);
  \node (19112) at (-2.25,1.5+ 4) {};
  \fill[black] (19112) circle (1pt);
  %\node (20112) at (-2.25,2+ 4) {};
  %\fill[black] (20112) circle (1pt);
   
  %\node (22112x) at (-2.25,2.15+ 4) {.};
    %\node (22112xx) at (-2.25,2.25+ 4) {.};
    %\node (22112xxx) at (-2.25,2.35+ 4) {.};
   
  %\node (23112) at (-2.25,2.5+ 4) {};
  %\fill[black] (23112) circle (1pt);
  %\node (24112) at (-2.25,3+ 4) {};
  %\fill[black] (24112) circle (1pt);

\node (27112) at (-2.75,0.5+ 4) {};
  \fill[black] (27112) circle (1pt);
  \node (28112) at (-2.75,1+ 4) {};
  \fill[black] (28112) circle (1pt);
  \node (29112) at (-2.75,1.5+ 4) {};
  \fill[black] (29112) circle (1pt);
 %\node (21112) at (-2.75,2+ 4) {};
 %\fill[black] (21112) circle (1pt);
 %\node (25112) at (-2.75,2.5+ 4) {};
%\fill[black] (25112) circle (1pt);
%\node (40112) at (-2.75,2.15+ 4) {.};
%\node (40112x) at (-2.75,2.25+ 4) {.};
%\node (40112xx) at (-2.75,2.35+ 4) {.};

\node (3112) at (-3.75,0.5 + 4) {};
  \fill[black] (3112) circle (2.5pt);
  \node (4112) at (-3.75,1+ 4) {};
  \fill[black] (4112) circle (2.5pt);
  \node (5112) at (-3.75,1.5+ 4) {};
  \fill[black] (5112) circle (1pt);
  %\node (6112) at (-4.25,0.5+ 4) {};
  %\fill[black] (6112) circle (1pt);
  %\node (7112) at (-4.25,1+ 4) {};
  %\fill[black] (7112) circle (1pt);
   %\node (8) at (0+ 4,1.5) {};
  %\fill[black] (8) circle (1pt);
  %\node (9112) at (-4.25,0.5+ 4) {};
  %\fill[black] (9112) circle (1pt);
  %\node (10) at (0+ 4,2) {};
  %\fill[black] (10) circle (1pt);
 
  %\node (11) at (0+ 4,0) {};
  %\fill[black] (11) circle (1pt);
  \node (12112) at (-3.25,0.5+ 4) {};
  \fill[black] (12112) circle (1pt);
  \node (13112) at (-3.25,1+ 4) {};
 \fill[black] (13112) circle (1pt);
  \node (14112) at (-3.25,1.5+ 4) {};
  \fill[black] (14112) circle (1pt);

%\node (39112) at (-3.25,2+ 4) {};
 %\fill[black] (39112) circle (1pt);
 \draw  (13112) -- (14112);
\draw[black]  (12112) -- (13112);
 \draw (28112) -- (29112) ;
\draw[black] (27112) -- (28112);
  %\node (15) at (1.0+ 4,-0.37) {\vdots};
  %\node (16) at (0.8+ 4,-0.37) {\vdots};
   
    \node (16112) at (-3,1+ 4) {\textcolor{black}{...}};
     \node (16112x) at (-3,1.5+ 4) {...};
    %\node (16112xx) at (-3,2+ 4) {...};
    \node (16112xxx) at (-3,0.5+ 4) {\textcolor{black}{...}};
   
%\node (9112) at (-4.75,0.5+ 4) {};
 %\fill[black] (9112) circle (1pt);
  %  \fill[black] (-4.8,0.5+ 3.95) rectangle (-4.7,0.5+ 4.05);
   %\draw (1)  -- (2);
   %\draw (1)  -- (3);
   \draw[black] (311)  -- (411);
   \draw (411)  -- (511);
   %\draw (2)  -- (6);
   \draw[black] (611)  -- (711);
   \draw[black] (311)  -- (611);
   \draw[black] (411)  -- (711);
   %\draw (2)  -- (8);
   \draw[black] (611)  -- (911);
   %\draw (8)  -- (10);
   %\draw (8)  -- (9);
   %\draw (1)  -- (11) ;
   \draw[black] (311)  -- (1211) ;
   \draw[black] (411)  -- (1311) ;
   \draw (511)  -- (1411) ;
    %\draw  (26) -- (16);
   \draw[black] (2711) -- (1711);
   \draw[black] (2811) -- (1811);
   \draw  (2911) -- (1911);
   %\draw (2011) -- (2111);
   \draw[black] (1711) -- (1811);
    \draw[black] (2711) -- (2811);
    \draw (1811) -- (1911);
   %\draw (16) -- (30);
   \draw[black] (1711) -- (3111);
   \draw[black] (1811) -- (3211);
   \draw (1911) -- (3311);
   %\draw (2011) -- (3411);
   \draw[black] (3111) -- (3211);
  \draw (3211) -- (3311);
    %\draw (3611) -- (3711) -- (3811);
   %\draw (2311) -- (2511);
    %\draw (2311) -- (2411);
    %\draw (2311) -- (3611);
    %\draw (2411) -- (3711);
       
        %\draw (2711) -- (17112);
   %\draw (2811) -- (18112);
   %\draw  (2911) -- (19112);
   %\draw (20112) -- (21112);
   \draw (18112) -- (19112);
    \draw[black] (17112) -- (18112);
   %\draw (16) -- (30);
   \draw[black] (17112) -- (3111);
   \draw[black] (18112) -- (3211);
   \draw (19112) -- (3311);
   %\draw (20112) -- (3411);
   %\draw (3211) -- (3311) -- (3411);
   %\draw (3611) -- (3711) -- (3811);
   %\draw (23112) -- (2511);
    %\draw (23112) -- (24112);
    %\draw (23112) -- (3611);
    %\draw (24112) -- (3711);

         \draw[black] (3112)  -- (4112);
   \draw (4112)  -- (5112);
   %\draw (2)  -- (6);
   %\draw[black] (6112)  -- (7112);
   %\draw[black] (3112)  -- (6112);
   %\draw[black] (4112)  -- (7112);
   %\draw (2)  -- (8);
  
   %\draw (8)  -- (10);
   %\draw (8)  -- (9);
   %\draw (1)  -- (11) ;
   \draw[black] (3112)  -- (12112) ;
   \draw[black] (4112)  -- (13112) ;
   \draw (5112)  -- (14112) ;
    %\draw  (26) -- (16);
   \draw[black] (27112) -- (17112);
   \draw[black] (28112) -- (18112);
   \draw  (29112) -- (19112);
   %\draw (20112) -- (21112);
 
   %\draw (23112) -- (25112);
    %\draw (23112) -- (24112);

\end{tikzpicture}
\end{center}

Now we consider two additional cases. 
\vspace{3pt}

\textbf{Case {1.1}:} First we assume that $\sum^{2n-4}_{k=3}s_{3,k}=m$ (sum over the red dots and the red square), which forces $s_{3,3}\neq 0$. 

\begin{center}
\begin{tikzpicture}

%% BN Teil
  \node (311) at (0.25,0.5 + 4) {};
  \fill[black] (311) circle (1pt);
  \node (411) at (0.25,1+ 4) {};
  \fill[black] (411) circle (1pt);
  \node (511) at (0.25,1.5+ 4) {};
  \fill[black] (0.16,5.41) rectangle (0.34,5.59);
  \node (611) at (0.75,0.5+ 4) {};
  \fill[black] (611) circle (1pt);
  \node (711) at (0.75,1+ 4) {};
  \fill[black] (711) circle (1pt);
   %\node (8) at (0+ 4,1.5) {};
  %\fill[black] (8) circle (1pt);
  \node (911) at (1.25,0.5+ 4) {};
  \fill[black] (911) circle (2.5pt);
  %\node (10) at (0+ 4,2) {};
  %\fill[black] (10) circle (1pt);
 
  %\node (11) at (0+ 4,0) {};
  %\fill[black] (11) circle (1pt);
  \node (1211) at (-0.25,0.5+ 4) {};
  \fill[black] (1211) circle (1pt);
  \node (1311) at (-0.25,1+ 4) {};
 \fill[black] (1311) circle (1pt);
  \node (1411) at (-0.25,1.5+ 4) {};
  \fill[red] (1411) circle (1pt);
  %\node (26) at (0+ 4,-0.75) {};
 %\fill[black] (26) circle (1pt);
  \node (2711) at (-0.75,0.5+ 4) {};
  \fill[black] (2711) circle (1pt);
  \node (2811) at (-0.75,1+ 4) {};
  \fill[black] (2811) circle (1pt);
  \node (2911) at (-0.75,1.5+ 4) {};
  \fill[red] (2911) circle (1pt);
 %\node (2111) at (-0.75,2+ 4) {};
 %\fill[black] (2111) circle (1pt);
 %\node (2511) at (-0.75,2.5+ 4) {};

 %\node (4011) at (-0.75,2.15+ 4) {.};
%\node (4011x) at (-0.75,2.25+ 4) {.};
%\node (4011xx) at (-0.75,2.35+ 4) {.};

 %\fill[black] (2511) circle (1pt);
 %\node (3911) at (-0.250,2+ 4) {};
 %\fill[black] (3911) circle (1pt);
 \draw  (1311) -- (1411);
\draw[black]  (1211) -- (1311);
 \draw (2811) -- (2911);
  %\node (15) at (1.0+ 4,-0.37) {\vdots};
  %\node (16) at (0.8+ 4,-0.37) {\vdots};
     \node (1611) at (-0.5,0.5+ 4) {\textcolor{black}{...}};
    %\node (1611x) at (-0.5,2+ 4) {...};
    \node (1611xx) at (-0.5,1.5+ 4) {...};
    \node (1611xxx) at (-0.5,1+ 4) {\textcolor{black}{...}};
     %\node (16) at (0.4+ 4,-0.37) {\vdots};
     %\node (16) at (0.6+ 4,-0.37) {\vdots};
 %\node (16) at (0+ 4,-1.25) {};
  %\fill[black] (16) circle (1pt);
	
  \node (1711) at (-1.25,0.5+ 4) {};
  \fill[black] (1711) circle (1pt);
  \node (1811) at (-1.25,1+ 4) {};
  \fill[black] (1811) circle (1pt);
  \node (1911) at (-1.25,1.5+ 4) {};
  \fill[red] (1911) circle (1pt);
	
  %\node (2011) at (-1.25,2+ 4) {};
  %\fill[black] (2011) circle (1pt);
   
  %\node (2211) at (-1.25,2.15+ 4) {.};
    %\node (2211x) at (-1.25,2.25+ 4) {.};
    %\node (2211xx) at (-1.25,2.35+ 4) {.};
   
  %\node (2311) at (-1.25,2.5+ 4) {};
  %\fill[black] (2311) circle (1pt);
  %\node (2411) at (-1.25,3+ 4) {};
  %\fill[black] (2411) circle (1pt);
 
  %\node (30) at (0+ 4,-1.75) {};
  %\fill[black] (30) circle (1pt);
  \node (3111) at (-1.75,0.5+ 4) {};
  \fill[black] (3111) circle (1pt);
  \node (3211) at (-1.75,1+ 4) {};
  \fill[black] (3211) circle (1pt);
  \node (3311) at (-1.75,1.5+ 4) {};
  \fill[red] (3311) circle (1pt);
  %\node (3411) at (-1.75,2+ 4) {};
  %\fill[black] (3411) circle (1pt);
   
  %\node (3511) at (-1.75,2.15+ 4) {.};
    %\node (3511xx) at (-1.75,2.25+ 4) {.};
    %\node (3511xxx) at (-1.75,2.35+ 4) {.};
   
  %\node (3611) at (-1.75,2.5+ 4) {};
  %\fill[black] (3611) circle (1pt);
  %\node (3711) at (-1.75,3+ 4) {};
  %\fill[black] (3711) circle (1pt);
  %\node (3811) at (-1.75,3.5+ 4) {};
  %\fill[black] (3811) circle (1pt);
 
 \node (17112) at (-2.25,0.5+ 4) {};
  \fill[black] (17112) circle (1pt);
  \node (18112) at (-2.25,1+ 4) {};
  \fill[black] (18112) circle (1pt);
  \node (19112) at (-2.25,1.5+ 4) {};
  \fill[red] (19112) circle (1pt);
  %\node (20112) at (-2.25,2+ 4) {};
  %\fill[black] (20112) circle (1pt);
   
  %\node (22112x) at (-2.25,2.15+ 4) {.};
    %\node (22112xx) at (-2.25,2.25+ 4) {.};
    %\node (22112xxx) at (-2.25,2.35+ 4) {.};
   
  %\node (23112) at (-2.25,2.5+ 4) {};
  %\fill[black] (23112) circle (1pt);
  %\node (24112) at (-2.25,3+ 4) {};
  %\fill[black] (24112) circle (1pt);

\node (27112) at (-2.75,0.5+ 4) {};
  \fill[black] (27112) circle (1pt);
  \node (28112) at (-2.75,1+ 4) {};
  \fill[black] (28112) circle (1pt);
  \node (29112) at (-2.75,1.5+ 4) {};
  \fill[red] (29112) circle (1pt);
 %\node (21112) at (-2.75,2+ 4) {};
 %\fill[black] (21112) circle (1pt);
 %\node (25112) at (-2.75,2.5+ 4) {};
%\fill[black] (25112) circle (1pt);
%\node (40112) at (-2.75,2.15+ 4) {.};
%\node (40112x) at (-2.75,2.25+ 4) {.};
%\node (40112xx) at (-2.75,2.35+ 4) {.};

\node (3112) at (-3.75,0.5 + 4) {};
  \fill[black] (3112) circle (2.5pt);
  \node (4112) at (-3.75,1+ 4) {};
  \fill[black] (4112) circle (2.5pt);
  \node (5112) at (-3.75,1.5+ 4) {};
  \fill[red] (-3.84,5.41) rectangle (-3.66,5.59);
  %\node (6112) at (-4.25,0.5+ 4) {};
  %\fill[black] (6112) circle (1pt);
  %\node (7112) at (-4.25,1+ 4) {};
  %\fill[black] (7112) circle (1pt);
   %\node (8) at (0+ 4,1.5) {};
  %\fill[black] (8) circle (1pt);
  %\node (9112) at (-4.25,0.5+ 4) {};
  %\fill[black] (9112) circle (1pt);
  %\node (10) at (0+ 4,2) {};
  %\fill[black] (10) circle (1pt);
 
  %\node (11) at (0+ 4,0) {};
  %\fill[black] (11) circle (1pt);
  \node (12112) at (-3.25,0.5+ 4) {};
  \fill[black] (12112) circle (1pt);
  \node (13112) at (-3.25,1+ 4) {};
 \fill[black] (13112) circle (1pt);
  \node (14112) at (-3.25,1.5+ 4) {};
  \fill[red] (14112) circle (1pt);

%\node (39112) at (-3.25,2+ 4) {};
 %\fill[black] (39112) circle (1pt);
 \draw  (13112) -- (14112);
\draw[black]  (12112) -- (13112);
 \draw (28112) -- (29112) ;
\draw[black] (27112) -- (28112);
  %\node (15) at (1.0+ 4,-0.37) {\vdots};
  %\node (16) at (0.8+ 4,-0.37) {\vdots};
   
    \node (16112) at (-3,1+ 4) {\textcolor{black}{...}};
     \node (16112x) at (-3,1.5+ 4) {...};
    %\node (16112xx) at (-3,2+ 4) {...};
    \node (16112xxx) at (-3,0.5+ 4) {\textcolor{black}{...}};
   
%\node (9112) at (-4.75,0.5+ 4) {};
 %\fill[black] (9112) circle (1pt);
  %  \fill[black] (-4.8,0.5+ 3.95) rectangle (-4.7,0.5+ 4.05);
   %\draw (1)  -- (2);
   %\draw (1)  -- (3);
   \draw[black] (311)  -- (411);
   \draw (411)  -- (511);
   %\draw (2)  -- (6);
   \draw[black] (611)  -- (711);
   \draw[black] (311)  -- (611);
   \draw[black] (411)  -- (711);
   %\draw (2)  -- (8);
   \draw[black] (611)  -- (911);
   %\draw (8)  -- (10);
   %\draw (8)  -- (9);
   %\draw (1)  -- (11) ;
   \draw[black] (311)  -- (1211) ;
   \draw[black] (411)  -- (1311) ;
   \draw (511)  -- (1411) ;
    %\draw  (26) -- (16);
   \draw[black] (2711) -- (1711);
   \draw[black] (2811) -- (1811);
   \draw  (2911) -- (1911);
   %\draw (2011) -- (2111);
   \draw[black] (1711) -- (1811);
    \draw[black] (2711) -- (2811);
    \draw (1811) -- (1911);
   %\draw (16) -- (30);
   \draw[black] (1711) -- (3111);
   \draw[black] (1811) -- (3211);
   \draw (1911) -- (3311);
   %\draw (2011) -- (3411);
   \draw[black] (3111) -- (3211);
  \draw (3211) -- (3311);
    %\draw (3611) -- (3711) -- (3811);
   %\draw (2311) -- (2511);
    %\draw (2311) -- (2411);
    %\draw (2311) -- (3611);
    %\draw (2411) -- (3711);
       
        %\draw (2711) -- (17112);
   %\draw (2811) -- (18112);
   %\draw  (2911) -- (19112);
   %\draw (20112) -- (21112);
   \draw (18112) -- (19112);
    \draw[black] (17112) -- (18112);
   %\draw (16) -- (30);
   \draw[black] (17112) -- (3111);
   \draw[black] (18112) -- (3211);
   \draw (19112) -- (3311);
   %\draw (20112) -- (3411);
   %\draw (3211) -- (3311) -- (3411);
   %\draw (3611) -- (3711) -- (3811);
   %\draw (23112) -- (2511);
    %\draw (23112) -- (24112);
    %\draw (23112) -- (3611);
    %\draw (24112) -- (3711);

         \draw[black] (3112)  -- (4112);
   \draw (4112)  -- (5112);
   %\draw (2)  -- (6);
   %\draw[black] (6112)  -- (7112);
   %\draw[black] (3112)  -- (6112);
   %\draw[black] (4112)  -- (7112);
   %\draw (2)  -- (8);
  
   %\draw (8)  -- (10);
   %\draw (8)  -- (9);
   %\draw (1)  -- (11) ;
   \draw[black] (3112)  -- (12112) ;
   \draw[black] (4112)  -- (13112) ;
   \draw (5112)  -- (14112) ;
    %\draw  (26) -- (16);
   \draw[black] (27112) -- (17112);
   \draw[black] (28112) -- (18112);
   \draw  (29112) -- (19112);
   %\draw (20112) -- (21112);
 
   %\draw (23112) -- (25112);
    %\draw (23112) -- (24112);

\end{tikzpicture}
\end{center}

Then we define $\bt\in\mathbf T(1)$ to be the multi--exponent with $\supp(\bt)=\{\al_{3,2n-3},\al_{3,3}\}$. We shall prove that $\bs-\bt\in S(\mathbf D,(m-1)\omega_3)$. For any $\mathbf p\in \mathbf D^{\typ 1}$ we obviously have $\sum_{\beta\in \mathbf p}(s_{\beta}-t_{\beta})\leq m-1$. So let $\mathbf p=\mathbf p_1\cup \mathbf p_1\in \mathbf D^{\typ 2}\backslash \bq$. If $\al_{3,3}\in \mathbf p_2$, there is nothing to show. Otherwise we get that $\mathbf p_2$ is of the form
 $$\bp_2 =\{\al_{2,3},\al_{2,4} \dots \al_{2,p},\al_{3,p},\al_{3,p+1},\dots \al_{3,2n-3}\},\ 3<p\leq 2n-3$$

and $$\sum_{\beta\in \bp_2}s_{\beta}\leq m=s_{2,3}+\sum^{2n-4}_{k=3}s_{3,k}.$$ It follows

$$\sum_{\beta\in \mathbf p_1}(s_{\beta}-t_{\beta})+\sum_{\beta\in \mathbf p_2}(s_{\beta}-t_{\beta})\leq \sum_{\beta\in \mathbf p_1}(s_{\beta}-t_{\beta})+s_{2,3}+\sum^{2n-3}_{k=3}(s_{3,k}-t_{3,k})\leq 2(m-1).$$
\vspace{3pt}

\textbf{Case {1.2}:}
It remains to consider the case $\sum^{2n-4}_{k=3}s_{3,k}\leq m-1$. Since $s_{1,3}=s_{2,3}=0$ it is enough to construct a multi--exponent $\mathbf{t}\in  S(\mathbf D,\omega_3)$ such that 

\begin{equation}\label{reichtll}\sum_{\beta\in \mathbf p}(s_{\beta}-t_{\beta})\leq M_{\bp}((m-1)\omega_3),\ \forall \mathbf p\in \mathbf D_2^{\typ 1}\cup \mathbf D^{\typ 2}.\end{equation}

We define $\bt\in\mathbf T(1)$ to be the multi--exponent with $\supp(\bt)=\{\al_{3,2n-3}\}$ if $s_{1,2n-2}=s_{2,2n-2}=0$ and otherwise $\supp(\bt)=\{\al_{3,2n-3},\al_{k,2n-2}\}$, where $k=\max \{1\leq j \leq 2 \mid s_{j,2n-2}\neq 0\}$. In either case $\bt\in S(\mathbf D,\omega_3)$ and if $s_{1,2n-2}=s_{2,2n-2}=0$ or $s_{2,2n-2}\neq 0$ it is easy to verify that \eqref{reichtll} holds. So suppose that $s_{2,2n-2}=0$, $s_{1,2n-2}\neq 0$ and let $\mathbf p\in D_2^{\typ 1}\cup \mathbf D^{\typ 2}$.

\begin{center}
\begin{tikzpicture}

%% BN Teil
  \node (311) at (0.25,0.5 + 4) {};
  \fill[black] (311) circle (1pt);
  \node (411) at (0.25,1+ 4) {};
  \fill[black] (411) circle (1pt);
  \node (511) at (0.25,1.5+ 4) {};
  \fill[black] (0.16,5.41) rectangle (0.34,5.59);
  \node (611) at (0.75,0.5+ 4) {};
  \fill[black] (0.66,4.41) rectangle (0.84,4.59);
  \node (711) at (0.75,1+ 4) {};
  \fill[black] (711) circle (2.5pt);
   %\node (8) at (0+ 4,1.5) {};
  %\fill[black] (8) circle (1pt);
  \node (911) at (1.25,0.5+ 4) {};
  \fill[black] (911) circle (2.5pt);
  %\node (10) at (0+ 4,2) {};
  %\fill[black] (10) circle (1pt);
 
  %\node (11) at (0+ 4,0) {};
  %\fill[black] (11) circle (1pt);
  \node (1211) at (-0.25,0.5+ 4) {};
  \fill[black] (1211) circle (1pt);
  \node (1311) at (-0.25,1+ 4) {};
 \fill[black] (1311) circle (1pt);
  \node (1411) at (-0.25,1.5+ 4) {};
  \fill[red] (1411) circle (1pt);
  %\node (26) at (0+ 4,-0.75) {};
 %\fill[black] (26) circle (1pt);
  \node (2711) at (-0.75,0.5+ 4) {};
  \fill[black] (2711) circle (1pt);
  \node (2811) at (-0.75,1+ 4) {};
  \fill[black] (2811) circle (1pt);
  \node (2911) at (-0.75,1.5+ 4) {};
  \fill[red] (2911) circle (1pt);
 %\node (2111) at (-0.75,2+ 4) {};
 %\fill[black] (2111) circle (1pt);
 %\node (2511) at (-0.75,2.5+ 4) {};

 %\node (4011) at (-0.75,2.15+ 4) {.};
%\node (4011x) at (-0.75,2.25+ 4) {.};
%\node (4011xx) at (-0.75,2.35+ 4) {.};

 %\fill[black] (2511) circle (1pt);
 %\node (3911) at (-0.250,2+ 4) {};
 %\fill[black] (3911) circle (1pt);
 \draw  (1311) -- (1411);
\draw[black]  (1211) -- (1311);
 \draw (2811) -- (2911);
  %\node (15) at (1.0+ 4,-0.37) {\vdots};
  %\node (16) at (0.8+ 4,-0.37) {\vdots};
     \node (1611) at (-0.5,0.5+ 4) {\textcolor{black}{...}};
    %\node (1611x) at (-0.5,2+ 4) {...};
    \node (1611xx) at (-0.5,1.5+ 4) {...};
    \node (1611xxx) at (-0.5,1+ 4) {\textcolor{black}{...}};
     %\node (16) at (0.4+ 4,-0.37) {\vdots};
     %\node (16) at (0.6+ 4,-0.37) {\vdots};
 %\node (16) at (0+ 4,-1.25) {};
  %\fill[black] (16) circle (1pt);
	
  \node (1711) at (-1.25,0.5+ 4) {};
  \fill[black] (1711) circle (1pt);
  \node (1811) at (-1.25,1+ 4) {};
  \fill[black] (1811) circle (1pt);
  \node (1911) at (-1.25,1.5+ 4) {};
  \fill[red] (1911) circle (1pt);
	
  %\node (2011) at (-1.25,2+ 4) {};
  %\fill[black] (2011) circle (1pt);
   
  %\node (2211) at (-1.25,2.15+ 4) {.};
    %\node (2211x) at (-1.25,2.25+ 4) {.};
    %\node (2211xx) at (-1.25,2.35+ 4) {.};
   
  %\node (2311) at (-1.25,2.5+ 4) {};
  %\fill[black] (2311) circle (1pt);
  %\node (2411) at (-1.25,3+ 4) {};
  %\fill[black] (2411) circle (1pt);
 
  %\node (30) at (0+ 4,-1.75) {};
  %\fill[black] (30) circle (1pt);
  \node (3111) at (-1.75,0.5+ 4) {};
  \fill[black] (3111) circle (1pt);
  \node (3211) at (-1.75,1+ 4) {};
  \fill[black] (3211) circle (1pt);
  \node (3311) at (-1.75,1.5+ 4) {};
  \fill[red] (3311) circle (1pt);
  %\node (3411) at (-1.75,2+ 4) {};
  %\fill[black] (3411) circle (1pt);
   
  %\node (3511) at (-1.75,2.15+ 4) {.};
    %\node (3511xx) at (-1.75,2.25+ 4) {.};
    %\node (3511xxx) at (-1.75,2.35+ 4) {.};
   
  %\node (3611) at (-1.75,2.5+ 4) {};
  %\fill[black] (3611) circle (1pt);
  %\node (3711) at (-1.75,3+ 4) {};
  %\fill[black] (3711) circle (1pt);
  %\node (3811) at (-1.75,3.5+ 4) {};
  %\fill[black] (3811) circle (1pt);
 
 \node (17112) at (-2.25,0.5+ 4) {};
  \fill[black] (17112) circle (1pt);
  \node (18112) at (-2.25,1+ 4) {};
  \fill[black] (18112) circle (1pt);
  \node (19112) at (-2.25,1.5+ 4) {};
  \fill[red] (19112) circle (1pt);
  %\node (20112) at (-2.25,2+ 4) {};
  %\fill[black] (20112) circle (1pt);
   
  %\node (22112x) at (-2.25,2.15+ 4) {.};
    %\node (22112xx) at (-2.25,2.25+ 4) {.};
    %\node (22112xxx) at (-2.25,2.35+ 4) {.};
   
  %\node (23112) at (-2.25,2.5+ 4) {};
  %\fill[black] (23112) circle (1pt);
  %\node (24112) at (-2.25,3+ 4) {};
  %\fill[black] (24112) circle (1pt);

\node (27112) at (-2.75,0.5+ 4) {};
  \fill[black] (27112) circle (1pt);
  \node (28112) at (-2.75,1+ 4) {};
  \fill[black] (28112) circle (1pt);
  \node (29112) at (-2.75,1.5+ 4) {};
  \fill[red] (29112) circle (1pt);
 %\node (21112) at (-2.75,2+ 4) {};
 %\fill[black] (21112) circle (1pt);
 %\node (25112) at (-2.75,2.5+ 4) {};
%\fill[black] (25112) circle (1pt);
%\node (40112) at (-2.75,2.15+ 4) {.};
%\node (40112x) at (-2.75,2.25+ 4) {.};
%\node (40112xx) at (-2.75,2.35+ 4) {.};

\node (3112) at (-3.75,0.5 + 4) {};
  \fill[black] (3112) circle (2.5pt);
  \node (4112) at (-3.75,1+ 4) {};
  \fill[black] (4112) circle (2.5pt);
  \node (5112) at (-3.75,1.5+ 4) {};
  \fill[red] (-3.84,5.41) rectangle (-3.66,5.59);
  %\node (6112) at (-4.25,0.5+ 4) {};
  %\fill[black] (6112) circle (1pt);
  %\node (7112) at (-4.25,1+ 4) {};
  %\fill[black] (7112) circle (1pt);
   %\node (8) at (0+ 4,1.5) {};
  %\fill[black] (8) circle (1pt);
  %\node (9112) at (-4.25,0.5+ 4) {};
  %\fill[black] (9112) circle (1pt);
  %\node (10) at (0+ 4,2) {};
  %\fill[black] (10) circle (1pt);
 
  %\node (11) at (0+ 4,0) {};
  %\fill[black] (11) circle (1pt);
  \node (12112) at (-3.25,0.5+ 4) {};
  \fill[black] (12112) circle (1pt);
  \node (13112) at (-3.25,1+ 4) {};
 \fill[black] (13112) circle (1pt);
  \node (14112) at (-3.25,1.5+ 4) {};
  \fill[red] (14112) circle (1pt);

%\node (39112) at (-3.25,2+ 4) {};
 %\fill[black] (39112) circle (1pt);
 \draw  (13112) -- (14112);
\draw[black]  (12112) -- (13112);
 \draw (28112) -- (29112) ;
\draw[black] (27112) -- (28112);
  %\node (15) at (1.0+ 4,-0.37) {\vdots};
  %\node (16) at (0.8+ 4,-0.37) {\vdots};
   
    \node (16112) at (-3,1+ 4) {\textcolor{black}{...}};
     \node (16112x) at (-3,1.5+ 4) {...};
    %\node (16112xx) at (-3,2+ 4) {...};
    \node (16112xxx) at (-3,0.5+ 4) {\textcolor{black}{...}};
   
%\node (9112) at (-4.75,0.5+ 4) {};
 %\fill[black] (9112) circle (1pt);
  %  \fill[black] (-4.8,0.5+ 3.95) rectangle (-4.7,0.5+ 4.05);
   %\draw (1)  -- (2);
   %\draw (1)  -- (3);
   \draw[black] (311)  -- (411);
   \draw (411)  -- (511);
   %\draw (2)  -- (6);
   \draw[black] (611)  -- (711);
   \draw[black] (311)  -- (611);
   \draw[black] (411)  -- (711);
   %\draw (2)  -- (8);
   \draw[black] (611)  -- (911);
   %\draw (8)  -- (10);
   %\draw (8)  -- (9);
   %\draw (1)  -- (11) ;
   \draw[black] (311)  -- (1211) ;
   \draw[black] (411)  -- (1311) ;
   \draw (511)  -- (1411) ;
    %\draw  (26) -- (16);
   \draw[black] (2711) -- (1711);
   \draw[black] (2811) -- (1811);
   \draw  (2911) -- (1911);
   %\draw (2011) -- (2111);
   \draw[black] (1711) -- (1811);
    \draw[black] (2711) -- (2811);
    \draw (1811) -- (1911);
   %\draw (16) -- (30);
   \draw[black] (1711) -- (3111);
   \draw[black] (1811) -- (3211);
   \draw (1911) -- (3311);
   %\draw (2011) -- (3411);
   \draw[black] (3111) -- (3211);
  \draw (3211) -- (3311);
    %\draw (3611) -- (3711) -- (3811);
   %\draw (2311) -- (2511);
    %\draw (2311) -- (2411);
    %\draw (2311) -- (3611);
    %\draw (2411) -- (3711);
       
        %\draw (2711) -- (17112);
   %\draw (2811) -- (18112);
   %\draw  (2911) -- (19112);
   %\draw (20112) -- (21112);
   \draw (18112) -- (19112);
    \draw[black] (17112) -- (18112);
   %\draw (16) -- (30);
   \draw[black] (17112) -- (3111);
   \draw[black] (18112) -- (3211);
   \draw (19112) -- (3311);
   %\draw (20112) -- (3411);
   %\draw (3211) -- (3311) -- (3411);
   %\draw (3611) -- (3711) -- (3811);
   %\draw (23112) -- (2511);
    %\draw (23112) -- (24112);
    %\draw (23112) -- (3611);
    %\draw (24112) -- (3711);

         \draw[black] (3112)  -- (4112);
   \draw (4112)  -- (5112);
   %\draw (2)  -- (6);
   %\draw[black] (6112)  -- (7112);
   %\draw[black] (3112)  -- (6112);
   %\draw[black] (4112)  -- (7112);
   %\draw (2)  -- (8);
   %\draw[black] (6112)  -- (9112);
   %\draw (8)  -- (10);
   %\draw (8)  -- (9);
   %\draw (1)  -- (11) ;
   \draw[black] (3112)  -- (12112) ;
   \draw[black] (4112)  -- (13112) ;
   \draw (5112)  -- (14112) ;
    %\draw  (26) -- (16);
   \draw[black] (27112) -- (17112);
   \draw[black] (28112) -- (18112);
   \draw  (29112) -- (19112);
   %\draw (20112) -- (21112);
 
   %\draw (23112) -- (25112);
    %\draw (23112) -- (24112);

\end{tikzpicture}
\end{center}

 If $\bp\in \mathbf D_2^{\typ 1}$ the statement follows from $\alpha_{3,2n-3}\in \bp$. So let again $\mathbf p=\mathbf p_1\cup \mathbf p_2\in \mathbf D^{\typ 2}\backslash \bq$. If $\al_{1,2n-2}\in \mathbf p_1$, we are done. Otherwise set 
$$\overline{\bp}_1=\mathbf p_1\backslash \{\alpha_{1,3},\alpha_{2,2n-2}\}\cup \{\al_{3,2n-3}\},\ \overline{\bp}_2=\mathbf p_2\backslash \{\alpha_{2,3}\}\cup\{a_{1,4}\}.$$ This yields $\overline{\mathbf p}_1,\overline{\mathbf p}_2\in \mathbf D_2^{\typ 1}$ and therefore 
$$\sum_{\beta\in \mathbf p_1}(s_{\beta}-t_{\beta})+\sum_{\beta\in \mathbf p_2}(s_{\beta}-t_{\beta})\leq \sum_{\beta\in \overline{\mathbf p}_1}(s_{\beta}-t_{\beta})+\sum_{\beta\in \overline{\mathbf p}_2}(s_{\beta}-t_{\beta})\leq (m-1)+(m-1).$$\vspace{3pt}

This finishes Case 1; so from now on we can assume that $s_{3,2n-3}=0$. 

\begin{center}
\begin{tikzpicture}

%% BN Teil
  \node (311) at (0.25,0.5 + 4) {};
  \fill[black] (311) circle (1pt);
  \node (411) at (0.25,1+ 4) {};
  \fill[black] (411) circle (1pt);
  \node (511) at (0.25,1.5+ 4) {};
  \fill[black] (511) circle (2.5pt);
  \node (611) at (0.75,0.5+ 4) {};
  \fill[black] (611) circle (1pt);
  \node (711) at (0.75,1+ 4) {};
  \fill[black] (711) circle (1pt);
   %\node (8) at (0+ 4,1.5) {};
  %\fill[black] (8) circle (1pt);
  \node (911) at (1.25,0.5+ 4) {};
  \fill[black] (911) circle (2.5pt);
  %\node (10) at (0+ 4,2) {};
  %\fill[black] (10) circle (1pt);
 
  %\node (11) at (0+ 4,0) {};
  %\fill[black] (11) circle (1pt);
  \node (1211) at (-0.25,0.5+ 4) {};
  \fill[black] (1211) circle (1pt);
  \node (1311) at (-0.25,1+ 4) {};
 \fill[black] (1311) circle (1pt);
  \node (1411) at (-0.25,1.5+ 4) {};
  \fill[black] (1411) circle (1pt);
  %\node (26) at (0+ 4,-0.75) {};
 %\fill[black] (26) circle (1pt);
  \node (2711) at (-0.75,0.5+ 4) {};
  \fill[black] (2711) circle (1pt);
  \node (2811) at (-0.75,1+ 4) {};
  \fill[black] (2811) circle (1pt);
  \node (2911) at (-0.75,1.5+ 4) {};
  \fill[black] (2911) circle (1pt);
 %\node (2111) at (-0.75,2+ 4) {};
 %\fill[black] (2111) circle (1pt);
 %\node (2511) at (-0.75,2.5+ 4) {};

 %\node (4011) at (-0.75,2.15+ 4) {.};
%\node (4011x) at (-0.75,2.25+ 4) {.};
%\node (4011xx) at (-0.75,2.35+ 4) {.};

 %\fill[black] (2511) circle (1pt);
 %\node (3911) at (-0.250,2+ 4) {};
 %\fill[black] (3911) circle (1pt);
 \draw  (1311) -- (1411);
\draw[black]  (1211) -- (1311);
 \draw (2811) -- (2911);
  %\node (15) at (1.0+ 4,-0.37) {\vdots};
  %\node (16) at (0.8+ 4,-0.37) {\vdots};
     \node (1611) at (-0.5,0.5+ 4) {\textcolor{black}{...}};
    %\node (1611x) at (-0.5,2+ 4) {...};
    \node (1611xx) at (-0.5,1.5+ 4) {...};
    \node (1611xxx) at (-0.5,1+ 4) {\textcolor{black}{...}};
     %\node (16) at (0.4+ 4,-0.37) {\vdots};
     %\node (16) at (0.6+ 4,-0.37) {\vdots};
 %\node (16) at (0+ 4,-1.25) {};
  %\fill[black] (16) circle (1pt);
	
  \node (1711) at (-1.25,0.5+ 4) {};
  \fill[black] (1711) circle (1pt);
  \node (1811) at (-1.25,1+ 4) {};
  \fill[black] (1811) circle (1pt);
  \node (1911) at (-1.25,1.5+ 4) {};
  \fill[black] (1911) circle (1pt);
	
  %\node (2011) at (-1.25,2+ 4) {};
  %\fill[black] (2011) circle (1pt);
   
  %\node (2211) at (-1.25,2.15+ 4) {.};
    %\node (2211x) at (-1.25,2.25+ 4) {.};
    %\node (2211xx) at (-1.25,2.35+ 4) {.};
   
  %\node (2311) at (-1.25,2.5+ 4) {};
  %\fill[black] (2311) circle (1pt);
  %\node (2411) at (-1.25,3+ 4) {};
  %\fill[black] (2411) circle (1pt);
 
  %\node (30) at (0+ 4,-1.75) {};
  %\fill[black] (30) circle (1pt);
  \node (3111) at (-1.75,0.5+ 4) {};
  \fill[black] (3111) circle (1pt);
  \node (3211) at (-1.75,1+ 4) {};
  \fill[black] (3211) circle (1pt);
  \node (3311) at (-1.75,1.5+ 4) {};
  \fill[black] (3311) circle (1pt);
  %\node (3411) at (-1.75,2+ 4) {};
  %\fill[black] (3411) circle (1pt);
   
  %\node (3511) at (-1.75,2.15+ 4) {.};
    %\node (3511xx) at (-1.75,2.25+ 4) {.};
    %\node (3511xxx) at (-1.75,2.35+ 4) {.};
   
  %\node (3611) at (-1.75,2.5+ 4) {};
  %\fill[black] (3611) circle (1pt);
  %\node (3711) at (-1.75,3+ 4) {};
  %\fill[black] (3711) circle (1pt);
  %\node (3811) at (-1.75,3.5+ 4) {};
  %\fill[black] (3811) circle (1pt);
 
 \node (17112) at (-2.25,0.5+ 4) {};
  \fill[black] (17112) circle (1pt);
  \node (18112) at (-2.25,1+ 4) {};
  \fill[black] (18112) circle (1pt);
  \node (19112) at (-2.25,1.5+ 4) {};
  \fill[black] (19112) circle (1pt);
  %\node (20112) at (-2.25,2+ 4) {};
  %\fill[black] (20112) circle (1pt);
   
  %\node (22112x) at (-2.25,2.15+ 4) {.};
    %\node (22112xx) at (-2.25,2.25+ 4) {.};
    %\node (22112xxx) at (-2.25,2.35+ 4) {.};
   
  %\node (23112) at (-2.25,2.5+ 4) {};
  %\fill[black] (23112) circle (1pt);
  %\node (24112) at (-2.25,3+ 4) {};
  %\fill[black] (24112) circle (1pt);

\node (27112) at (-2.75,0.5+ 4) {};
  \fill[black] (27112) circle (1pt);
  \node (28112) at (-2.75,1+ 4) {};
  \fill[black] (28112) circle (1pt);
  \node (29112) at (-2.75,1.5+ 4) {};
  \fill[black] (29112) circle (1pt);
 %\node (21112) at (-2.75,2+ 4) {};
 %\fill[black] (21112) circle (1pt);
 %\node (25112) at (-2.75,2.5+ 4) {};
%\fill[black] (25112) circle (1pt);
%\node (40112) at (-2.75,2.15+ 4) {.};
%\node (40112x) at (-2.75,2.25+ 4) {.};
%\node (40112xx) at (-2.75,2.35+ 4) {.};

\node (3112) at (-3.75,0.5 + 4) {};
  \fill[black] (3112) circle (1pt);
  \node (4112) at (-3.75,1+ 4) {};
  \fill[black] (4112) circle (1pt);
  \node (5112) at (-3.75,1.5+ 4) {};
  \fill[black] (5112) circle (1pt);
  %\node (6112) at (-4.25,0.5+ 4) {};
  %\fill[black] (6112) circle (1pt);
  %\node (7112) at (-4.25,1+ 4) {};
  %\fill[black] (7112) circle (1pt);
   %\node (8) at (0+ 4,1.5) {};
  %\fill[black] (8) circle (1pt);
  %\node (9112) at (-4.25,0.5+ 4) {};
  %\fill[black] (9112) circle (1pt);
  %\node (10) at (0+ 4,2) {};
  %\fill[black] (10) circle (1pt);
 
  %\node (11) at (0+ 4,0) {};
  %\fill[black] (11) circle (1pt);
  \node (12112) at (-3.25,0.5+ 4) {};
  \fill[black] (12112) circle (1pt);
  \node (13112) at (-3.25,1+ 4) {};
 \fill[black] (13112) circle (1pt);
  \node (14112) at (-3.25,1.5+ 4) {};
  \fill[black] (14112) circle (1pt);

%\node (39112) at (-3.25,2+ 4) {};
 %\fill[black] (39112) circle (1pt);
 \draw  (13112) -- (14112);
\draw[black]  (12112) -- (13112);
 \draw (28112) -- (29112) ;
\draw[black] (27112) -- (28112);
  %\node (15) at (1.0+ 4,-0.37) {\vdots};
  %\node (16) at (0.8+ 4,-0.37) {\vdots};
   
    \node (16112) at (-3,1+ 4) {\textcolor{black}{...}};
     \node (16112x) at (-3,1.5+ 4) {...};
    %\node (16112xx) at (-3,2+ 4) {...};
    \node (16112xxx) at (-3,0.5+ 4) {\textcolor{black}{...}};
   
%\node (9112) at (-4.75,0.5+ 4) {};
 %\fill[black] (9112) circle (1pt);
  %  \fill[black] (-4.8,0.5+ 3.95) rectangle (-4.7,0.5+ 4.05);
   %\draw (1)  -- (2);
   %\draw (1)  -- (3);
   \draw[black] (311)  -- (411);
   \draw (411)  -- (511);
   %\draw (2)  -- (6);
   \draw[black] (611)  -- (711);
   \draw[black] (311)  -- (611);
   \draw[black] (411)  -- (711);
   %\draw (2)  -- (8);
   \draw[black] (611)  -- (911);
   %\draw (8)  -- (10);
   %\draw (8)  -- (9);
   %\draw (1)  -- (11) ;
   \draw[black] (311)  -- (1211) ;
   \draw[black] (411)  -- (1311) ;
   \draw (511)  -- (1411) ;
    %\draw  (26) -- (16);
   \draw[black] (2711) -- (1711);
   \draw[black] (2811) -- (1811);
   \draw  (2911) -- (1911);
   %\draw (2011) -- (2111);
   \draw[black] (1711) -- (1811);
    \draw[black] (2711) -- (2811);
    \draw (1811) -- (1911);
   %\draw (16) -- (30);
   \draw[black] (1711) -- (3111);
   \draw[black] (1811) -- (3211);
   \draw (1911) -- (3311);
   %\draw (2011) -- (3411);
   \draw[black] (3111) -- (3211);
  \draw (3211) -- (3311);
    %\draw (3611) -- (3711) -- (3811);
   %\draw (2311) -- (2511);
    %\draw (2311) -- (2411);
    %\draw (2311) -- (3611);
    %\draw (2411) -- (3711);
       
        %\draw (2711) -- (17112);
   %\draw (2811) -- (18112);
   %\draw  (2911) -- (19112);
   %\draw (20112) -- (21112);
   \draw (18112) -- (19112);
    \draw[black] (17112) -- (18112);
   %\draw (16) -- (30);
   \draw[black] (17112) -- (3111);
   \draw[black] (18112) -- (3211);
   \draw (19112) -- (3311);
   %\draw (20112) -- (3411);
   %\draw (3211) -- (3311) -- (3411);
   %\draw (3611) -- (3711) -- (3811);
   %\draw (23112) -- (2511);
    %\draw (23112) -- (24112);
    %\draw (23112) -- (3611);
    %\draw (24112) -- (3711);

         \draw[black] (3112)  -- (4112);
   \draw (4112)  -- (5112);
   %\draw (2)  -- (6);
   %\draw[black] (6112)  -- (7112);
   %\draw[black] (3112)  -- (6112);
   %\draw[black] (4112)  -- (7112);
   %\draw (2)  -- (8);
   %\draw[black] (6112)  -- (9112);
   %\draw (8)  -- (10);
   %\draw (8)  -- (9);
   %\draw (1)  -- (11) ;
   \draw[black] (3112)  -- (12112) ;
   \draw[black] (4112)  -- (13112) ;
   \draw (5112)  -- (14112) ;
    %\draw  (26) -- (16);
   \draw[black] (27112) -- (17112);
   \draw[black] (28112) -- (18112);
   \draw  (29112) -- (19112);
   %\draw (20112) -- (21112);
 
   %\draw (23112) -- (25112);
    %\draw (23112) -- (24112);

\end{tikzpicture}
\end{center}

Hence we have simplified the situation to the following
\begin{equation}\label{reichtl2}\sum_{\beta\in \mathbf p}(s_{\beta}-t_{\beta})\leq M_{\bp}((m-p)\omega_3),\ \forall \mathbf p\in \mathbf D_1^{\typ 1}\cup \widetilde{\mathbf D}_2^{\typ 1}\cup \mathbf D^{\typ 2}\backslash \mathbf q \ \Rightarrow \bs-\bt\in S(\mathbf D,(m-p)\omega_3),\end{equation} 
where $\widetilde{\mathbf D}_2^{\typ 1}=\{\bp\in \mathbf D_2^{\typ 1}\mid \al_{2,2n-3}\in \bp\}$. 
Let $\bs^{'}$ be the multi--exponent obtained from $\bs$ by setting all entries $s_{\beta}$ with $\beta\in R_3^+(2n-4)$ to zero and $\bt^{\bs^{'}}=(t^{'}_{\beta})$ be the multi--exponent associated to $\bs^{'}$. By Lemma~\ref{Anfolg} we obtain for all $\mathbf p\in \mathbf D_1^{\typ 1}$  
\begin{equation}\label{gghhtt}\sum_{\beta\in \mathbf p}(s_{\beta}-t^{'}_{\beta})\leq m-1.\end{equation}
Recall that $s_{3,j}\neq 0$ for some $3\leq j \leq 2n-3$ and hence $t^{'}_{3,k}\neq 0$ for some $3\leq k \leq 2n-4$. So we consider the following cases which can appear.\vspace{3pt}

\textbf{Case {2}:} Suppose that $\sum_{\beta} t^{'}_{\beta}=3$. In this case there exists $3\leq j_3<j_2<j_1\leq 2n-4$ such that $t_{1,j_1}=t_{2,j_2}=t_{3,j_3}=1$ (see the red squares below). 

\begin{center}
\begin{tikzpicture}

%% BN Teil
  \node (311) at (0.25,0.5 + 4) {};
  \fill[black] (311) circle (1pt);
  \node (411) at (0.25,1+ 4) {};
  \fill[black] (411) circle (1pt);
  \node (511) at (0.25,1.5+ 4) {};
  \fill[black] (511) circle (2.5pt);
  \node (611) at (0.75,0.5+ 4) {};
  \fill[black] (611) circle (1pt);
  \node (711) at (0.75,1+ 4) {};
  \fill[black] (711) circle (1pt);
   %\node (8) at (0+ 4,1.5) {};
  %\fill[black] (8) circle (1pt);
  \node (911) at (1.25,0.5+ 4) {};
  \fill[black] (911) circle (2.5pt);
  %\node (10) at (0+ 4,2) {};
  %\fill[black] (10) circle (1pt);
 
  %\node (11) at (0+ 4,0) {};
  %\fill[black] (11) circle (1pt);
  \node (1211) at (-0.25,0.5+ 4) {};
  \fill[black] (1211) circle (2.5pt);
  \node (1311) at (-0.25,1+ 4) {};
 \fill[black] (1311) circle (2.5pt);
  \node (1411) at (-0.25,1.5+ 4) {};
  \fill[black] (1411) circle (2.5pt);
  %\node (26) at (0+ 4,-0.75) {};
 %\fill[black] (26) circle (1pt);
  \node (2711) at (-0.75,0.5+ 4) {};
  \fill[black] (2711) circle (2.5pt);
  \node (2811) at (-0.75,1+ 4) {};
  \fill[black] (2811) circle (2.5pt);
  \node (2911) at (-0.75,1.5+ 4) {};
  \fill[black] (2911) circle (2.5pt);
 %\node (2111) at (-0.75,2+ 4) {};
 %\fill[black] (2111) circle (1pt);
 %\node (2511) at (-0.75,2.5+ 4) {};

 %\node (4011) at (-0.75,2.15+ 4) {.};
%\node (4011x) at (-0.75,2.25+ 4) {.};
%\node (4011xx) at (-0.75,2.35+ 4) {.};

 %\fill[black] (2511) circle (1pt);
 %\node (3911) at (-0.250,2+ 4) {};
 %\fill[black] (3911) circle (1pt);
 \draw  (1311) -- (1411);
\draw[black]  (1211) -- (1311);
 \draw (2811) -- (2911);
  %\node (15) at (1.0+ 4,-0.37) {\vdots};
  %\node (16) at (0.8+ 4,-0.37) {\vdots};
     \node (1611) at (-0.5,0.5+ 4) {\textcolor{black}{...}};
    %\node (1611x) at (-0.5,2+ 4) {...};
    \node (1611xx) at (-0.5,1.5+ 4) {...};
    \node (1611xxx) at (-0.5,1+ 4) {\textcolor{black}{...}};
     %\node (16) at (0.4+ 4,-0.37) {\vdots};
     %\node (16) at (0.6+ 4,-0.37) {\vdots};
 %\node (16) at (0+ 4,-1.25) {};
  %\fill[black] (16) circle (1pt);
	
  \node (1711) at (-1.25,0.5+ 4) {};
  \fill[red] (-1.34,4.41) rectangle (-1.16,4.59);
  \node (1811) at (-1.25,1+ 4) {};
  \fill[black] (1811) circle (2.5pt);
  \node (1911) at (-1.25,1.5+ 4) {};
  \fill[black] (1911) circle (2.5pt);
	
  %\node (2011) at (-1.25,2+ 4) {};
  %\fill[black] (2011) circle (1pt);
   
  %\node (2211) at (-1.25,2.15+ 4) {.};
    %\node (2211x) at (-1.25,2.25+ 4) {.};
    %\node (2211xx) at (-1.25,2.35+ 4) {.};
   
  %\node (2311) at (-1.25,2.5+ 4) {};
  %\fill[black] (2311) circle (1pt);
  %\node (2411) at (-1.25,3+ 4) {};
  %\fill[black] (2411) circle (1pt);
 
  %\node (30) at (0+ 4,-1.75) {};
  %\fill[black] (30) circle (1pt);
  \node (3111) at (-1.75,0.5+ 4) {};
  \fill[black] (3111) circle (1pt);
  \node (3211) at (-1.75,1+ 4) {};
  \fill[black] (3211) circle (2.5pt);
  \node (3311) at (-1.75,1.5+ 4) {};
  \fill[black] (3311) circle (2.5pt);
  %\node (3411) at (-1.75,2+ 4) {};
  %\fill[black] (3411) circle (1pt);
   
  %\node (3511) at (-1.75,2.15+ 4) {.};
    %\node (3511xx) at (-1.75,2.25+ 4) {.};
    %\node (3511xxx) at (-1.75,2.35+ 4) {.};
   
  %\node (3611) at (-1.75,2.5+ 4) {};
  %\fill[black] (3611) circle (1pt);
  %\node (3711) at (-1.75,3+ 4) {};
  %\fill[black] (3711) circle (1pt);
  %\node (3811) at (-1.75,3.5+ 4) {};
  %\fill[black] (3811) circle (1pt);
 
 \node (17112) at (-2.25,0.5+ 4) {};
  \fill[black] (17112) circle (1pt);
  \node (18112) at (-2.25,1+ 4) {};
  \fill[red] (-2.34,4.91) rectangle (-2.16,5.09);
  \node (19112) at (-2.25,1.5+ 4) {};
  \fill[black] (19112) circle (2.5pt);
  %\node (20112) at (-2.25,2+ 4) {};
  %\fill[black] (20112) circle (1pt);
   
  %\node (22112x) at (-2.25,2.15+ 4) {.};
    %\node (22112xx) at (-2.25,2.25+ 4) {.};
    %\node (22112xxx) at (-2.25,2.35+ 4) {.};
   
  %\node (23112) at (-2.25,2.5+ 4) {};
  %\fill[black] (23112) circle (1pt);
  %\node (24112) at (-2.25,3+ 4) {};
  %\fill[black] (24112) circle (1pt);

\node (27112) at (-2.75,0.5+ 4) {};
  \fill[black] (27112) circle (1pt);
  \node (28112) at (-2.75,1+ 4) {};
  \fill[black] (28112) circle (1pt);
  \node (29112) at (-2.75,1.5+ 4) {};
  \fill[black] (29112) circle (2.5pt);
 %\node (21112) at (-2.75,2+ 4) {};
 %\fill[black] (21112) circle (1pt);
 %\node (25112) at (-2.75,2.5+ 4) {};
%\fill[black] (25112) circle (1pt);
%\node (40112) at (-2.75,2.15+ 4) {.};
%\node (40112x) at (-2.75,2.25+ 4) {.};
%\node (40112xx) at (-2.75,2.35+ 4) {.};

\node (3112) at (-3.75,0.5 + 4) {};
  \fill[black] (3112) circle (1pt);
  \node (4112) at (-3.75,1+ 4) {};
  \fill[black] (4112) circle (1pt);
  \node (5112) at (-3.75,1.5+ 4) {};
  \fill[black] (5112) circle (1pt);
  %\node (6112) at (-4.25,0.5+ 4) {};
  %\fill[black] (6112) circle (1pt);
  %\node (7112) at (-4.25,1+ 4) {};
  %\fill[black] (7112) circle (1pt);
   %\node (8) at (0+ 4,1.5) {};
  %\fill[black] (8) circle (1pt);
  %\node (9112) at (-4.25,0.5+ 4) {};
  %\fill[black] (9112) circle (1pt);
  %\node (10) at (0+ 4,2) {};
  %\fill[black] (10) circle (1pt);
 
  %\node (11) at (0+ 4,0) {};
  %\fill[black] (11) circle (1pt);
  \node (12112) at (-3.25,0.5+ 4) {};
  \fill[black] (12112) circle (1pt);
  \node (13112) at (-3.25,1+ 4) {};
 \fill[black] (13112) circle (1pt);
  \node (14112) at (-3.25,1.5+ 4) {};
  \fill[red] (-3.34,5.41) rectangle (-3.16,5.59);

%\node (39112) at (-3.25,2+ 4) {};
 %\fill[black] (39112) circle (1pt);
 \draw  (13112) -- (14112);
\draw[black]  (12112) -- (13112);
 \draw (28112) -- (29112) ;
\draw[black] (27112) -- (28112);
  %\node (15) at (1.0+ 4,-0.37) {\vdots};
  %\node (16) at (0.8+ 4,-0.37) {\vdots};
   
    \node (16112) at (-3,1+ 4) {\textcolor{black}{...}};
     \node (16112x) at (-3,1.5+ 4) {...};
    %\node (16112xx) at (-3,2+ 4) {...};
    \node (16112xxx) at (-3,0.5+ 4) {\textcolor{black}{...}};
   
%\node (9112) at (-4.75,0.5+ 4) {};
 %\fill[black] (9112) circle (1pt);
  %  \fill[black] (-4.8,0.5+ 3.95) rectangle (-4.7,0.5+ 4.05);
   %\draw (1)  -- (2);
   %\draw (1)  -- (3);
   \draw[black] (311)  -- (411);
   \draw (411)  -- (511);
   %\draw (2)  -- (6);
   \draw[black] (611)  -- (711);
   \draw[black] (311)  -- (611);
   \draw[black] (411)  -- (711);
   %\draw (2)  -- (8);
   \draw[black] (611)  -- (911);
   %\draw (8)  -- (10);
   %\draw (8)  -- (9);
   %\draw (1)  -- (11) ;
   \draw[black] (311)  -- (1211) ;
   \draw[black] (411)  -- (1311) ;
   \draw (511)  -- (1411) ;
    %\draw  (26) -- (16);
   \draw[black] (2711) -- (1711);
   \draw[black] (2811) -- (1811);
   \draw  (2911) -- (1911);
   %\draw (2011) -- (2111);
   \draw[black] (1711) -- (1811);
    \draw[black] (2711) -- (2811);
    \draw (1811) -- (1911);
   %\draw (16) -- (30);
   \draw[black] (1711) -- (3111);
   \draw[black] (1811) -- (3211);
   \draw (1911) -- (3311);
   %\draw (2011) -- (3411);
   \draw[black] (3111) -- (3211);
  \draw (3211) -- (3311);
    %\draw (3611) -- (3711) -- (3811);
   %\draw (2311) -- (2511);
    %\draw (2311) -- (2411);
    %\draw (2311) -- (3611);
    %\draw (2411) -- (3711);
       
        %\draw (2711) -- (17112);
   %\draw (2811) -- (18112);
   %\draw  (2911) -- (19112);
   %\draw (20112) -- (21112);
   \draw (18112) -- (19112);
    \draw[black] (17112) -- (18112);
   %\draw (16) -- (30);
   \draw[black] (17112) -- (3111);
   \draw[black] (18112) -- (3211);
   \draw (19112) -- (3311);
   %\draw (20112) -- (3411);
   %\draw (3211) -- (3311) -- (3411);
   %\draw (3611) -- (3711) -- (3811);
   %\draw (23112) -- (2511);
    %\draw (23112) -- (24112);
    %\draw (23112) -- (3611);
    %\draw (24112) -- (3711);

         \draw[black] (3112)  -- (4112);
   \draw (4112)  -- (5112);
   %\draw (2)  -- (6);
   %\draw[black] (6112)  -- (7112);
   %\draw[black] (3112)  -- (6112);
   %\draw[black] (4112)  -- (7112);
   %\draw (2)  -- (8);
   %\draw[black] (6112)  -- (9112);
   %\draw (8)  -- (10);
   %\draw (8)  -- (9);
   %\draw (1)  -- (11) ;
   \draw[black] (3112)  -- (12112) ;
   \draw[black] (4112)  -- (13112) ;
   \draw (5112)  -- (14112) ;
    %\draw  (26) -- (16);
   \draw[black] (27112) -- (17112);
   \draw[black] (28112) -- (18112);
   \draw  (29112) -- (19112);
   %\draw (20112) -- (21112);
 
   %\draw (23112) -- (25112);
    %\draw (23112) -- (24112);

\end{tikzpicture}
\end{center}

Let $\bp\in \widetilde{\mathbf D}_2^{\typ 1}$ of the following form
$$\bp=\{\al_{1,4},\dots,\al_{1,p},\al_{2,p},\dots,\al_{2,2n-3},\al_{3,2n-3}\}.$$
We suppose that $j_1>p>j_2$, because otherwise there is nothing to show. This yields $s_{2,p}=\cdots=s_{2,2n-4}=0$ and hence
$$\sum_{\beta\in\bp}(s_{\beta}-t^{'}_{\beta})\leq (s_{1,4}-t^{'}_{1,4})+\cdots+(s_{1,2n-3}-t^{'}_{1,2n-3})+(s_{2,2n-3}-t^{'}_{2,2n-3})\leq m-1.$$
Similar arguments show $$\sum_{\beta\in \mathbf p}(s_{\beta}-t^{'}_{\beta})\leq 2(m-1),\ \mbox{ for all $\bp\in \mathbf D^{\typ 2}\backslash \mathbf q $}.$$ Hence \eqref{reichtl2} and \eqref{gghhtt} together imply 
$$\bs-\bt^{\bs^{'}}\in S(\mathbf D,(m-1)\omega_3).$$
\textbf{Case {3}:} In this case we suppose $\sum_{\beta} t^{'}_{\beta}=1$. The proof proceeds similarly to the proof of Case 2 and will be omitted. 

\textbf{Case {4}:} In this case we suppose that $\sum_{\beta} t^{'}_{\beta}=2$. \vspace{3pt}

Here we have again two cases, namely either there exists $3\leq j_3<j_1 \leq 2n-4$ such that $t^{'}_{1,j_1}=t^{'}_{3,j_3}=1$ or there exists $3\leq j_3<j_2 \leq 2n-4$ such that $t^{'}_{2,j_2}=t^{'}_{3,j_3}=1$. The latter case works similarly and will be omitted.

\textbf{Case {4.1}:} Suppose there exists $3\leq j_3<j_1 \leq 2n-4$ such that $t^{'}_{1,j_1}=t^{'}_{3,j_3}=1$. \vspace{3pt}

\begin{center}
\begin{tikzpicture}

  \node (311) at (0.25,0.5 + 4) {};
  \fill[black] (311) circle (1pt);
  \node (411) at (0.25,1+ 4) {};
  \fill[black] (411) circle (1pt);
  \node (511) at (0.25,1.5+ 4) {};
  \fill[black] (511) circle (2.5pt);
  \node (611) at (0.75,0.5+ 4) {};
  \fill[black] (611) circle (1pt);
  \node (711) at (0.75,1+ 4) {};
  \fill[black] (711) circle (1pt);

  \node (911) at (1.25,0.5+ 4) {};
  \fill[black] (911) circle (2.5pt);

  \node (1211) at (-0.25,0.5+ 4) {};
  \fill[black] (1211) circle (2.5pt);
  \node (1311) at (-0.25,1+ 4) {};
 \fill[black] (1311) circle (2.5pt);
  \node (1411) at (-0.25,1.5+ 4) {};
  \fill[black] (1411) circle (2.5pt);
 
  \node (2711) at (-0.75,0.5+ 4) {};
  \fill[black] (2711) circle (2.5pt);
  \node (2811) at (-0.75,1+ 4) {};
  \fill[black] (2811) circle (2.5pt);
  \node (2911) at (-0.75,1.5+ 4) {};
  \fill[black] (2911) circle (2.5pt);

 \draw  (1311) -- (1411);
\draw[black]  (1211) -- (1311);
 \draw (2811) -- (2911);

     \node (1611) at (-0.5,0.5+ 4) {\textcolor{black}{...}};

    \node (1611xx) at (-0.5,1.5+ 4) {...};
    \node (1611xxx) at (-0.5,1+ 4) {\textcolor{black}{...}};

  \node (1711) at (-1.25,0.5+ 4) {};
  \fill[red] (-1.34,4.41) rectangle (-1.16,4.59);
  \node (1811) at (-1.25,1+ 4) {};
  \fill[black] (1811) circle (2.5pt);
  \node (1911) at (-1.25,1.5+ 4) {};
  \fill[black] (1911) circle (2.5pt);

  \node (3111) at (-1.75,0.5+ 4) {};
  \fill[black] (3111) circle (1pt);
  \node (3211) at (-1.75,1+ 4) {};
  \fill[black] (3211) circle (2.5pt);
  \node (3311) at (-1.75,1.5+ 4) {};
  \fill[black] (3311) circle (2.5pt);

 \node (17112) at (-2.25,0.5+ 4) {};
  \fill[black] (17112) circle (1pt);
  \node (18112) at (-2.25,1+ 4) {};
  \fill[black] (18112) circle (1pt);
  \node (19112) at (-2.25,1.5+ 4) {};
  \fill[red] (-2.34,5.41) rectangle (-2.16,5.59);

\node (27112) at (-2.75,0.5+ 4) {};
  \fill[black] (27112) circle (1pt);
  \node (28112) at (-2.75,1+ 4) {};
  \fill[black] (28112) circle (1pt);
  \node (29112) at (-2.75,1.5+ 4) {};
  \fill[black] (29112) circle (1pt);

\node (3112) at (-3.75,0.5 + 4) {};
  \fill[black] (3112) circle (1pt);
  \node (4112) at (-3.75,1+ 4) {};
  \fill[black] (4112) circle (1pt);
  \node (5112) at (-3.75,1.5+ 4) {};
  \fill[black] (5112) circle (1pt);

  \node (12112) at (-3.25,0.5+ 4) {};
  \fill[black] (12112) circle (1pt);
  \node (13112) at (-3.25,1+ 4) {};
 \fill[black] (13112) circle (1pt);
  \node (14112) at (-3.25,1.5+ 4) {};
  \fill[black] (14112) circle (1pt);

 \draw  (13112) -- (14112);
\draw[black]  (12112) -- (13112);
 \draw (28112) -- (29112) ;
\draw[black] (27112) -- (28112);

    \node (16112) at (-3,1+ 4) {\textcolor{black}{...}};
     \node (16112x) at (-3,1.5+ 4) {...};
    
    \node (16112xxx) at (-3,0.5+ 4) {\textcolor{black}{...}};

   \draw[black] (311)  -- (411);
   \draw (411)  -- (511);

   \draw[black] (611)  -- (711);
   \draw[black] (311)  -- (611);
   \draw[black] (411)  -- (711);

   \draw[black] (611)  -- (911);

   \draw[black] (311)  -- (1211) ;
   \draw[black] (411)  -- (1311) ;
   \draw (511)  -- (1411) ;

   \draw[black] (2711) -- (1711);
   \draw[black] (2811) -- (1811);
   \draw  (2911) -- (1911);

   \draw[black] (1711) -- (1811);
    \draw[black] (2711) -- (2811);
    \draw (1811) -- (1911);

   \draw[black] (1711) -- (3111);
   \draw[black] (1811) -- (3211);
   \draw (1911) -- (3311);

   \draw[black] (3111) -- (3211);
  \draw (3211) -- (3311);

   \draw (18112) -- (19112);
    \draw[black] (17112) -- (18112);

   \draw[black] (17112) -- (3111);
   \draw[black] (18112) -- (3211);
   \draw (19112) -- (3311);

         \draw[black] (3112)  -- (4112);
   \draw (4112)  -- (5112);

   %\draw[black] (6112)  -- (9112);
  
   \draw[black] (3112)  -- (12112) ;
   \draw[black] (4112)  -- (13112) ;
   \draw (5112)  -- (14112) ;

   \draw[black] (27112) -- (17112);
   \draw[black] (28112) -- (18112);
   \draw  (29112) -- (19112);

\end{tikzpicture}
\end{center}
This case can be divided again into two further cases. One case treats $\sum^{2n-4}_{k=3}s_{1,k}=m$ and the other case $\sum^{2n-4}_{k=3}s_{1,k}\leq m-1$. In the latter case we can construct a multi--exponent $\bt\in S(\mathbf D, \omega_3)$ similarly as in Case 2 such that $\bs-\bt\in S(\mathbf D,(m-1) \omega_3)$. The details will be omitted.
%\textbf{Case {4.1.1}:} Suppose first that $\sum^{2n-4}_{k=3}s_{1,k}\leq m-1$. If in addition $s_{2,2n-3}\neq 0$, we set $\bt\in\mathbf T(1)$ to be the multi--exponent with 
%$$\supp(\bt)=\begin{cases}\{\al_{3,j_3},\al_{2,2n-3}\},&\text{ if $s_{1,2n-2}=0$,}\\
 %\{\al_{3,j_3},\al_{2,2n-3},\al_{1,2n-2}\},&\text{ otherwise.}
%\end{cases}$$
%So suppose that $s_{2,2n-3}=0$. If $\sum^{2n-3}_{k=4}s_{1,k}=m$ (this forces $s_{1,2n-3}\neq 0$) we set $\bt\in\mathbf T(1)$ to be the multi--exponent with $\supp(\bt)=\{\al_{3,j_3},\al_{1,2n-3}\}$ and otherwise we set $\bt\in\mathbf T(1)$ to be the multi--exponent with 
%$$\supp(\bt)=\begin{cases}\{\al_{3,j_3}\},&\text{ if $s_{1,2n-2}=s_{2,2n-2}=0$,}\\
 %\{\al_{3,j_3},\al_{k,2n-2}\},&\text{ otherwise.}
%\end{cases}$$
 %where $k=\max\{1\leq j\leq 2\mid s_{j,2n-2}\neq 0\}$.
%In all cases it is an easy calculation to show that $\bt \in S(\mathbf D,\omega_3)$ and $\bs-\bt\in S(\mathbf D,(m-1)\omega_3)$. \vspace{3pt}

\textbf{Case {4.1.1}:}
We suppose that $\sum^{2n-4}_{k=3}s_{1,k}= m$. If $s_{2,2n-3}=0$, we set $\bt\in\mathbf T(1)$ to be the multi--exponent with $\supp(\bt)=\{\al_{3,j_3},\al_{1,j_1}\}$. Then the statement can be easily deduced. So suppose from now on that $s_{2,2n-3}\neq 0$. 
This forces also that $s_{1,3}\neq 0$, because otherwise 
$$\sum^{2n-4}_{k=4}s_{1,k}+s_{2,2n-3}=m+s_{2,2n-3}>m.$$
\begin{center}
\begin{tikzpicture}

  \node (311) at (0.25,0.5 + 4) {};
  \fill[black] (311) circle (1pt);
  \node (411) at (0.25,1+ 4) {};
  \fill[black] (0.16,4.91) rectangle (0.34,5.09);
  \node (511) at (0.25,1.5+ 4) {};
  \fill[black] (511) circle (2.5pt);
  \node (611) at (0.75,0.5+ 4) {};
  \fill[black] (0.75,0.5+ 4) circle (1pt);
  \node (711) at (0.75,1+ 4) {};
  \fill[black] (711) circle (1pt);

  \node (911) at (1.25,0.5+ 4) {};
  \fill[black] (911) circle (2.5pt);

  \node (1211) at (-0.25,0.5+ 4) {};
  \fill[black] (1211) circle (2.5pt);
  \node (1311) at (-0.25,1+ 4) {};
 \fill[black] (1311) circle (2.5pt);
  \node (1411) at (-0.25,1.5+ 4) {};
  \fill[black] (1411) circle (2.5pt);
 
  \node (2711) at (-0.75,0.5+ 4) {};
  \fill[black] (2711) circle (2.5pt);
  \node (2811) at (-0.75,1+ 4) {};
  \fill[black] (2811) circle (2.5pt);
  \node (2911) at (-0.75,1.5+ 4) {};
  \fill[black] (2911) circle (2.5pt);

 \draw  (1311) -- (1411);
\draw[black]  (1211) -- (1311);
 \draw (2811) -- (2911);

     \node (1611) at (-0.5,0.5+ 4) {\textcolor{black}{...}};

    \node (1611xx) at (-0.5,1.5+ 4) {...};
    \node (1611xxx) at (-0.5,1+ 4) {\textcolor{black}{...}};

  \node (1711) at (-1.25,0.5+ 4) {};
  \fill[red] (-1.34,4.41) rectangle (-1.16,4.59);
  \node (1811) at (-1.25,1+ 4) {};
  \fill[black] (1811) circle (2.5pt);
  \node (1911) at (-1.25,1.5+ 4) {};
  \fill[black] (1911) circle (2.5pt);

  \node (3111) at (-1.75,0.5+ 4) {};
  \fill[black] (3111) circle (1pt);
  \node (3211) at (-1.75,1+ 4) {};
  \fill[black] (3211) circle (2.5pt);
  \node (3311) at (-1.75,1.5+ 4) {};
  \fill[black] (3311) circle (2.5pt);

 \node (17112) at (-2.25,0.5+ 4) {};
  \fill[black] (17112) circle (1pt);
  \node (18112) at (-2.25,1+ 4) {};
  \fill[black] (18112) circle (1pt);
  \node (19112) at (-2.25,1.5+ 4) {};
  \fill[red] (-2.34,5.41) rectangle (-2.16,5.59);

\node (27112) at (-2.75,0.5+ 4) {};
  \fill[black] (27112) circle (1pt);
  \node (28112) at (-2.75,1+ 4) {};
  \fill[black] (28112) circle (1pt);
  \node (29112) at (-2.75,1.5+ 4) {};
  \fill[black] (29112) circle (1pt);

\node (3112) at (-3.75,0.5 + 4) {};
  \fill[black] (-3.84,4.41) rectangle (-3.66,4.59);
  \node (4112) at (-3.75,1+ 4) {};
  \fill[black] (4112) circle (1pt);
  \node (5112) at (-3.75,1.5+ 4) {};
  \fill[black] (5112) circle (1pt);

  \node (12112) at (-3.25,0.5+ 4) {};
  \fill[black] (12112) circle (1pt);
  \node (13112) at (-3.25,1+ 4) {};
 \fill[black] (13112) circle (1pt);
  \node (14112) at (-3.25,1.5+ 4) {};
  \fill[black] (14112) circle (1pt);

 \draw  (13112) -- (14112);
\draw[black]  (12112) -- (13112);
 \draw (28112) -- (29112) ;
\draw[black] (27112) -- (28112);

    \node (16112) at (-3,1+ 4) {\textcolor{black}{...}};
     \node (16112x) at (-3,1.5+ 4) {...};
    
    \node (16112xxx) at (-3,0.5+ 4) {\textcolor{black}{...}};

   \draw[black] (311)  -- (411);
   \draw (411)  -- (511);

   \draw[black] (611)  -- (711);
   \draw[black] (311)  -- (611);
   \draw[black] (411)  -- (711);

   \draw[black] (611)  -- (911);

   \draw[black] (311)  -- (1211) ;
   \draw[black] (411)  -- (1311) ;
   \draw (511)  -- (1411) ;

   \draw[black] (2711) -- (1711);
   \draw[black] (2811) -- (1811);
   \draw  (2911) -- (1911);

   \draw[black] (1711) -- (1811);
    \draw[black] (2711) -- (2811);
    \draw (1811) -- (1911);

   \draw[black] (1711) -- (3111);
   \draw[black] (1811) -- (3211);
   \draw (1911) -- (3311);

   \draw[black] (3111) -- (3211);

  \draw (3211) -- (3311);

   \draw (18112) -- (19112);
    \draw[black] (17112) -- (18112);

   \draw[black] (17112) -- (3111);
   \draw[black] (18112) -- (3211);
   \draw (19112) -- (3311);

         \draw[black] (3112)  -- (4112);

   \draw (4112)  -- (5112);
%
   %\draw[black] (6112)  -- (9112);
  %
   \draw[black] (3112)  -- (12112) ;
   \draw[black] (4112)  -- (13112) ;
	
   \draw (5112)  -- (14112) ;

   \draw[black] (27112) -- (17112);
   \draw[black] (28112) -- (18112);
   \draw  (29112) -- (19112);

\end{tikzpicture}
\end{center}
If in addition $s_{1,2n-2}=0$, then we can define $\bt\in\mathbf T(1)$ to be the multi--exponent with $\supp(\bt)=\{\al_{2,2n-3},\al_{1,3}\}$ and the statement follows easily. So we can assume that $s_{1,2n-2}$ is also non--zero. 

\begin{center}
\begin{tikzpicture}

  \node (311) at (0.25,0.5 + 4) {};
  \fill[black] (311) circle (1pt);
  \node (411) at (0.25,1+ 4) {};
  \fill[black] (0.16,4.91) rectangle (0.34,5.09);
  \node (511) at (0.25,1.5+ 4) {};
  \fill[black] (511) circle (2.5pt);
  \node (611) at (0.75,0.5+ 4) {};
  \fill[black] (0.66,4.41) rectangle (0.84,4.59);
  \node (711) at (0.75,1+ 4) {};
  \fill[black] (711) circle (1pt);

  \node (911) at (1.25,0.5+ 4) {};
  \fill[black] (911) circle (2.5pt);

  \node (1211) at (-0.25,0.5+ 4) {};
  \fill[black] (1211) circle (2.5pt);
  \node (1311) at (-0.25,1+ 4) {};
 \fill[black] (1311) circle (2.5pt);
  \node (1411) at (-0.25,1.5+ 4) {};
  \fill[black] (1411) circle (2.5pt);
 
  \node (2711) at (-0.75,0.5+ 4) {};
  \fill[black] (2711) circle (2.5pt);
  \node (2811) at (-0.75,1+ 4) {};
  \fill[black] (2811) circle (2.5pt);
  \node (2911) at (-0.75,1.5+ 4) {};
  \fill[black] (2911) circle (2.5pt);

 \draw  (1311) -- (1411);
\draw[black]  (1211) -- (1311);
 \draw (2811) -- (2911);

     \node (1611) at (-0.5,0.5+ 4) {\textcolor{black}{...}};

    \node (1611xx) at (-0.5,1.5+ 4) {...};
    \node (1611xxx) at (-0.5,1+ 4) {\textcolor{black}{...}};

  \node (1711) at (-1.25,0.5+ 4) {};
  \fill[red] (-1.34,4.41) rectangle (-1.16,4.59);
  \node (1811) at (-1.25,1+ 4) {};
  \fill[black] (1811) circle (2.5pt);
  \node (1911) at (-1.25,1.5+ 4) {};
  \fill[black] (1911) circle (2.5pt);

  \node (3111) at (-1.75,0.5+ 4) {};
  \fill[black] (3111) circle (1pt);
  \node (3211) at (-1.75,1+ 4) {};
  \fill[black] (3211) circle (2.5pt);
  \node (3311) at (-1.75,1.5+ 4) {};
  \fill[black] (3311) circle (2.5pt);

 \node (17112) at (-2.25,0.5+ 4) {};
  \fill[black] (17112) circle (1pt);
  \node (18112) at (-2.25,1+ 4) {};
  \fill[black] (18112) circle (1pt);
  \node (19112) at (-2.25,1.5+ 4) {};
  \fill[red] (-2.34,5.41) rectangle (-2.16,5.59);

\node (27112) at (-2.75,0.5+ 4) {};
  \fill[black] (27112) circle (1pt);
  \node (28112) at (-2.75,1+ 4) {};
  \fill[black] (28112) circle (1pt);
  \node (29112) at (-2.75,1.5+ 4) {};
  \fill[black] (29112) circle (1pt);

\node (3112) at (-3.75,0.5 + 4) {};
  \fill[black] (-3.84,4.41) rectangle (-3.66,4.59);
  \node (4112) at (-3.75,1+ 4) {};
  \fill[black] (4112) circle (1pt);
  \node (5112) at (-3.75,1.5+ 4) {};
  \fill[black] (5112) circle (1pt);

  \node (12112) at (-3.25,0.5+ 4) {};
  \fill[black] (12112) circle (1pt);
  \node (13112) at (-3.25,1+ 4) {};
 \fill[black] (13112) circle (1pt);
  \node (14112) at (-3.25,1.5+ 4) {};
  \fill[black] (14112) circle (1pt);

 \draw  (13112) -- (14112);
\draw[black]  (12112) -- (13112);
 \draw (28112) -- (29112) ;
\draw[black] (27112) -- (28112);

    \node (16112) at (-3,1+ 4) {\textcolor{black}{...}};
     \node (16112x) at (-3,1.5+ 4) {...};
    
    \node (16112xxx) at (-3,0.5+ 4) {\textcolor{black}{...}};

   \draw[black] (311)  -- (411);
   \draw (411)  -- (511);

   \draw[black] (611)  -- (711);
   \draw[black] (311)  -- (611);
   \draw[black] (411)  -- (711);

   \draw[black] (611)  -- (911);

   \draw[black] (311)  -- (1211) ;
   \draw[black] (411)  -- (1311) ;
   \draw (511)  -- (1411) ;

   \draw[black] (2711) -- (1711);
   \draw[black] (2811) -- (1811);
   \draw  (2911) -- (1911);

   \draw[black] (1711) -- (1811);
    \draw[black] (2711) -- (2811);
    \draw (1811) -- (1911);

   \draw[black] (1711) -- (3111);
   \draw[black] (1811) -- (3211);
   \draw (1911) -- (3311);

   \draw[black] (3111) -- (3211);
  \draw (3211) -- (3311);

   \draw (18112) -- (19112);
    \draw[black] (17112) -- (18112);

   \draw[black] (17112) -- (3111);
   \draw[black] (18112) -- (3211);
   \draw (19112) -- (3311);

         \draw[black] (3112)  -- (4112);
   \draw (4112)  -- (5112);

   %\draw[black] (6112)  -- (9112);
  
   \draw[black] (3112)  -- (12112) ;
   \draw[black] (4112)  -- (13112) ;
   \draw (5112)  -- (14112) ;

   \draw[black] (27112) -- (17112);
   \draw[black] (28112) -- (18112);
   \draw  (29112) -- (19112);

\end{tikzpicture}
\end{center}

This is the only case where there is no multi--exponent $\bt\in S(\mathbf D,\omega_3)$ such that $\bs-\bt\in S(\mathbf D,(m-1)\omega_3)$. We shall define a multi--exponent $\bt \in S(\mathbf D,2\omega_3)$ such that $\bs-\bt\in S(\mathbf D,(m-2)\omega_3)$. Let $\bt$ be the multi--exponent with $\supp(\bt)=\{\al_{3,j_3},\al_{1,j_1},\al_{1,3},\al_{1,2n-2},\al_{2,2n-3}\}$. Obviously we have $\bt \in S(\mathbf D,2\omega_3)$. If $\mathbf p\in \mathbf D_1^{\typ 1}$, then we can also deduce immediately
$$\sum_{\beta\in \mathbf p}(s_{\beta}-t_{\beta})\leq m-2.$$
So let $\mathbf p\in \mathbf D_2^{\typ 1}$. There is only something to prove if $\mathbf p$ is of the following form 
$$\mathbf p=\{\al_{1,4},\dots,\al_{1,p},\al_{2,p},\dots,\al_{2,2n-3},\al_{3,2n-3}\},\ \mbox{ for some $p\leq j_3$.}$$
Since 
$$s_{1,3}+\cdots+s_{1,p}+s_{2,p}+\cdots+s_{2,j_3}\leq m-s_{3,j_3}\leq m-1<\sum^{2n-4}_{k=3}s_{1,k}$$
we obtain by subtracting $s_{1,3}$ on both sides
$$s_{1,4}+\cdots+s_{1,p}+s_{2,p}+\cdots+s_{2,j_3}<s_{1,4}+\cdots+s_{1,2n-4}.$$
Therefore 
$$\sum_{\beta\in \mathbf p}(s_{\beta}-t_{\beta})\leq \sum^{2n-3}_{k=4}(s_{1,k}-t_{1,k})+(s_{2,2n-3}-t_{2,2n-3})=\sum^{2n-3}_{k=4}s_{1,k}+s_{2,2n-3}-2\leq m-2.$$
Let $\mathbf p=\mathbf p_1\cup \mathbf p_2\in \mathbf D^{\typ 2}$ be a type 2 Dyck path. There is only something to show if $\mathbf p_1$ is of the form 
$$\mathbf p_1=\{\al_{1,3},\dots,\al_{1,p},\al_{2,p},\dots,\al_{2,2n-2}\},\ \mbox{ for some where $p\leq j_3$.}$$
 We get similar as above
$$\sum_{\beta \in \mathbf p}(s_{\beta}-t_{\beta})\leq \sum^{2n-3}_{k=3}(s_{1,k}-t_{1,k})+(s_{2,2n-3}-t_{2,2n-3})+(s_{2,2n-2}-t_{2,2n-2})+\sum_{\beta \in \mathbf p_2}(s_{\beta}-t_{\beta})\leq 2(m-2).$$ 
$\hfill\Box$
\subsection{}\label{code}
%We used Java and the following code:

\vspace{0,5cm}
We used the program Eclipse and the following code:

$public$ $class$ $B3\{$\\
$static$ $int$ $dim = 0;$\\
$public$ $static$ $void$ $main$($String$[] $args$)$\{$\\

$int$ $m1,m2,m3=0;$\\
                    $for(m1 = 0; m1 <= 9; m1\!+\!+)\{$\\
                        $for(m2 = 0; m2 <= 9; m2\!+\!+)\{$\\
                            $for(m3 = 0; m3 <= 9; m3\!+\!+)\{$\\
                                $if(m1+m2+m3 <= 9)\{$\\
                                $int$ $s1,s2,s3,s4,s5,s6,s7,s8,s9 = 0;$\\
    $for(s9 = 0 ; s9 <= m1 ; s9\!+\!+)\{$\\
                    $for(s8 = 0; s8 <= m3 ; s8\!+\!+)\{$\\
                    $for(s7 = 0; s7 <= m2 ; s7\!+\!+)\{$\\
                    $for(s6 = 0; s6 <= m1 + m2 ; s6\!+\!+)\{$\\
                    $for(s5 = 0; s5 <= 2$*$m2 + m3 ; s5\!+\!+)\{$\\
                    $for(s4 = 0; s4 <= 2$*$m1 + 2$*$m2 + m3 ; s4\!+\!+)\{$\\
                    $for(s3 = 0; s3 <= m2 + m3 ; s3\!+\!+)\{$\\
                    $for(s2 = 0; s2 <= m1 + m2 + m3 ; s2\!+\!+)\{$\\
                    $for(s1 = 0; s1 <= m1 + m2 + m2 + m3  ; s1\!+\!+)\{$\\
                   
                    $if(s2 + s3 + s4  + s8 + s9      <= m1 + m2 + m3)\{$\\
                        $if(s3 + s4 + s5     + s8 + s9      <= m1 + m2 + m3)\{$\\
                        $if(s4 + s5 + s6   + s8 + s9     <= m1 + m2 + m3)\{$\\   
                        $if(s5 + s6 + s7 + s8 + s9      <= m1 + m2 + m3)\{$\\
                       
                        $if(s3 +     s5         + s8           <=      m2 + m3)\{$\\
                        $if(s5 +     s7 + s8           <=      m2 + m3)\{$\\
                        $if(s6 + s7 +     s9      <=      m1 + m2)\{$\\
                           
                        $if(s1 + s2 + s3 + s4 + s5 +   s7 +     s9      <= m1 + 2$*$m2 + m3)\{$\\   
                    $    if(s1 +s3 + s4 + s5 + s6 + s7 +     s9      <= m1 + 2$*$m2 + m3)\{$\\
                        $if(s2 + s3 + s4 + s5 +   s7 + s8 + s9      <= m1 + 2$*$m2 + m3)\{$\\
                        $if(s3 + s4 + s5 + s6 + s7 + s8 + s9      <= m1 + 2$*$m2 + m3)\{$\\
                           
                    $    if(s1 + s2 + s3 + s4 + s5 + s6 + s7 +    2$*$s9      <= 2$*$m1 + 2$*$m2 + m3)\{$\\   
                    $    if(s2 + s3 + s4 + s5 + s6 + s7 + s8 + 2$*$s9      <= 2$*$m1 + 2$*$m2 + m3)\{$\\
                           
                        $if(s1 + s2 + 2$*$s3 + 2$*$s4 + 2$*$s5 + s6 + s7 + s8   + 2$*$s9   <= 2$*$m1 + 3$*$m2 + 2$*$m3)\{$\\
                    $    if(s2 + 2$*$s3 + 2$*$s4 + 2$*$s5 + s6 + s7 + 2$*$s8 + 2$*$s9   <= 2$*$m1 + 3$*$m2 + 2$*$m3)\{$\\
                    $    if(s3 +   s4 + 2$*$s5 + s6 + s7 + 2$*$s8 +   s9   <=  m1 + 2$*$m2 + 2$*$m3)\{$\\
                            $    dim\!+\!+;\}\}\}\}\}\}\}\}\}\}\}\}\}\}\}\}\}\}\}\}\}\}\}\}\}$\\
                        $System.out.println("\mid S("+m1+"w1+"+m2+"w2+"+m3+"w3)\mid="+dim);$\\
                        $dim=0;$\}\}\}\}\}\}

\vspace{0,5cm}

% References
%%%%%%%%%%%%%%%%%%%%%%%%%%%%%%%%%%%%%%%%%%%%%%%%%%%%%%%%%%%%%%%%%%%
\bibliographystyle{plain}
\bibliography{kus-pbw-biblist}
\end{document}